\documentclass[12pt,twoside]{amsart}
\usepackage{amssymb,amsmath,amsthm, amscd, enumerate, mathrsfs}
\usepackage{graphicx, hhline}
\usepackage[all]{xy}
\usepackage{braket}
\usepackage[usenames]{color}
\usepackage{hyperref}
\usepackage{fancyhdr}
\usepackage[top=30truemm,bottom=30truemm,left=30truemm,right=30truemm]{geometry}

\hypersetup{colorlinks=true}

\title{Full termination of MMP for klt pairs with big boundaries}
\author{Kenta Hashizume}
\date{2026/09/27}
\keywords{termination of minimal model program, log minimal model, Mori fiber space}
\subjclass[2020]{14E30}
\address{Department of 
Mathematics, Faculty of Science, Niigata University, Niigata 950-2181, Japan}
\address{Institute for Research Administration, Niigata University, Niigata 950-2181, Japan}
\email{hkenta@math.sc.niigata-u.ac.jp}

\newtheorem{thm}{Theorem}[section]

\newtheorem{lem}[thm]{Lemma}
\newtheorem{cor}[thm]{Corollary}
\newtheorem{prop}[thm]{Proposition}

\theoremstyle{definition}
\newtheorem{defn}[thm]{Definition}
\newtheorem{rem}[thm]{Remark}

\newtheorem{idea}[thm]{Rough idea}
\newtheorem*{ack}{Acknowledgments} 
 
\newtheorem*{b-divisor}{b-divisors} 
 
\newtheorem*{g-pair}{Generalized pairs} 
\newtheorem*{adj-g-pair}{Divisorial adjunction for generalized pairs} 
\newtheorem*{mmp-g-pair}{MMP for generalized pairs}

\newtheorem{setup}[thm]{Setup}

\newtheorem{step1}{Step}

\newtheorem*{claim*}{Claim}
\begin{document}

\begin{abstract}
We prove the full termination of the minimal model program for klt pairs with big boundaries. 
\end{abstract}

\maketitle

\tableofcontents

\section{Introduction}

We work over the field of complex numbers. 

In this paper, we prove the following results.

\begin{thm}[Full termination]\label{thm--full-termination}
Let $(X,\Delta)$ be a projective klt pair such that $K_{X}+\Delta$ or $\Delta$ is big. 
Then, there is no infinite sequence of steps of a $(K_{X}+\Delta)$-MMP. 
\end{thm}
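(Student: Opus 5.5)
The plan is to reduce to the case where $K_X+\Delta$ is pseudo-effective and then run a special-termination/induction argument driven by BCHM-type finiteness. The key input is the finiteness of models.

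\textbf{Reductions.} If $K_X+\Delta$ is big, then it is pseudo-effective. If instead $\Delta$ is big, there are two cases.

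If $K_X+\Delta$ is not pseudo-effective, I would use BCHM. Since $\Delta$ is big, we can write $\Delta\sim_{\mathbb{R}} A+B$ with $A$ ample and $(X,B)$ klt. Each step of a $(K_X+\Delta)$-MMP is then a step of a $(K_X+B+A)$-MMP. The MMP contracts or flips only finitely many "types" of rays, and the key point is that the models appearing are weak log canonical models of $(X,B+A+tH)$ for suitable $t$. This fails for MMPs without scaling, so a better approach is needed.

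The approach I would actually use is to show that every MMP with $\Delta$ big is an MMP with scaling in disguise. Let $X=X_0\dashrightarrow X_1\dashrightarrow\cdots$ be the sequence, and suppose it is infinite, so after finitely many steps all maps are flips. Fix a general ample $H$ on $X$. For each $i$, the map $X\dashrightarrow X_i$ is $(K_X+\Delta)$-negative and $X_i$ is $\mathbb{Q}$-factorial. Moreover $\Delta_i$ is big, so $X_i$ is a model for a klt pair $(X,B+A)$ with $A$ ample. By BCHM finiteness of models (Corollary 1.1.5 there), the set of $\mathbb{Q}$-factorial weak log canonical models of $(X,B+A+\sum t_jH_j)$ over a polytope of boundaries is finite.

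The hard point is that $X_i$ need not be a weak lc model of any such pair. To handle this, I would use the Shokurov–Birkar approach: lc thresholds and ACC, plus special termination. Special termination for klt pairs with big boundary follows by induction on dimension via adjunction, since the restriction of a big boundary to a lc center gives a pair with big boundary after perturbation. It then suffices to reduce to the case where the flipping loci eventually avoid nothing relevant. Concretely, perturb: replace $\Delta$ with $\Delta+\epsilon\sum E$ for divisors over $X$ to make $(X,\Delta')$ dlt with reduced part containing all flipping loci, as in Birkar's ``termination of MMP with scaling'' trick.

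\textbf{Main step.} I would show the sequence of flips is eventually an MMP with scaling of some ample divisor, and then apply BCHM. For each $i$, pick an ample $H_i$ on $X_i$ such that $K_{X_i}+\Delta_i+H_i$ is nef and trivial on the next flipping ray. Let $\lambda_i$ be the nef threshold. The difficulty is comparing these $H_i$ across different $i$. Pulling everything back to a common $X$, all $H_i$ live in the finite-dimensional space $N^1(X)$, and $\Delta$ big gives room to absorb them. I would try to show that the cone of divisors $D$ such that the MMP is $D$-scaling is nonempty for a tail, using the compactness of the set of boundaries $\{B+A'\}$ with $A'$ ample in a bounded polytope. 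Finiteness of models over that polytope then gives finitely many $X_i$ up to isomorphism. Since the discrepancies strictly increase along the MMP, no model can repeat, which gives a contradiction.

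The main obstacle is exactly this step: constructing a single finite-dimensional polytope of big klt boundaries so that every $X_i$ is a weak lc model of some boundary in it. Making the polytope compact is where the bigness hypothesis must be used, via a uniform ample part $A$ and the finiteness of models.
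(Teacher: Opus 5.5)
Your proposal has a genuine gap, and it is exactly where you locate it. Everything depends on the ``main step'': one compact rational polytope of klt boundaries on $X$, with a fixed ample part, such that every model $X_i$ of the given MMP is a weak log canonical model of some boundary in it. You also phrase this as the MMP being an MMP with scaling in disguise. Once that polytope exists, BCHM's finiteness of weak lc models (Theorem~E) and non-repetition of models finish the proof. So this step already carries the whole content of the theorem, and you give no mechanism for it.

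Along an infinite tail of flips $X_{i_0}\dashrightarrow X_i$, making $K_{X_i}+\Delta_i+D_i$ nef forces $D_i$ to depend on $i$ (say $D_i$ ample on $X_i$). The strict transforms of the $D_i$ on $X$ have no reason to stay in a bounded set with a uniform ample part. There is no visible way to get that compactness without already knowing that only finitely many models occur.

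The fallback through ACC for lc thresholds and special termination does not close the gap either. Special termination reduces to termination of flips in dimension $d-1$ for the pairs induced on lc centers. The restriction of a big boundary to a center inside its augmented base locus need not be big, so your induction leaves the big-boundary class. Birkar's ACC-plus-special-termination argument needs the full LMMP in dimension $d-1$, which is open.

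The missing ingredient is arithmetic control that holds along \emph{all} MMPs, not only those with scaling. This is the Han--Qi--Zhuang uniform bound on Cartier indices of Weil divisors on every model of every $(K_X+\Delta)$-MMP (Theorem~\ref{thm--mmp-Cartierindex-bound}). The paper combines it with two further inputs:
\begin{enumerate}
\item the length of extremal rays;
\item a full-dimensional rational polytope of klt boundaries $B$ around $\Delta$ such that $K_{X'}+\phi_*B$ is negative on every ray $R'$ that can occur (Proposition~\ref{prop--mmp-perturb-boundary}, Corollary~\ref{cor--mmp-boundary-polytope}).
\end{enumerate}
Together these show that only finitely many extremal-ray functions $\zeta_{X',R'}$ occur (Theorem~\ref{thm--finite-intnum-func}), hence only finitely many divisors can ever be contracted.

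Finiteness of the models is then proved by induction on dimension for families of small birational models with a uniform klt condition (Theorem~\ref{thm--ind-finiteness}), not via a polytope of boundaries. End models that are minimal models are finite by BCHM Theorem~E. End models that are Mori fiber spaces are controlled by the induction hypothesis applied to their bases (Section~\ref{sec--base}). One then walks back from these finitely many end models through a uniformly bounded number of steps (Proposition~\ref{prop--num-step-mmp}). Each step has only finitely many predecessors with a prescribed extremal-ray function (Lemmas~\ref{lem--inv-divcont} and~\ref{lem--inv-flip}).

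The theorem follows by applying this to $\mathfrak{F}=\{X_{i_0}\dashrightarrow X_i\}$, after checking the uniform klt condition via Birkar's lower bound on lc thresholds. Your final step, that models cannot repeat, matches the paper's; what is missing is everything that produces the finiteness.
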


\begin{thm}[Finiteness of marked Mori fiber spaces]\label{thm--fin-mfs}
Let $(X,\Delta)$ be a projective $\mathbb{Q}$-factorial klt pair such that $\Delta$ is big and $K_{X}+\Delta$ is not pseudo-effective. 
Then the set of marked Mori fiber spaces arising from sequences of steps of $(K_X+\Delta)$-MMP is finite up to isomorphisms compatible with the markings. 
\end{thm}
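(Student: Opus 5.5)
The idea is to realize every marked Mori fiber space $X \dashrightarrow X_n \to Z$ coming from a $(K_X+\Delta)$-MMP as an ample model, or weak lc model, of $K_X+\Delta'$, where $\Delta'$ ranges over a \emph{compact rational polytope} of klt boundaries on $X$. Finiteness would then follow from the finiteness of models of Birkar--Cascini--Hacon--M\textsuperscript{c}Kernan (BCHM, Corollary 1.1.5).

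\textbf{Step 1: set-up.} Since $\Delta$ is big and $(X,\Delta)$ is klt, I fix an ample $\mathbb{Q}$-divisor $A$ and write $\Delta\sim_{\mathbb{R}}\epsilon A+B$ with $B\ge 0$ and $(X,B+\epsilon A)$ klt. Replacing $A$ by a general member of a multiple, I may assume that $(X,B+\epsilon A+tA)$ is klt for all $t\in[0,T]$, where $T$ is a constant fixed in Step 2. Now let $X=X_0\dashrightarrow X_1\dashrightarrow\cdots\dashrightarrow X_n\xrightarrow{f}Z$ be a sequence of steps of a $(K_X+\Delta)$-MMP ending with a Mori fiber space. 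Let $\Delta_n$ and $A_n$ be the birational transforms of $\Delta$ and $A$ on $X_n$. Because the map $X\dashrightarrow X_n$ is $(K_X+\Delta)$-negative, $X_n$ is a weak lc model, in the sense of the negativity lemma, of any divisor of the form $K_X+\Delta+M$ whose transform on $X_n$ is nef and whose discrepancy comparison has the correct sign.

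\textbf{Step 2: relative triviality.} The general fibers of $f$ cover $X_n$, so a curve $C$ spanning the extremal ray of $f$ satisfies $B_n\cdot C\ge 0$ and $A_n\cdot C>0$. Hence $A_n$ is $f$-ample. As $\rho(X_n/Z)=1$, there is a unique $t>0$ with
\[
K_{X_n}+\Delta_n+tA_n\equiv_Z 0 .
\]
By the length of extremal rays, $-(K_{X_n}+\Delta_n)\cdot C\le 2\dim X$ for a suitable $C$, and $A_n\cdot C\ge 1/m$ with $m$ a fixed Cartier index of $A$. This gives a uniform bound $t\le T:=2m\dim X$. Consequently $K_{X_n}+\Delta_n+tA_n\equiv f^*D_Z$ for some $\mathbb{R}$-Cartier $D_Z$ on $Z$.

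\textbf{Step 3: make it an ample model.} I would add a further boundary piece: a general member $H$ of a bounded linear system on $X$ whose transform is $f^*$ of a sufficiently ample divisor on $Z$. The aim is to make $f^*(D_Z)+H_n$ semiample and big over $Z$'s ample cone, so that $Z$ is the ample model of $K_X+\Delta+tA+H$ and $X_n$ is a weak lc model of it. The marking is then recovered from the model.

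This is the step I expect to be the \textbf{main obstacle}: the coefficient and linear system of $H$ must be chosen from a \emph{fixed} finite-dimensional family independent of the MMP. My first attempt would exploit that $K_X+\Delta$ is not pseudo-effective, together with the boundedness of $t$, to get $D_Z$ in a bounded region. I would then use Shokurov's polytope argument, namely rationality of the boundary of the nef cone on the finite set of models, to show that finitely many $H$ suffice. An alternative that avoids $H$ altogether is to view $X_n\to Z$ as the output of a $(K_X+\Delta+tA)$-trivial contraction in the sense of the ample-model decomposition of BCHM on the polytope $\{K_X+B+(\epsilon+s)A : s\in[0,T]\}$.

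\textbf{Step 4: conclude.} Once Step 3 places every $(X_n\to Z)$ among the weak lc models and ample models associated with a compact rational polytope of klt big boundaries on $X$, BCHM gives finitely many such models up to isomorphism compatible with $X\dashrightarrow X_n$. This yields finiteness of the marked Mori fiber spaces.
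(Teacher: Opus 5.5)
\textbf{There is a genuine gap in Steps 3--4.} Placing every output $X_n\to Z$ among the weak lc (or ample) models of $K_X+\Delta'$, with $\Delta'$ in one fixed compact polytope, is the whole content of the theorem, and your mechanism cannot provide it.

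For $X_n$ to be a weak lc model of $K_X+\Delta+M$, the map $X\dashrightarrow X_n$ must be $(K_X+\Delta+M)$-nonpositive. This is the ``discrepancy comparison has the correct sign'' hypothesis you slip into Step 1, and adding a divisor large enough to make $K_{X_n}+\Delta_n+M_n\equiv_Z 0$ destroys it. Concretely, take a common resolution $p\colon W\to X$, $q\colon W\to X_n$ and write $p^*(K_X+\Delta)=q^*(K_{X_n}+\Delta_n)+E$ with $E\ge 0$. Since $p^*A$ is $q$-nef and $q_*(q^*A_n-p^*A)=0$, the negativity lemma gives $F:=q^*A_n-p^*A\ge 0$, and $F\neq 0$ unless $X\dashrightarrow X_n$ is an isomorphism. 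Hence
$p^*(K_X+\Delta+tA)-q^*(K_{X_n}+\Delta_n+tA_n)=E-tF$.
Nothing relates the coefficients of $E$ and $F$, so for the $t$ forced in Step 2 this need not be effective. Already for a single divisorial step followed by the Mori fibre space, effectivity amounts to the scaling thresholds decreasing, which holds for an MMP with scaling of $A$ but not for an arbitrary MMP. For the same reason $(X_n,\Delta_n+tA_n)$ need not be klt.

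What one \emph{can} prove is what the paper proves in Proposition~\ref{prop--mmp-perturb-boundary} and Corollary~\ref{cor--mmp-boundary-polytope}, using Theorem~\ref{thm--mmp-Cartierindex-bound}. Every $(K_X+\Delta)$-MMP is a $(K_X+B)$-MMP for $B$ in a \emph{small} full-dimensional polytope $\mathcal{C}$ around $\Delta$. But the same corollary gives $(K_{X_n}+B_n)\cdot R<0$ on the Mori fibre ray $R$ for every $B\in\mathcal{C}$. So where the MMP is controlled, $X_n\to Z$ is never a weak lc model, and where you would need it to be one, you have no control. Your alternative via the ample-model decomposition of $\{K_X+B+(\epsilon+s)A\}$ fails for the same reason: those chambers see models reached by MMPs for those boundaries (e.g.\ with scaling), not outputs of arbitrary $(K_X+\Delta)$-MMPs.

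Step 2's bound $A_n\cdot C\ge 1/m$ is also unjustified: $A$ being Cartier on $X$ says nothing about $A_n$ on $X_n$, and a uniform bound is exactly Han--Qi--Zhuang's theorem. Finally, by Theorem~\ref{thm--mfsnumber-mmpnumber}, applied to $X\times\mathbb{P}^1$ with $\Gamma=\frac12 S+\Delta\times\mathbb{P}^1$ (again big with $K+\Gamma$ not pseudo-effective), the statement implies full termination with big boundary one dimension lower. So one should not expect it to follow from BCHM-type finiteness of models alone.

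The paper never realizes Mori fibre spaces as models over a polytope. It uses Theorem~\ref{thm--mmp-Cartierindex-bound} to get finitely many extremal-ray functions (Theorem~\ref{thm--finite-intnum-func}), hence finitely many prime divisors that can ever be contracted (Theorem~\ref{thm--finite-div-cont-mmp}). Passing to an infinite subfamily with the same contracted divisors, the outputs $X'_i$ become small modifications of $X'_1$. Birkar's lct bound together with Corollary~\ref{cor--mmp-boundary-polytope} shows they satisfy the uniform klt condition $(*)$ of Definition~\ref{defn--ind-small-family}.

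Finiteness of such families (Theorem~\ref{thm--ind-finiteness}) is then proved by induction on dimension:
\begin{itemize}
\item a uniform Cartier index holds for the whole family via log canonical volumes (Lemma~\ref{lem--base-bound-Cartier});
\item bases of Mori fibre spaces with the same contracted divisors and extremal-ray function form a lower-dimensional family of the same type (Theorem~\ref{lem--base-mfs-small}, Corollary~\ref{cor--ind-form-small-family});
\item BCHM's finiteness of models enters only for the log-minimal-model ends;
\item inverse-MMP finiteness (Lemmas~\ref{lem--inv-divcont} and~\ref{lem--inv-flip}) plus a uniformly bounded number of steps (Proposition~\ref{prop--num-step-mmp}) carries finiteness of the end models back to the starting models.
\end{itemize}
None of these ingredients appears in your plan, and without something of this strength Step 3 has no content.
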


\subsection{Comparison with previous results}

The termination of the minimal model program (MMP, for short) is one of the most fundamental open problems in birational geometry.
In dimension three, the minimal model program was established through the works of Mori, Kawamata, Koll\'{a}r, Shokurov, and others (see, for example, \cite{mori-threefolds}, \cite{mori-flip}, \cite{kawamata-termination}, \cite{shokurov-flip}, \cite{shokurov-log-models}, \cite{kollar-mori}).
In dimension four, several important cases have been established (\cite{alexeev-hacon-kawamata}, \cite{birkar-termination-fourfold}, \cite{moraga-fourfold}, \cite{shokurov1}, \cite{han-liu-zhuang}).
In higher dimensions, Birkar--Cascini--Hacon--M\textsuperscript{c}Kernan \cite{bchm} established the termination of MMP with scaling for Kawamata log terminal (klt) pairs of general type and for klt pairs with big boundaries.
Recently, Han--Qi--Zhuang \cite{han-qi-zhuang} established several boundedness results for arbitrary MMP in the general type setting, including a uniform bound on the Cartier indices of Weil divisors. Building on their results and Birkar's inductive approach \cite{birkar-termination}, Han--Liu--Zhuang \cite{han-liu-zhuang} proved the effective termination in dimension five in the general type and pseudo-effective big-boundary settings, and obtained explicit bounds for several classes of MMP in lower dimensions.
Despite these advances, the termination of arbitrary MMP remains open in higher dimensions.

Theorem \ref{thm--full-termination} removes the dimension restriction in these settings and also treats the non-pseudo-effective big-boundary case.
Furthermore, Theorem \ref{thm--full-termination} also implies the effective termination in the sense of \cite{han-liu-zhuang}.
Theorem \ref{thm--fin-mfs} concerns a different finiteness problem from the Sarkisov program (\cite{corti-sarkisov}, \cite{bruno-matsuki}, \cite{hacon-mckernan-sarkisov}) and the geography of log models (\cite{bchm}, \cite{shokurov-choi}).
In Theorem \ref{thm--fin-mfs}, the starting pair is fixed, and we prove the finiteness of marked Mori fiber spaces obtained by running the MMP from this fixed pair.

Our proof does not involve minimal log discrepancies (mld, for short). 
Shokurov showed that the ascending chain condition and the lower semi-continuity conjectures for mld imply the termination of flips (\cite{shokurovV}, see also \cite[Theorem 3.8]{flipsandflops}). 
Our approach uses a different idea and avoids arguments involving mld. 
On the other hand, it essentially uses the existence of log minimal models and Mori fiber spaces for the pairs under consideration. 
Therefore, the proof in this paper does not apply to the full termination for arbitrary klt pairs.

\subsection{Strategy of the proof}

We briefly explain how the main ingredients are used. 

The first part of the proof is based on the uniform bound on the Cartier indices along arbitrary MMP obtained by Han--Qi--Zhuang \cite[Theorem~1.1]{han-qi-zhuang}. Combined with the boundedness of the lengths of extremal rays, this allows us to carry out an argument of Shokurov polytopes simultaneously on all pairs appearing in sequences of steps of the MMP starting from a fixed pair. More precisely, we first prove that the negativity of every extremal ray occurring in such an MMP is preserved under a sufficiently small perturbation of the boundary. 
After that, using a rational polytope of boundaries and a uniform bound on the Cartier indices, we show that only finitely many extremal-ray functions (see Definition \ref{defn--intnum-func}) can occur. As a consequence, only finitely many prime divisors on the original variety can be contracted by sequences of steps of the MMP. These arguments are carried out in Section \ref{sec--fin-ext-ray-func}. 

The second part of the proof is an induction on the dimension. 
In Section \ref{sec--base}, we fix one Mori fiber space and consider a certain set of Mori fiber spaces  associated with it. We show that their bases form a family of birational models satisfying a uniform klt condition. 
Motivated by this, in Section~\ref{sec--ind} we fix a projective $\mathbb Q$-factorial variety $Y_0$ and consider a family of small birational models $\psi\colon Y_0\dashrightarrow Y$
satisfying such a condition (see Definition \ref{defn--ind-small-family}). We choose an effective big $\mathbb Q$-divisor $L_{Y_0}$ on $Y_0$ and put $\Delta_Y=\alpha\psi_{*}L_{Y_0}$
for every member of the family. The log canonical volume introduced in \cite{han-qi-zhuang} gives a uniform bound on the Cartier indices of Weil divisors along all the MMP, independently of the choice of $Y$. 
Thus, the argument in Section \ref{sec--fin-ext-ray-func} 
 can be applied uniformly to this family, and we again obtain finitely many extremal-ray functions and finitely many prime divisors which may be contracted.

We then prove, by induction on $\dim Y_0$, that there are only finitely many varieties $Y$ in such a family, up to isomorphisms compatible with the markings (Theorem \ref{thm--ind-finiteness}). 
By \cite[Theorem~E]{bchm}, marked log minimal models form a finite set. For Mori fiber spaces, after fixing the contracted divisors and the extremal-ray function, their bases form a family of lower-dimensional small birational models satisfying the uniform klt condition, so the induction hypothesis gives finiteness of the possible end models. Section \ref{sec--inv-mmp} gives finiteness in the inverse direction, and Proposition \ref{prop--num-step-mmp} allows us to reach an end model by a uniformly bounded number of steps. Hence the finiteness of the end models implies the finiteness of the possible starting models. This proves Theorem \ref{thm--ind-finiteness}, from which the main theorems follow.

\subsection{Organization of the paper}
The contents of this paper are as follows: 
In Section~\ref{sec--pre}, we collect definitions and known results used in this paper. 
In Section~\ref{sec--fin-ext-ray-func}, we prove the finiteness of the extremal-ray functions and the finiteness of the prime divisors which can be contracted. 
In Section~\ref{sec--base}, we study the bases of Mori fiber spaces. 
In Section~\ref{sec--ind}, we define a certain set of marked varieties and establish an inductive argument. 
In Section~\ref{sec--inv-mmp}, we study inverse MMP steps and prove some finiteness results used in the induction. 
In Section~\ref{sec--prf}, we prove the main results. 
In Section~\ref{sec--exam}, we give an example describing the relation between finiteness of marked Mori fiber spaces and full termination in one lower dimension.

\begin{ack}
The author would like to express his deep gratitude to Professor Paolo Cascini, who introduced him to the problem of the finiteness of Mori fiber spaces when he was a Ph.D. student in 2016.
The author was partially supported by JSPS KAKENHI Grant Number JP23K20787.
The author acknowledges the use of ChatGPT during the development of this work. It was used primarily in the exploratory stages of the research, including discussions of possible formulations, proof strategies, and examples. All mathematical statements and proofs appearing in the final manuscript were independently checked and verified by the author, who takes full responsibility for the contents of this paper.
\end{ack}

\section{Preliminaries}\label{sec--pre}

In this section, we define some notions concerning the MMP and we introduce an important result (\cite[Theorem 1.1]{han-qi-zhuang}). 

\subsection{Definitions}
In this subsection we collect definitions.

\begin{defn}[Divisors and morphisms]
Let $X$ be a normal projective variety. 
We use the standard definitions of nef $\mathbb{R}$-divisor, ample $\mathbb{R}$-divisor, semi-ample $\mathbb{R}$-divisor, etc. 
In this paper, we do not assume big $\mathbb{R}$-divisors to be $\mathbb{R}$-Cartier.

For a variety $X$ and an effective $\mathbb{R}$-divisor $D$ on it, a {\em log resolution of} $(X,{\rm Supp}D)$ (or a {\em log resolution of} $(X,D)$ if there is no risk of confusion) denotes a projective birational morphism $f\colon Y\to X$ from a smooth variety $Y$ such that the exceptional locus ${\rm Ex}(f)$ is pure codimension one and ${\rm Ex}(f)\cup {\rm Supp}f_{*}^{-1}D$ is a simple normal crossing divisor. 

Let $f\colon X\to Y$ be a projective morphism of varieties. 
Then $f$ is called a {\em contraction} if it is surjective and has connected fibers. 
Let $D$ be a semi-ample $\mathbb{R}$-divisor on $X$. 
We say that $f$ is a {\em contraction induced by} $D$ if $f$ is a contraction and we have $D\sim_{\mathbb{R}}f^{*}A$ for an ample $\mathbb{R}$-divisor $A$ on $Y$. 
We note that a contraction induced by $D$ always exists for any semi-ample $\mathbb{R}$-divisor $D$. 

Let $D$ be a semi-ample $\mathbb{R}$-divisor on a normal projective variety $X$. 
We can write $D=\sum_{i}r_{i}D_{i}$, where $r_{i}$ are positive real numbers and $D_{i}$ are semi-ample $\mathbb{Q}$-Cartier divisors.
Then $D$ is {\em general} (resp.~{\em sufficiently general}) if there is a sufficiently large and divisible integer $k>0$ such that $kD_{i}$ are base point free Cartier divisors and $D_{i}=\frac{1}{k}D'_{i}$, where $D'_{i}$ are general (resp.~sufficiently general) elements of $|kD_{i}|$. 
In particular, when $D$ is general, $D$ is effective and all coefficients of $D$ are less than one. 
\end{defn}

\begin{defn}[Singularities of pairs, {\cite{kollar-mori}, \cite{bchm}, \cite{kollar-mmp}}]
A {\em pair} $(X,\Delta)$ consists of a normal variety $X$ and an effective $\mathbb{R}$-divisor $\Delta$ on $X$ such that $K_{X}+\Delta$ is $\mathbb{R}$-Cartier. 
%Let $(X,\Delta)$ be a pair. %, and let $P$ be a prime divisor over $X$. 
%Then $a(P,X,\Delta)$ denotes the discrepancy of $P$ with respect to $(X,\Delta)$. 
We use standard definitions of Kawamata log terminal (klt, for short) pair and log canonical (lc, for short) pair. 
%In \cite{kollar-mori}, pairs and classes of singularities are defined in the framework of $\mathbb{Q}$-divisors. But, we can similarly define those singularity classes for pairs of a normal variety and a boundary $\mathbb{R}$-divisor. 
\end{defn}

\begin{defn}[Minimal model program]\label{defn--mmp-fullgeneral}
Let $(X,\Delta)$ be a projective lc pair.   

A {\em step of a $(K_{X}+\Delta)$-MMP} is a diagram
 $$
\xymatrix{
(X,\Delta)\ar@{-->}[rr]^-{\phi}\ar[dr]&&(X',\Delta':=\phi_{*}\Delta)\ar[dl]\\
&V
}
$$
consisting of normal projective varieties $X$, $X'$, and $V$ such that
\begin{itemize}
\item
$X \to V$ is a birational morphism and $X' \to V$ is a small birational morphism, 
\item
$-(K_{X}+\Delta)$ is ample over $V$, and 
\item
$K_{X'}+\Delta'$ is $\mathbb{R}$-Cartier and ample over $V$, and
\item
$\rho(X/V)=1$.  
\end{itemize}
Sometimes we call $X \to V$ a {\em $(K_{X}+\Delta)$-negative extremal contraction}.
\end{defn}

\begin{defn}[Models, cf.~{\cite{bchm}}]
Let $(X,\Delta)$ be a projective lc pair. 
Let $(X',\Delta')$ be a projective pair with a birational contraction $\phi\colon X \dashrightarrow X'$ such that $\Delta'=\phi_{*}\Delta$. 
We say that $(X', \Delta')$ is a {\it log minimal model} of $(X,\Delta)$ if 
\begin{itemize}
\item
$X'$ is $\mathbb{Q}$-factorial, 
\item
$K_{X'}+\Delta'$ is nef, and 
\item
for any prime divisor $P$ on $X$ which is exceptional over $X'$, we have
\begin{equation*}
a(P, X, \Delta) < a(P, X', \Delta').
\end{equation*}
\end{itemize}
We say that $(X',\Delta')$ is a {\it Mori fiber space} of $(X, \Delta)$ if $X'$ is $\mathbb{Q}$-factorial and there is a contraction $X' \to Z$ with ${\rm dim}\,Z<{\rm dim}\,X'$ such that 
\begin{itemize}
\item
the relative Picard number $\rho(X'/Z)$ is one and $-(K_{X'}+\Delta')$ is ample over $Z$, and 
\item
for any prime divisor $P$ over $X$, we have
$$a(P,X,\Delta)\leq a(P,X',\Delta')$$
and strict inequality holds if $P$ is a divisor on $X$ and exceptional over $X'$.
\end{itemize}
\end{defn}

\begin{rem}
There are some variants of definitions of log minimal models (cf.~\cite{birkar-flip}). 
In this paper, we only deal with log minimal models and Mori fiber spaces for projective $\mathbb{Q}$-factorial klt pairs. 
Hence, these differences are immaterial for our purposes. 
\end{rem}

\begin{defn}[Marked log minimal model and marked Mori fiber space]
Let $(X,\Delta)$ be a projective lc pair, and let $\phi \colon (X,\Delta) \dashrightarrow (X',\Delta')$ be a birational contraction to a log minimal model (resp.~a Mori fiber space). 
In this paper, we call $(X',\Delta')$ together with $\phi \colon (X,\Delta) \dashrightarrow (X',\Delta')$ a {\em marked log minimal model} (resp.~{\em marked Mori fiber space}). 

We say that two marked log minimal models (or marked Mori fiber spaces) 
$$\phi_{i} \colon (X,\Delta) \dashrightarrow (X'_{i},\Delta'_{i})\qquad (i=1,2)$$
are {\em isomorphic} if there exists an isomorphism $\psi \colon X'_{1}\to X'_{2}$ such that $\phi_{2}=\psi \circ \phi_{1}$. 
\end{defn}

\subsection{Known and basic results} 

In this subsection we collect known and basic results.

\begin{lem}\label{lem--log-min-model-small}
Let $(X,\Delta)$ be a projective klt pair, and let $(X',\Delta')$ be another projective klt pair. 
Suppose that there exists a small birational map $\phi \colon X \dashrightarrow X'$ such that $\Delta' = \phi_{*}\Delta$. 
Then a projective klt pair $(X'',\Delta'')$ is a log minimal model of $(X,\Delta)$ if and only if $(X'',\Delta'')$ is a log minimal model of $(X',\Delta')$. 
\end{lem}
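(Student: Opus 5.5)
By symmetry of the statement (the inverse $\phi^{-1}$ is also small and $\Delta=\phi^{-1}_{*}\Delta'$), it suffices to prove one direction. So I will assume $(X'',\Delta'')$ is a log minimal model of $(X,\Delta)$, with birational contraction $\phi''\colon X\dashrightarrow X''$, and show it is one of $(X',\Delta')$ via $\phi''\circ\phi^{-1}$.

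The plan is to check the three conditions in the definition, plus the requirement that the marking map be a birational contraction. First, since $\phi$ is small, $\phi^{-1}$ extracts no divisors. Hence $\phi''\circ\phi^{-1}\colon X'\dashrightarrow X''$ extracts no divisors either, i.e.\ it is a birational contraction. Moreover $(\phi''\circ\phi^{-1})_{*}\Delta'=\phi''_{*}\Delta=\Delta''$, because $\phi$ identifies the prime divisors of $X$ and $X'$ together with their coefficients in $\Delta$ and $\Delta'$. The conditions that $X''$ is $\mathbb{Q}$-factorial and $K_{X''}+\Delta''$ is nef do not depend on the source, so they hold automatically.

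The main point is the discrepancy condition. Let $P$ be a prime divisor on $X'$ that is exceptional over $X''$. Since $\phi$ is small, $P$ is the birational transform of a prime divisor $P_X$ on $X$, and $P_X$ is exceptional over $X''$. The key observation is that discrepancies of divisors that are non-exceptional on both sides agree. Take a common resolution $W$ of $X$ and $X'$, with maps $p\colon W\to X$ and $q\colon W\to X'$. Then
\[
p^{*}(K_X+\Delta)-q^{*}(K_{X'}+\Delta')
\]
is supported on divisors exceptional over $X$ or over $X'$. Because $\phi$ is small, these are exactly the divisors exceptional over both. In particular, the coefficient of the strict transform of $P$ in this difference is zero, since that coefficient equals $-\mathrm{coeff}_{P_X}\Delta+\mathrm{coeff}_{P}\Delta'=0$. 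Therefore $a(P,X',\Delta')=a(P_X,X,\Delta)=-\mathrm{coeff}_{P_X}\Delta$, and so
\[
a(P,X',\Delta')=a(P_X,X,\Delta)<a(P_X,X'',\Delta'')=a(P,X'',\Delta''),
\]
where the strict inequality is the log minimal model condition for $(X,\Delta)$.

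Since each step is routine, I expect no real obstacle. The only point needing care is that discrepancy depends only on the divisorial valuation and not on the model. This is automatic for divisors appearing on the model itself: the discrepancy is $-$ the coefficient in the boundary, and $\phi$ preserves those coefficients. The klt hypothesis is not needed beyond making the objects well-defined pairs.
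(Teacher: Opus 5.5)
Your proposal is correct and matches the paper, which simply says the lemma follows immediately from the definition of log minimal models. Your check unpacks exactly that: $\phi''\circ\phi^{-1}$ is a birational contraction pushing $\Delta'$ to $\Delta''$, and $a(P,X',\Delta')=-\mathrm{coeff}_{P}\Delta'=a(P_X,X,\Delta)$ for prime divisors on $X'$. The common-resolution detour is harmless but unnecessary, as you yourself note, since the discrepancy of a divisor lying on the model is just minus its boundary coefficient.
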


\begin{proof}
This immediately follows from the definition of log minimal models. 
\end{proof}

\begin{lem}\label{lem--finite-log-min-model-1}
Let $(X,\Delta)$ be a projective klt pair such that $K_{X}+\Delta$ or $\Delta$ is big. 
Let $\mathfrak{V}$ be a set whose elements are projective klt pairs $(X',\Delta')$ with small birational maps $\phi \colon X \dashrightarrow X'$ such that $\Delta' = \phi_{*}\Delta$. 
Then the set
$$\left\{\tau \colon (X,\Delta) \dashrightarrow (Y,\Gamma)\,\middle|\,\begin{array}{l}
\text{$\tau=\psi \circ \phi$ where $\phi \colon (X,\Delta) \dashrightarrow (X',\Delta') \in \mathfrak{V}$ and}\\
\text{$\psi \colon (X',\Delta') \dashrightarrow (Y,\Gamma)$ is a birational contraction}\\
\text{to a log minimal model $(Y,\Gamma)$}\end{array}\right\}$$
is a finite set up to isomorphism compatible with the markings. 
\end{lem}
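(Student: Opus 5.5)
The plan is to reduce everything to the fixed pair $(X,\Delta)$ via Lemma \ref{lem--log-min-model-small}, and then invoke the finiteness of log minimal models from \cite[Theorem E]{bchm}.

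First, take $\tau=\psi\circ\phi$ with $(X',\Delta')\in\mathfrak{V}$ and $(Y,\Gamma)$ a log minimal model of $(X',\Delta')$. Since $\phi$ is small and $\Delta'=\phi_{*}\Delta$, Lemma \ref{lem--log-min-model-small} shows that $(Y,\Gamma)$ is a log minimal model of $(X,\Delta)$ with marking $\tau$. To use the lemma we must check two things. The composition $\tau$ is a birational contraction, because $\phi^{-1}$ extracts no divisors. We also have $\Gamma=\tau_{*}\Delta$. The discrepancy condition transfers because $a(P,X,\Delta)=a(P,X',\Delta')$ for every prime divisor $P$ on $X$. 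This equality holds since $\phi$ is an isomorphism in codimension one, so $P$ is a divisor on both $X$ and $X'$, and its discrepancy is minus its coefficient in the boundary on either side. Hence every element of the set is a marked log minimal model of the single fixed pair $(X,\Delta)$. Moreover, two such elements are isomorphic as markings exactly when they are isomorphic as marked log minimal models of $(X,\Delta)$, since the isomorphism condition only involves $\tau$.

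Second, we show that $(X,\Delta)$ has only finitely many marked log minimal models up to isomorphism. If $\Delta$ is big, we write $\Delta\sim_{\mathbb{R}}A+B$ with $A$ ample and $B\ge0$. By klt-ness, $(X,\Delta)$ is $\mathbb{R}$-linearly equivalent to a klt pair $(X,\Delta_{\epsilon})$ where $\Delta_{\epsilon}=(1-\epsilon)\Delta+\epsilon B+\epsilon A'$, with $A'\sim_{\mathbb{R}}A$ a general ample divisor. If $K_X+\Delta$ is big, we write $K_X+\Delta\sim_{\mathbb{R}}A+B$ and set $\Delta_\epsilon=\Delta+\epsilon(A'+B)$; then $K_X+\Delta_\epsilon\sim_{\mathbb{R}}(1+\epsilon)(K_X+\Delta)$, so this pair has the same log minimal models.

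In both cases the pair can be taken with a big boundary containing an ample part. We may also pass to a small $\mathbb{Q}$-factorialization of $X$ (by \cite{bchm}) without changing the set of log minimal models, by the same argument as in the first step. Then \cite[Theorem E]{bchm} (finiteness of weak log canonical models / log minimal models in a rational polytope of boundaries with a fixed ample part) gives finitely many marked log minimal models. Along the way we must check that replacing $\Delta$ by $\Delta_\epsilon$ does not change which birational contractions are log minimal models. This follows because $K+\Delta_\epsilon$ is a positive multiple of, or $\mathbb{R}$-linearly equivalent to, $K+\Delta$ up to the same transformation on all models, and discrepancy differences are preserved.

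The main obstacle is the comparison of discrepancies for divisors not on $X$ and the $\mathbb{Q}$-factoriality assumptions. I expect these to be routine given that all markings are small on the source side.
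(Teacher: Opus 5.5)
Your proposal is correct and is essentially the paper's argument. The paper simply combines Lemma~\ref{lem--log-min-model-small}, which identifies the set with the marked log minimal models of the fixed pair $(X,\Delta)$, with \cite[Theorem E]{bchm}. Your replacement of $\Delta$ by $\Delta_{\epsilon}$ spells out the standard reduction, left implicit in the paper, that puts a general ample $\mathbb{Q}$-divisor into the boundary so that Theorem E applies. One small point: Theorem E does not need $\mathbb{Q}$-factoriality, so you can drop the small $\mathbb{Q}$-factorialization. If you keep it, do the perturbation on the $\mathbb{Q}$-factorialization itself, because the pullback of an ample divisor there is no longer ample.
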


\begin{proof}
This follows from Lemma \ref{lem--log-min-model-small} and \cite[Theorem E]{bchm}. 
\end{proof}

The following result is a key ingredient for the proof of  the main results of this paper. 

\begin{thm}[{\cite[Theorem 1.1]{han-qi-zhuang}}]\label{thm--mmp-Cartierindex-bound}
Let $(X,\Delta)$ be a projective $\mathbb{Q}$-factorial klt pair such that $K_{X}+\Delta$ or $\Delta$ is big. 
Then, there exists $\ell \in \mathbb{Z}_{>0}$, depending only on $(X,\Delta)$, such that for any finite sequence of steps of a $(K_{X}+\Delta)$-MMP $(X,\Delta) \dashrightarrow (X',\Delta')$ and any Weil divisor $D'$ on $X'$, $\ell D'$ is Cartier. 
\end{thm}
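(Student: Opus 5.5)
This statement is quoted from \cite[Theorem 1.1]{han-qi-zhuang} and the paper uses it as a black box. If I had to reprove it, I would reduce it to a boundedness statement for the pairs appearing in the MMP. I would then extract the index bound from the boundedness via a uniform torsion bound on local class groups of klt germs.

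\emph{Uniform invariants along the MMP.} First I would record what does not change along any finite sequence of steps $(X,\Delta)\dashrightarrow (X',\Delta')$.
\begin{itemize}
\item Since $(X,\Delta)$ is klt, there is $\epsilon>0$ with $(X,\Delta)$ $\epsilon$-lc. Discrepancies do not decrease along steps of an MMP, so every $(X',\Delta')$ is $\epsilon$-lc.
\item The coefficients of $\Delta'$ lie in the finite set of coefficients of $\Delta$.
\item $X'$ remains $\mathbb{Q}$-factorial, $\rho(X')\le\rho(X)$, and only finitely many new divisorial valuations can be extracted.
\end{itemize}
In the case $\Delta$ big, I would write $\Delta\sim_{\mathbb{R}}A+E$ with $A$ ample and $E\ge 0$, and replace $\Delta$ by $(1-t)\Delta+t(A'+E)$ for a general $A'\sim_{\mathbb R} A$ and small $t$. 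This keeps the MMP a $(K_X+\Delta)$-MMP, keeps the pair klt, and makes the boundary contain a fixed ample piece. In both cases I would then produce an auxiliary big divisor $K_{X'}+\Delta'+\text{(fixed pushforward)}$ whose volume is bounded above independently of the step, because volumes of pushforwards only increase in a controlled way along the MMP.

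\emph{Boundedness.} Using the $\epsilon$-lc property, DCC coefficients and the volume bound, I would aim to show that the varieties $X'$ lie in a bounded family. For this I would use log canonical volumes, in the style of Hacon--McKernan--Xu and Birkar's boundedness of $\epsilon$-lc complements. The difficulty is that $K_{X'}+\Delta'$ is not nef, so HMX does not apply directly. I would first try to pass to the ample model of the fixed big divisor: this model is common to all $X'$ by finiteness of models \cite[Theorem E]{bchm}. Each $X'$ is then a small $\mathbb{Q}$-factorial modification over this model, up to finitely many extracted divisors, and the $X'$ form a log bounded family of $\epsilon$-lc pairs relative to a fixed base.

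\emph{Index bound.} Given log boundedness of $\epsilon$-lc $\mathbb{Q}$-factorial germs, I would conclude by a uniform bound on the torsion of local class groups of such germs. One route is constructibility of the Cartier index in a bounded family via stratification. Another is bounding the order of the regional fundamental group, which controls the index of Weil divisors once one knows $\mathbb{Q}$-factoriality. I expect the main obstacle to be the boundedness step, namely controlling the non-nef varieties $X'$ uniformly. I would try to handle it by working relative to the fixed ample model together with the uniform $\epsilon$.
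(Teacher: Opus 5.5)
Your proposal has a genuine gap. Everything rests on the claim that the models $X'$ form a (log) bounded family, and neither this claim nor the route you sketch for it holds up.

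HMX-type boundedness and boundedness of complements need $K+\Delta$ nef or ample, or a Fano-type structure. Models in the middle of an MMP have neither. Working over a common ample model does not help:
\begin{itemize}
\item If $\Delta$ is big and $K_X+\Delta$ is not pseudo-effective, $K_{X'}+\Delta'$ has no ample model at all.
\item A $(K_X+\Delta)$-MMP is not an MMP for an auxiliary divisor such as $K_X+\Delta+A$, so the ample model of its pushforward is not shared by the $X'$.
\item Even when $K_X+\Delta$ is big and the log canonical model $Z$ is common, $X'\dashrightarrow Z$ is in general only a rational map. So there is no fixed base over which the $X'$ are small modifications.
\end{itemize}
Your auxiliary invariants are also off. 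MMP steps extract no divisors. To control volumes of pushforwards in the non-pseudo-effective case you would need finiteness of the contracted divisors, which is Theorem \ref{thm--finite-div-cont-mmp}, and the paper deduces that from this very statement. Boundedness of the $X'$ is in fact true a posteriori: Theorem \ref{thm--full-termination}, finiteness of negative extremal rays for big boundaries, and K\"onig's lemma give finitely many $X'$. But it lies downstream of the index bound, not upstream. Nor can the uniform $\epsilon$-lc property do the job alone: $A_{r-1}$ surface singularities are canonical with local class group $\mathbb{Z}/r\mathbb{Z}$.

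The missing idea is that you never need boundedness of the varieties, only a uniform lower bound on local volumes. The paper does not reprove the statement, but this is what the log canonical volume of \cite{han-qi-zhuang} provides (compare its use in Lemma \ref{lem--base-bound-Cartier}). Roughly, with $n=\dim X$:
\begin{enumerate}
\item Take a big $L$ and $s>0$. In the big boundary case, require $\Delta-sL\ge 0$ and $(X,\Delta-sL+sD)$ klt for all $D\in|L|_{\mathbb{R}}$. In the general type case, take $L\in|K_X+\Delta|_{\mathbb{R}}$ with $(X,\Delta+sD)$ klt for all $D\in|L|_{\mathbb{R}}$. An lct bound such as \cite[Theorem 1.8]{birkar-bab} supplies $s$ in either case.
\item In the first case these boundaries are $\mathbb{R}$-linearly equivalent to $\Delta$; in the second, $K_X+\Delta+sD\sim_{\mathbb{R}}(1+s)(K_X+\Delta)$. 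Either way every $(K_X+\Delta)$-MMP is automatically an MMP for all these pairs, so the pushed-forward pairs stay klt on every $X'$.
\item A dimension count then gives $\widehat{\rm vol}(x',X')\ge s^{n}{\rm vol}(L)$ for every point $x'$ of every $X'$. If $\lambda^{n}{\rm vol}(v)<m^{n}{\rm vol}(L)$ for a valuation $v$ centered at $x'$, some pushed-forward member satisfies $v(D')\ge\lambda/m$. The klt condition then forces $A_{X'}(v)>s\lambda/m$.
\item A Weil divisor of Cartier index $r$ at $x'$ has an index-one cover that is quasi-\'etale of degree $r$ and multiplies the normalized volume by $r$. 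Normalized volumes are at most $n^{n}$, so $r\le n^{n}/(s^{n}{\rm vol}(L))$, uniformly over all MMPs.
\end{enumerate}
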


We close this section with the definition of linear functions used in this paper, called ``extremal-ray functions'' in the introduction.

\begin{defn}\label{defn--intnum-func}
Let $(X,\Delta)$ be a projective lc pair. 
For any finite sequence of steps of a $(K_{X}+\Delta)$-MMP $(X,\Delta)\dashrightarrow (Y,\Delta_{Y})$ and any $(K_{Y}+\Delta_{Y})$-negative extremal ray $R$ of $\overline{\rm NE}(Y)$, we define 
$$\zeta_{Y,R} \colon N^{1}(X)_{\mathbb{R}}\longrightarrow \mathbb{R}$$
as follows: 
We fix a very ample Cartier divisor $H$ on $Y$, and then we pick an associated minimal curve $C_{R}$ lying on $R$. 
Then, for any $\mathbb{R}$-Cartier $\mathbb{R}$-divisor $D$ on $X$, the birational transform $D_{Y}$ on $Y$ is $\mathbb{R}$-Cartier (cf.~\cite[Remark 6.1]{hashizumehu}). 
We define 
$$\zeta_{Y,R}(D):=(D_{Y}\cdot C_{R}).$$
Note that this is well-defined since \cite[Remark 6.1]{hashizumehu} holds and the negativity lemma shows that $X \dashrightarrow Y$ induces a linear map $N^{1}(X)_{\mathbb{R}} \to N^{1}(Y)_{\mathbb{R}}$. 
We can also check that this does not depend on the very ample Cartier divisor $H$ and the minimal curve $C_{R}$. 
\end{defn}

\begin{rem}\label{rem--basic-intnum-func}
With notations as in Definition \ref{defn--intnum-func}, the following properties hold.
\begin{itemize}
\item
If $D_{Y}$ is Cartier, then $\zeta_{Y,R}(D) \in \mathbb{Z}$. 
\item
If $D_{Y} \equiv K_{Y}+\Gamma$ for some lc pair $(Y,\Gamma)$ and $R$ is a $(K_{Y}+\Gamma)$-negative extremal ray, we have $-2 \cdot {\rm dim}\,X \leq \zeta_{Y,R}(D) <0$. 
\end{itemize}
\end{rem}

\section{Finiteness of extremal-ray functions}\label{sec--fin-ext-ray-func}

In this section we prove the finiteness of extremal-ray functions (Definition \ref{defn--intnum-func}) for projective $\mathbb{Q}$-factorial klt pairs with big boundaries.

\begin{prop}\label{prop--mmp-perturb-boundary}
Let $(X,\Delta)$ be a projective $\mathbb{Q}$-factorial klt pair such that $K_{X}+\Delta$ or $\Delta$ is big. 
Let $B$ and $G$ be effective $\mathbb{R}$-divisors on $X$ such that $(X,B+G)$ is klt and $\Delta \equiv B+G$.
Then there exists a positive real number $\epsilon$ such that 
for any sequence of steps of a $(K_{X}+\Delta)$-MMP $\phi \colon (X,\Delta) \dashrightarrow (X',\Delta')$ % is also a $(K_{X}+(1+t)B+G)$-MMP, 
and any $(K_{X'}+\Delta')$-negative extremal ray $R'$ of $\overline{\rm NE}(X')$, we have 
$$(K_{X'}+(1+t)\phi_{*}B+\phi_{*}G)\cdot R'<0$$ for all $t \in [-\epsilon, \epsilon]$. 
In particular, any $(K_{X}+\Delta)$-MMP is a $(K_{X}+(1+t)B+G)$-MMP for every $t \in [-\epsilon, \epsilon]$. 
\end{prop}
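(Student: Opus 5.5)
The idea is to combine the uniform Cartier index bound of Theorem \ref{thm--mmp-Cartierindex-bound} with the length bound of extremal rays (Remark \ref{rem--basic-intnum-func}). Together they confine $(K_{X'}+\Delta')\cdot C_{R'}$ to a finite set of negative rationals bounded away from $0$. Separately, the perturbation term $t\,\phi_{*}B\cdot C_{R'}$ has size controlled uniformly by $|t|$.

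\textbf{Step 1 (the main term).} Fix $\ell$ as in Theorem \ref{thm--mmp-Cartierindex-bound} for $(X,\Delta)$. Let $\phi\colon X\dashrightarrow X'$ be a finite sequence of steps of the MMP and $R'$ a $(K_{X'}+\Delta')$-negative extremal ray, with minimal curve $C_{R'}$. Write $\Delta'=\phi_*\Delta$.

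If $\Delta$ is a $\mathbb{Q}$-divisor, choose $m$ with $m\Delta$ integral. Then $\ell m(K_{X'}+\Delta')$ is Cartier, so $(K_{X'}+\Delta')\cdot C_{R'}\in \frac{1}{\ell m}\mathbb{Z}$. By Remark \ref{rem--basic-intnum-func} this number also lies in $[-2\dim X,0)$. Hence it is at most $-\frac{1}{\ell m}$.

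For real coefficients, I would first use a Shokurov-type rational polytope decomposition. Write $K_X+\Delta=\sum r_i(K_X+\Delta_i)$ with $r_i>0$, $\sum r_i=1$, each $\Delta_i$ rational, $(X,\Delta_i)$ klt, and each $\Delta_i$ (or $K_X+\Delta_i$) big, the $\Delta_i$ close to $\Delta$. One then needs every $(K_X+\Delta)$-MMP step to be $(K_X+\Delta_i)$-negative. Proving this is essentially the same statement, applied with rational data, so the polytope argument must be set up carefully. I expect the intended route is that the real case reduces to the rational case in exactly this way. Granting it, each $(K_{X'}+\Delta'_i)\cdot C_{R'}$ is at most $-1/(\ell m)$, and therefore $(K_{X'}+\Delta')\cdot C_{R'}\le -\delta$ for a uniform $\delta>0$.

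\textbf{Step 2 (the perturbation term).} Since $\Delta\equiv B+G$ and numerical equivalence of $\mathbb{R}$-Cartier divisors is preserved under the induced linear map $N^1(X)\to N^1(X')$, we have
$$(K_{X'}+(1+t)\phi_*B+\phi_*G)\cdot C_{R'}=(K_{X'}+\Delta')\cdot C_{R'}+t\,\phi_*B\cdot C_{R'}.$$
It remains to bound $|\phi_*B\cdot C_{R'}|$ uniformly. Because $(X,B+G)$ is klt, there is $s>0$ such that $(X,(1+s)B+G)$ and $(X,(1-s)B+G)$ are both klt. The MMP steps considered are $(K+\Delta)$-negative, but not a priori negative for these perturbed pairs, so one cannot directly conclude that $(X',(1\pm s)\phi_*B+\phi_*G)$ stays klt.

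I would handle this by an induction on the number of steps. Suppose that up to $X'$ every step so far is negative for both $K+(1\pm\epsilon)B+G$. Then $(X',(1\pm\epsilon)\phi_*B+\phi_*G)$ is klt, since discrepancies do not decrease under a step that is negative for the pair. Apply the cone theorem length bound to this klt pair:
$$(K_{X'}+(1\pm\epsilon)\phi_*B+\phi_*G)\cdot C_{R'}\ge -2\dim X \quad\text{whenever the left side is negative.}$$
Combined with $(K_{X'}+\Delta')\cdot C_{R'}\ge -2\dim X$, averaging gives $\pm\epsilon\,\phi_*B\cdot C_{R'}\ge -4\dim X$ in the relevant sign. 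If the left side is nonnegative for one sign, we get a bound from the Step 1 inequality $(K_{X'}+\Delta')\cdot C_{R'}\le-\delta$ instead. In either case $|\phi_*B\cdot C_{R'}|\le 4\dim X/\epsilon_0$ for a fixed $\epsilon_0\le s$.

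Now choose $\epsilon<\epsilon_0$ with $\epsilon\cdot 4\dim X/\epsilon_0<\delta$. Then the displayed quantity is negative for every $|t|\le\epsilon$. This shows the next step is again negative for both $K+(1\pm\epsilon)B+G$, which closes the induction.

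\textbf{Main obstacle.} The main obstacle is making this bootstrap consistent. The bound on $\phi_*B\cdot C_{R'}$ must be uniform across all MMPs, while the induction only gives klt-ness for $t=\pm\epsilon_0$. One must therefore arrange that the inductive hypothesis holds at $\pm\epsilon_0$, not only at $\pm\epsilon$. I would try convexity in $t$: negativity at $t=0$ with $\delta$, together with the length bound at the endpoints, should force negativity on the whole interval. The reduction to rational $\Delta$ in Step 1 is the other delicate point.
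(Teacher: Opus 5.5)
Your outline has the right ingredients: the Cartier index bound of Theorem \ref{thm--mmp-Cartierindex-bound}, length of extremal rays, and induction on the number of steps. However, both points you flag as obstacles are genuine gaps, and what you propose does not close either of them.

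\textbf{The real-coefficient case of Step 1.} You do not need the steps to be $(K_X+\Delta_i)$-negative, and trying to arrange that is circular. What you need is a decomposition $\Delta=\sum r_i\Delta_i$ with fixed weights $r_i$ and a fixed common denominator $\ell'$ such that $(X',\phi_*\Delta_i)$ is lc on \emph{every} model $X'$ of \emph{every} $(K_X+\Delta)$-MMP. Given that, the following works:
\begin{enumerate}
\item For every extremal ray, $\alpha_i:=(K_{X'}+\phi_*\Delta_i)\cdot C_{R'}\ge -2\dim X$. This holds trivially if the number is non-negative, and by the length bound otherwise.
\item Each $\alpha_i$ lies in $\frac{1}{\ell\ell'}\mathbb{Z}$.
\item Negativity of $\sum r_i\alpha_i$ forces $\alpha_i<2\dim X(1-r_i)/r_i$.
\end{enumerate}
So only finitely many values occur, which gives a uniform $\delta$. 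A Shokurov polytope built on $X$ cannot ensure that lc-ness persists on all $X'$. The paper instead uses the uniform rational polytope theorem \cite[Theorem 5.6]{han-liu-shokurov}, which depends only on the dimension and the coefficients, so it applies to every $(X',\Delta')$ at once. Integrality plus length on $X'$ alone is not enough for irrational coefficients: $K_{X'}\cdot C_{R'}$ could be large while $\Delta'\cdot C_{R'}$ is close to its negative.

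\textbf{The bootstrap in Step 2.} As set up, it does not close, and convexity cannot rescue it. The function $f(t)=(K_{X'}+(1+t)\phi_*B+\phi_*G)\cdot C_{R'}$ is linear, and the length bound only gives lower bounds. So $f(0)\le-\delta$ and $f(-\epsilon_0)\ge-2\dim X$ are compatible with $f(\epsilon_0)>0$.

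The missing idea is that the far endpoint $t=-1$ is free. Since $X'$ is $\mathbb{Q}$-factorial and $0\le\phi_*G\le\phi_*(B+G)$, the pair $(X',\phi_*G)$ is klt on every model with no induction needed. Hence $f(-1)\ge-2\dim X$, and so $\phi_*B\cdot C_{R'}=f(0)-f(-1)\le 2\dim X-\delta$ uniformly. This yields a fixed $\epsilon'$ with $f<0$ on $[0,\epsilon']$; choose it also so that $(X,(1+\epsilon')B+G)$ is klt.

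Only then induct, with the hypothesis that all previous steps are negative on $[-\epsilon'',\epsilon']$. This makes $(X',(1+\epsilon')\phi_*B+\phi_*G)$ klt, so $f(\epsilon')\ge-2\dim X$. That gives $\phi_*B\cdot C_{R'}\ge-(2\dim X-\delta)/\epsilon'$, and hence $f<0$ on $[-\epsilon'',0]$ for $\epsilon''<\delta\epsilon'/(2\dim X-\delta)$. The induction is self-consistent because of the asymmetry: the upper side comes from $t=-1$, and the lower side from the already-fixed $t=\epsilon'$.
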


\begin{proof}
We may assume $\Delta=B+G$ since any sequence of steps of a $(K_{X}+\Delta)$-MMP is also a $(K_{X}+B+G)$-MMP. 
We fix $\ell \in \mathbb{Z}_{>0}$ as in Theorem \ref{thm--mmp-Cartierindex-bound} throughout this proof.

By applying \cite[Theorem 5.6]{han-liu-shokurov}, we can find positive real numbers $r_{1},\,\cdots,\, r_{p}$ and $\mathbb{Q}$-divisors $\Delta^{(1)},\,\cdots,\, \Delta^{(p)}$ on $X$ such that 
\begin{itemize}
\item
$\sum_{i=1}^{p}r_{i}=1$ and $\sum_{i=1}^{p}r_{i}\Delta^{(i)}=\Delta=B+G$, and
\item
$(X,\Delta^{(i)})$ is lc for every $i$. 
\end{itemize}
Fix a positive integer $\ell'$ such that all $\ell' \Delta^{(i)}$ are Weil divisors on $X$. 

We fix a very ample Cartier divisor $H$ on $X$. 
For any $(K_{X}+B+G)$-negative extremal ray $R \subset \overline{\rm NE}(X)$, there exists a minimal curve $C_{R}$ lying on $R$, that is, a curve $C_{R}$ on $X$ such that $C_{R} \in R \subset N_{1}(X)_{\mathbb{R}}$ and
$$(H \cdot C_{R}) ={\rm min}\{(H\cdot C)\,|\,\text{$C$ is a curve lying on $R$}\}.$$
By the length of extremal rays (\cite[Theorem 3.7]{kollar-mori}, \cite[Theorem 3.8.1]{bchm}) and the minimality of $C_{R}$, we have $(K_{X}+\Delta^{(i)})\cdot C_{R} \geq -2 \cdot {\rm dim}\,X$ for every $1 \leq i \leq p$. Since all $\ell\ell'(K_{X}+\Delta^{(i)})$ are Cartier, we have 
$$(K_{X}+B+G)\cdot C_{R} \in \left \{\sum_{i=1}^{p}r_{i}\alpha_{i} \,\,\middle|\,\,  \text{$\alpha_{i}\in \frac{1}{\ell\ell'}\mathbb{Z}$ \,and\, $\alpha_{i} \geq -2 \cdot {\rm dim}\,X$} \right\}\cap (-\infty, 0).$$
Hence, there exists a positive real number $\alpha$, depending only on $r_{1},\,\cdots,\, r_{p}$, $\ell$, and $\ell'$, such that $(K_{X}+B+G)\cdot C_{R} \leq -\alpha$. 
 
Since $(X,G)$ is klt, the minimality of $C_{R}$ implies $(K_{X}+G)\cdot C_{R}\geq -2 \cdot {\rm dim}\,X$. 
Thus, we can find $\epsilon' \in \mathbb{R}_{>0}$ such that $(X,(1+t)B+G)$ is klt and
$$-2 \cdot {\rm dim}\,X \leq (K_{X}+(1+t)B+G)\cdot C_{R}<0$$
for any $t \in[0,\epsilon']$. 
By using 
$$-2 \cdot {\rm dim}\,X \leq (K_{X}+(1+\epsilon')B+G)\cdot C_{R} \quad {\rm and} \quad (K_{X}+B+G)\cdot C_{R} \leq -\alpha,$$
we can find $\epsilon'' \in \mathbb{R}_{>0}$, depending only on $\alpha$ and $\epsilon'$, such that $ (K_{X}+(1+t)B+G)\cdot C_{R}<0$
for any $t \in[-\epsilon'',\epsilon']$. 

Let 
$$(X,B+G) \dashrightarrow (X_{1},B_{1}+G_{1})\dashrightarrow \cdots \dashrightarrow (X',B'+G')$$
be a sequence of steps of a $(K_{X}+\Delta)$-MMP, and let $R'$ be a $(K_{X'}+\Delta')$-negative extremal ray of $\overline{\rm NE}(X')$, where $\Delta'$ is the birational transform of $\Delta$ on $X'$. 
By induction on the number of steps, we may assume that
$X \dashrightarrow X'$ defines a $(K_{X}+(1+t)B+G)$-MMP for all $t \in [-\epsilon'', \epsilon']$. 
For each $1 \leq i \leq p$, let $\Delta'^{(i)}$ be the birational transform of $\Delta^{(i)}$ on $X'$. 
Then we can apply the previous discussion for $(X',\Delta'=B'+G')$, $\Delta'^{(i)}$ ($1 \leq i \leq p$), $r_{1},\,\cdots,\, r_{p}$, $\ell$, and $\ell'$. 
Indeed, since ${\rm dim}\,X={\rm dim}\,X'$ and every coefficient of $\Delta'$ belongs to the set of the coefficients of $\Delta$, our choice of $r_{1},\,\cdots,\, r_{p}$ and \cite[Theorem 5.6]{han-liu-shokurov} imply that 
\begin{itemize}
\item
$\sum_{i=1}^{p}r_{i}=1$ and $\sum_{i=1}^{p}r_{i}\Delta'^{(i)}=\Delta'$,  
\item
$(X',\Delta'^{(i)})$ is lc for every $i$, and 
\item
every $\ell'\Delta'^{(i)}$ is a Weil divisor on $X'$.  
\end{itemize}
Note that the second condition follows from the independence of \cite[Theorem 5.6]{han-liu-shokurov} of given pairs. 
%Pick the $(K_{X_{j}}+B_{j}+G_{j})$-negative extremal ray $R_{j}$ that defines the step $(X_{j},B_{j}+G_{j}) \dashrightarrow (X_{j+1},B_{j+1}+G_{j+1})$  of the MMP. 
Fix a very ample Cartier divisor on $X'$ and an associated minimal curve $C_{R'}$ lying on $R'$. 
Since every $\ell\ell'(K_{X'}+\Delta'^{(i)})$ is Cartier, as in the previous argument, we have 
$$(K_{X'}+B'+G')\cdot C_{R'} \leq -\alpha$$
for the positive real number $\alpha$ that was previously defined. 
Now $(X',(1+\epsilon')B'+G')$ is klt since $X \dashrightarrow X'$ defines a $(K_{X}+(1+\epsilon')B+G)$-MMP. 
As in the previous argument, using the minimality of $C_{R'}$, we have $(K_{X'}+(1+t)B'+G')\cdot C_{R'}<0$ for any $t \in[-\epsilon'',\epsilon']$. 
%Therefore, $X_{j} \dashrightarrow X_{j+1}$ defines the $(j+1)$-th step of the $(K_{X}+(1+t)B+G)$-MMP for all $t \in [-\epsilon'', \epsilon']$. 

Finally, defining $\epsilon:={\rm min}\{\epsilon'', \epsilon'\}$, we complete the proof of Proposition \ref{prop--mmp-perturb-boundary}. 
\end{proof}

\begin{cor}\label{cor--mmp-boundary-polytope}
Let $(X,\Delta)$ be a projective $\mathbb{Q}$-factorial klt pair such that $K_{X}+\Delta$ or $\Delta$ is big. 
Then there exist finitely many effective $\mathbb{Q}$-divisors $B_{1},\,\cdots ,\,B_{p}$ on $X$ satisfying the following. 
Let $\mathcal{C} \subset N^{1}(X)_{\mathbb{R}}$ be the rational polytope spanned by $B_{1},\,\cdots ,\,B_{p}$. 
Then
\begin{itemize}
\item
$(X,B_{i})$ is klt for every $1 \leq i \leq p$, 
\item
${\rm dim}\, \mathcal{C} =\rho(X)$, 
\item
for any klt pair $(X,B)$ such that $B \in \mathcal{C}$ as a numerical class, any sequence of steps of a $(K_{X}+\Delta)$-MMP $\phi \colon (X,\Delta) \dashrightarrow (X',\Delta')$, and any $(K_{X'}+\Delta')$-negative extremal ray $R'$ of $\overline{\rm NE}(X')$, we have 
$$(K_{X'}+\phi_{*}B)\cdot R'<0,$$ 
in particular, any $(K_{X}+\Delta)$-MMP is also a $(K_{X}+B)$-MMP, and 
\item
if $\Delta$ is big, then $\Delta \in {\rm int}(\mathcal{C})$ as a numerical class. 
\end{itemize}
\end{cor}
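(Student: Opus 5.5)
Because every condition in the corollary depends on numerical classes, the plan is to build the polytope $\mathcal{C}$ from finitely many perturbations of $\Delta$ and use Proposition \ref{prop--mmp-perturb-boundary} to control all of them at once.

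First fix an ample $\mathbb{Q}$-divisor basis. Choose effective $\mathbb{Q}$-Cartier divisors $A_{1},\dots,A_{\rho}$ on $X$ whose classes form a basis of $N^{1}(X)_{\mathbb{R}}$. Since $X$ is $\mathbb{Q}$-factorial, we can take them to be general members of ample $\mathbb{Q}$-linear systems; differences of amples span $N^1$. Consider perturbations of the form $\Delta \pm \delta A_{j}$.

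When $\Delta$ is big, we do not want to add $-\delta A_j$ literally. Instead, write $\Delta\sim_{\mathbb{R}} A+E$ with $A$ ample and $E\ge0$. For small $s>0$, the divisor $\Delta_s:=(1-s)\Delta+s(A+E)\equiv\Delta$ has an ample part $sA$, which lets us subtract small multiples of the $A_j$ while keeping an effective divisor numerically. Concretely, replace $A$ by a general $\mathbb{Q}$-divisor with the same class, so that $(X,\Delta_s)$ is klt. Then set $G:=sA$ and $B:=(1-s)\Delta+sE$. Apply Proposition \ref{prop--mmp-perturb-boundary} to the decompositions $\Delta\equiv B+(G\pm\delta A_j)$, where $G\pm \delta A_j$ is taken numerically equivalent to an effective general divisor (possible since $sA\pm\delta A_j$ is ample for small $\delta$). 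A cleaner route is to apply the proposition with $B$ equal to a general effective divisor numerically equivalent to $\pm\delta' A_j$ plus something. In practice, I would apply the proposition finitely many times, once for each of the $2\rho$ decompositions $\Delta\equiv B_{j}^{\pm}+G_{j}^{\pm}$. Here $B_j^{\pm}$ is chosen so that $(1+t)B_j^{\pm}+G_j^\pm$ moves the class of $\Delta$ in the direction $\pm A_j$ as $t$ varies. Taking the minimum $\epsilon$ over these applications, we get classes $\Delta\pm\epsilon\,\cdot(\text{direction }j)$, realized by klt $\mathbb{Q}$-pairs after a small rational adjustment, such that every extremal ray $R'$ occurring in any $(K_X+\Delta)$-MMP is negative for them. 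Let $B_1,\dots,B_p$ be these $\mathbb{Q}$-divisors, together with $\Delta$-approximations if needed. They span a full-dimensional polytope, and $\Delta$ lies in its interior.

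Next, I must prove the third bullet for an arbitrary klt $B$ with class in $\mathcal{C}$. The key point is that the intersection number $(K_{X'}+\phi_*B)\cdot R'$ depends only on the numerical class of $B$. This uses the linear map $N^1(X)\to N^1(X')$ from Definition \ref{defn--intnum-func}, i.e.\ it equals $\zeta_{X',R'}(K_X+B)$, which is affine-linear in $[B]$. A class in $\mathcal{C}$ is a convex combination of the vertices, and each vertex gives a negative value, so $B$ gives a negative value by convexity. The "in particular" clause then follows by induction on the steps, as at the end of the proof of Proposition \ref{prop--mmp-perturb-boundary}.

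When $\Delta$ is not big (so $K_X+\Delta$ is big), the interior condition is not required, but full-dimensionality still is. I would handle this by taking $G$ to be a small general ample-type piece of the klt pair. Alternatively, take the $B_i$ near $\Delta$ in directions $\pm A_j$ only where effective divisors exist, which is always true for the $+A_j$ direction. The $\rho+1$ classes $\Delta,\Delta+\epsilon A_1,\dots,\Delta+\epsilon A_\rho$ already span a full-dimensional simplex.

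The main obstacle is making the perturbed boundaries rational, effective and klt simultaneously, with $\epsilon$ uniform over all MMPs. The uniformity is exactly what Proposition \ref{prop--mmp-perturb-boundary} supplies, but only along a single segment $(1+t)B+G$. The delicate part is therefore choosing finitely many decompositions $\Delta\equiv B+G$ whose segments point in $\rho$ independent directions. I expect the needed choice to be $B=\delta A_j$-type pieces, using bigness to absorb $-A_j$.
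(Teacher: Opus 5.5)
There is a genuine gap in the case where $K_X+\Delta$ is big but $\Delta$ is not. Your big-$\Delta$ case is essentially the paper's argument, once the decomposition is corrected to $B_j\equiv\delta A_j$, $G_j\equiv\Delta-\delta A_j$; a single application of Proposition~\ref{prop--mmp-perturb-boundary} then gives both signs, since $t\in[-\epsilon,\epsilon]$.

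The proposition only moves the class of $\Delta$ in directions $[B]$ with $B\geq 0$ and $\Delta-B\equiv G\geq 0$. If $\Delta$ is not big, $[\Delta]$ lies on the boundary of the pseudo-effective cone. Then $[B]$ and $[G]$ lie in the minimal face containing $[\Delta]$, which is proper and so sits in a supporting hyperplane. In particular $B$ can never be proportional to an ample $A_j$, since $\Delta\equiv cA_j+G$ would make $\Delta$ big. Hence no collection of such segments spans a polytope of dimension $\rho(X)$.

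Your claim that the simplex on $\Delta,\Delta+\epsilon A_1,\dots,\Delta+\epsilon A_\rho$ has the uniform negativity property is therefore unsupported. What the proposition needs is not an effective divisor in the $+A_j$ direction. It needs the klt pair $(X,G)$ with $G\equiv\Delta-B$: via the length of extremal rays, this pair supplies the uniform upper bound on $B\cdot C_{R'}$, and here it does not exist. Separately, $\Delta$ is an $\mathbb{R}$-divisor sitting at a vertex of your simplex. So it cannot itself be one of the $\mathbb{Q}$-divisors $B_i$, and rationally perturbing it can leave the only region where you know negativity.

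The missing idea is the paper's reduction to the big case. Choose $E\in|K_X+\Delta|_{\mathbb{R}}$ and small $u>0$ with $(X,\Delta+uE)$ klt. Then $\Delta+uE$ is big. Moreover $K_{X'}+\Delta'+u\phi_*E\sim_{\mathbb{R}}(1+u)(K_{X'}+\Delta')$ on every model, so the $(K_X+\Delta)$-MMPs and their negative extremal rays coincide with those of $(X,\Delta+uE)$. The polytope built around $\Delta+uE$ then satisfies the first three bullets. It need not contain $[\Delta]$, which is fine because the fourth bullet is only required when $\Delta$ is big.

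In the big case you should also make the ``small rational adjustment'' precise. Negativity is only known on the convex hull $\mathcal{C}_0$ of the classes $\Delta\pm\epsilon_jA_j$, and adjusting a vertex of $\mathcal{C}_0$ can leave $\mathcal{C}_0$. The paper instead chooses a rational polytope $\mathcal{T}$ in the span of the components of $\Delta$. This span surjects onto $N^1(X)_{\mathbb{R}}$ because the $A_j$ are among the components. $\mathcal{T}$ is chosen with klt vertices, with image inside $\mathcal{C}_0$, and with $[\Delta]$ in the interior of that image.
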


\begin{proof}
We first assume that $\Delta$ is big. 
Replacing $\Delta$ by a suitable $\mathbb{R}$-linear equivalence class, we may write $\Delta=\sum_{i=1}^{m}A_{i}+G$, where $A_{i}$ are effective ample $\mathbb{R}$-divisors on $X$ spanning $N^{1}(X)_{\mathbb{R}}$, and $G$ is an effective $\mathbb{R}$-divisor on $X$. 
By Proposition \ref{prop--mmp-perturb-boundary}, for each $1 \leq i \leq m$, there exists $\epsilon_{i} \in \mathbb{R}_{>0}$ satisfying the property stated in Proposition \ref{prop--mmp-perturb-boundary}. 
Let $\mathcal{C}_{0} \subset N^{1}(X)_{\mathbb{R}}$ be the polytope spanned by $\Delta \pm \epsilon_{1}A_{1},\,\cdots ,\,\Delta \pm \epsilon_{m}A_{m}$. 
Then we have ${\rm dim}\, \mathcal{C}_{0} =\rho(X)$ and $\Delta \in {\rm int}(\mathcal{C}_{0})$ by construction.  
Furthermore, for every klt pair $(X,B)$ such that $B \in \mathcal{C}_{0}$ as a numerical class, $K_{X}+B$ is numerically equivalent to a convex $\mathbb{R}_{\geq 0}$-linear combination of $K_{X}+\Delta \pm \epsilon_{1}A_{1},\,\cdots ,\,K_{X}+\Delta \pm \epsilon_{m}A_{m}$. 
Hence, the choice of $\epsilon_{1},\,\cdots,\,\epsilon_{m}$ implies that for any sequence of steps of a $(K_{X}+\Delta)$-MMP $\phi \colon (X,\Delta) \dashrightarrow (X',\Delta')$ and any $(K_{X'}+\Delta')$-negative extremal ray $R'$ of $\overline{\rm NE}(X')$, we have 
$$(K_{X'}+\phi_{*}B)\cdot R'<0.$$ 
Take a rational polytope $\mathcal{T}$ in the $\mathbb{R}$-vector space spanned by components of $\Delta$
%$$\left\{\sum_{i=1}^{p}(1+t_{i})A_{i}+G\,\middle|\, t_{i} \in [-\epsilon_{i},\epsilon_{i}]\,(1\leq i \leq p)\right\}$$
so that any element of $\mathcal{T}$ satisfies the first condition of Corollary \ref{cor--mmp-boundary-polytope}, the image $\mathcal{C}$ of $\mathcal{T}$ in $N^{1}(X)_{\mathbb{R}}$ satisfies the second and the fourth conditions of Corollary \ref{cor--mmp-boundary-polytope}, and $\mathcal{C} \subset \mathcal{C}_{0}$. 
The inclusion $\mathcal{C} \subset \mathcal{C}_{0}$ implies the third condition of Corollary \ref{cor--mmp-boundary-polytope}. 
Then the vertices $B_{1},\,\cdots,\,B_{p}$ of $\mathcal{T}$ satisfy all the conditions of Corollary \ref{cor--mmp-boundary-polytope}. 

Next, we assume that $K_{X}+\Delta$ is big. 
Pick an element $E \in |K_{X}+\Delta|_{\mathbb{R}}$ and $u \in \mathbb{R}_{>0}$ such that $(X,\Delta+uE)$ is klt. 
Since $K_{X}+\Delta+uE \sim_{\mathbb{R}}(1+u)(K_{X}+\Delta)$, the above argument implies the existence of $B_{1},\,\cdots,\,B_{p}$ as in Corollary \ref{cor--mmp-boundary-polytope}. 
%Since $\Delta+uE$ is not necessarily numerical equivalent to $\Delta$, the fourth condition of Corollary \ref{cor--mmp-boundary-polytope} does not necessarily follow from this construction. Anyway, Corollary \ref{cor--mmp-boundary-polytope} holds. 
\end{proof}

\begin{thm}\label{thm--finite-intnum-func}
Let $(X,\Delta)$ be a projective $\mathbb{Q}$-factorial klt pair such that $K_{X}+\Delta$ or $\Delta$ is big. 
Then there exist finitely many $(K_{X}+\Delta)$-MMP sequences $(X,\Delta) \dashrightarrow (Y_{j},\Delta_{Y_{j}})$ and $(K_{Y_{j}}+\Delta_{Y_{j}})$-negative extremal rays $R_{Y_{j}}$ ($1 \leq j \leq q$) in $\overline{\rm NE}(Y_{j})$ such that for any sequence of steps of a $(K_{X}+\Delta)$-MMP $(X,\Delta) \dashrightarrow (X',\Delta')$ and any $(K_{X'}+\Delta')$-negative extremal ray $R' \subset \overline{\rm NE}(X')$, the equality of functions
$$\zeta_{X',R'}=\zeta_{Y_{j},R_{Y_{j}}}$$
holds for some $j$, where $\zeta_{X',R'}$ and $\zeta_{Y_{j},R_{Y_{j}}}$ are as in Definition \ref{defn--intnum-func}. 
\end{thm}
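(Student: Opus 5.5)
Since $\zeta_{X',R'}$ is a linear function on the finite-dimensional space $N^{1}(X)_{\mathbb{R}}$, it is determined by its values on a basis. So the plan is to show that all such functions take values in a finite set on a fixed basis. I take the rational polytope $\mathcal{C}$ with vertices $B_{1},\dots,B_{p}$ from Corollary \ref{cor--mmp-boundary-polytope}. Because ${\rm dim}\,\mathcal{C}=\rho(X)$, the affine hull of the classes of $B_{1},\dots,B_{p}$ is all of $N^{1}(X)_{\mathbb{R}}$. Hence the classes $K_{X}+B_{1},\dots,K_{X}+B_{p}$ affinely span $N^{1}(X)_{\mathbb{R}}$, and so they span it linearly. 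Therefore $\zeta_{X',R'}$ is determined by the $p$-tuple $\bigl(\zeta_{X',R'}(K_{X}+B_{i})\bigr)_{1\le i\le p}$. If these tuples range over a finite set, then only finitely many functions $\zeta_{X',R'}$ occur, and for each one I choose a realizing pair $(Y_{j},R_{Y_{j}})$.

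Now fix a sequence of steps of a $(K_{X}+\Delta)$-MMP $\phi\colon X\dashrightarrow X'$ and a $(K_{X'}+\Delta')$-negative extremal ray $R'$, with minimal curve $C_{R'}$. By the third condition of Corollary \ref{cor--mmp-boundary-polytope}, $\phi$ is also a $(K_{X}+B_{i})$-MMP. It follows that $(X',\phi_{*}B_{i})$ is klt, since an MMP step does not decrease discrepancies. The same condition gives $(K_{X'}+\phi_{*}B_{i})\cdot R'<0$. The length of extremal rays (\cite[Theorem 3.8.1]{bchm}) and the minimality of $C_{R'}$, exactly as in the proof of Proposition \ref{prop--mmp-perturb-boundary}, then give
$$-2\cdot{\rm dim}\,X\le \zeta_{X',R'}(K_{X}+B_{i})<0 .$$
This is the bound of Remark \ref{rem--basic-intnum-func}.

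For integrality, I fix $m\in\mathbb{Z}_{>0}$ with every $mB_{i}$ a Weil divisor; this is possible because the $B_{i}$ are $\mathbb{Q}$-divisors. Take $\ell$ as in Theorem \ref{thm--mmp-Cartierindex-bound}. Then $\ell m(K_{X'}+\phi_{*}B_{i})$ is Cartier on $X'$, so $\zeta_{X',R'}(K_{X}+B_{i})\in\frac{1}{\ell m}\mathbb{Z}$. This uses that the birational transform of $K_{X}+B_{i}$ on $X'$ is $K_{X'}+\phi_{*}B_{i}$, consistent with Definition \ref{defn--intnum-func}. Consequently each coordinate lies in the finite set $\frac{1}{\ell m}\mathbb{Z}\cap[-2\,{\rm dim}\,X,0)$, and the tuples form a finite set, which finishes the proof.

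The main obstacle is the spanning step. The polytope $\mathcal{C}$ is full-dimensional, but the $K_{X}+B_{i}$ must span linearly, not just affinely. This is handled by observing that translating by $K_{X}$ does not change affine dimension, and that an affine hull equal to the whole space contains a linear basis. A second point to check is that $\zeta_{X',R'}$ is genuinely linear and compatible with birational transforms of $\mathbb{Q}$-divisors that are not $\mathbb{R}$-Cartier-preserved in general. Here I rely on Definition \ref{defn--intnum-func} and the fact that all relevant divisors on $X'$ are $\mathbb{Q}$-Cartier, since $X'$ is $\mathbb{Q}$-factorial.
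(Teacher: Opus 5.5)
Your proposal is correct and follows the paper's proof essentially verbatim. Corollary \ref{cor--mmp-boundary-polytope} shows that the classes $K_{X}+B_{i}$ span $N^{1}(X)_{\mathbb{R}}$, and each value $\zeta_{X',R'}(K_{X}+B_{i})$ lies in the finite set $\frac{1}{\ell m}\mathbb{Z}\cap[-2\,{\rm dim}\,X,0)$ by the length of extremal rays and Theorem \ref{thm--mmp-Cartierindex-bound}. Your explicit check that $(X',\phi_{*}B_{i})$ is klt, which is needed to apply the bound in Remark \ref{rem--basic-intnum-func}, is a point the paper leaves implicit.
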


\begin{proof}
By Corollary \ref{cor--mmp-boundary-polytope}, there exist finitely many effective $\mathbb{Q}$-divisors $B_{1},\,\cdots,\, B_{p}$ on $X$ such that if we put $\mathcal{C} \subset N^{1}(X)_{\mathbb{R}}$ as the rational polytope spanned by $B_{1},\,\cdots,\, B_{p}$, then 
\begin{itemize}
\item
$(X,B_{i})$ is klt for every $1 \leq i \leq p$, 
\item
${\rm dim}\, \mathcal{C} =\rho(X)$, and
\item
for any klt pair $(X,B)$ such that $B \in \mathcal{C}$ as a numerical class, any sequence of steps of a $(K_{X}+\Delta)$-MMP $\phi \colon (X,\Delta) \dashrightarrow (X',\Delta')$, and any $(K_{X'}+\Delta')$-negative extremal ray $R'$ of $\overline{\rm NE}(X')$, we have $(K_{X'}+\phi_{*}B)\cdot R'<0$. 
\end{itemize}
By the second property and the $\mathbb{Q}$-factoriality of $X$, for any $\mathbb{R}$-divisor $D$ on $X$, we can find real numbers $r_{1},\,\cdots,\, r_{p}$ such that $\sum_{i=1}^{p}r_{i}=1$ and $D-K_{X} \equiv \sum_{i=1}^{p}r_{i}B_{i}$. 
Then we have $D \equiv \sum_{i=1}^{p}r_{i}(K_{X}+B_{i})$. 
By Definition \ref{defn--intnum-func}, we have
$$\zeta_{X',R'}(D)=\sum_{i=1}^{p}r_{i}\zeta_{X',R'}(K_{X}+B_{i}).$$
Hence, $\zeta_{X',R'}(D)$ is determined by $\zeta_{X',R'}(K_{X}+B_{i})$ $(1 \leq i \leq p)$. 
From this fact, we only have to show that for each $1 \leq i \leq p$, the set
\begin{equation*}
\mathcal{S}_{i}:=\left\{\zeta_{X',R'}(K_{X}+B_{i})\,\,\middle|
%\begin{array}{l}\text{$\phi \colon (X,\Delta) \dashrightarrow (X',\Delta')$ is a $(K_{X}+\Delta)$-MMP and}\\\text{$R'$ is a $(K_{X'}+\Delta')$-negative extremal ray of $\overline{\rm NE}(X')$}\end{array}
\begin{array}{l}\text{$(X,\Delta) \dashrightarrow (X',\Delta')$ is a $(K_{X}+\Delta)$-MMP,}\\\text{$R'$ is a $(K_{X'}+\Delta')$-negative extremal ray}\end{array}
\right\}
\end{equation*}
is a finite set. 

Let $\ell$ be the positive integer as in Theorem \ref{thm--mmp-Cartierindex-bound}, and let $\ell'$ be a positive integer such that all $\ell' B_{i}$ are Weil divisors on $X$. 
By the third property stated above, for every $1 \leq i \leq p$, any $(K_{X}+\Delta)$-MMP $\phi \colon (X,\Delta) \dashrightarrow (X',\Delta')$ is also a $(K_{X}+B_{i})$-MMP, $R'$ is a $(K_{X'}+\phi_{*}B_{i})$-negative extremal ray, and the divisor $\ell\ell'(K_{X'}+\phi_{*}B_{i})$ is Cartier. 
By Remark \ref{rem--basic-intnum-func}, we see that 
$$\zeta_{X',R'}(K_{X}+B_{i}) \in [-2 \cdot {\rm dim}\,X,0) \cap \frac{1}{\ell\ell'}\mathbb{Z}$$
and the right hand side is a finite set and it does not depend on $X'$ and $R'$. 
Therefore, $\mathcal{S}_{i}$ is a finite set, and the set of $p$-tuples $\big\{\big(\zeta_{X',R'}(K_{X}+B_{1}),\,\cdots,\,\zeta_{X',R'}(K_{X}+B_{p})\big)\big\}_{X',R'}$ is also a finite set. 
Since $\zeta_{X',R'}$ is determined by $\zeta_{X',R'}(K_{X}+B_{i})$ $(1 \leq i \leq p)$, which was discussed above, we see that $\{\zeta_{X',R'}\}_{X',R'}$ is a finite set.
Theorem \ref{thm--finite-intnum-func} follows from this fact. 
\end{proof}

\begin{thm}\label{thm--finite-div-cont-mmp}
Let $(X,\Delta)$ be a projective $\mathbb{Q}$-factorial klt pair such that $K_{X}+\Delta$ or $\Delta$ is big. 
Then there exists a finite set $\mathcal{Q}$ of prime divisors on $X$ satisfying the following. 
Let $(X,\Delta) \dashrightarrow (X',\Delta')$ be an arbitrary sequence of steps of a $(K_{X}+\Delta)$-MMP and $P$ a prime divisor on $X$ contracted by $X \dashrightarrow X'$. 
Then $P \in \mathcal{Q}$. 
\end{thm}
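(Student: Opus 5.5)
The approach is to use Theorem \ref{thm--finite-intnum-func} to reduce to finitely many extremal-ray functions, and to show that each such function can contract at most one prime divisor of $X$. Fix the finitely many pairs $(Y_j,R_{Y_j})$, $1\le j\le q$, given by Theorem \ref{thm--finite-intnum-func}. Let $(X,\Delta)\dashrightarrow (X',\Delta')$ be any sequence of steps of a $(K_X+\Delta)$-MMP, and let $P$ be a prime divisor on $X$ contracted by it. Then $P$ is contracted at some particular step $X_k\dashrightarrow X_{k+1}$. That step is a divisorial contraction $X_k\to V$ with exceptional divisor $P_k$, the birational transform of $P$. It is defined by a $(K_{X_k}+\Delta_k)$-negative extremal ray $R$, and $(X,\Delta)\dashrightarrow (X_k,\Delta_k)$ is itself a sequence of steps of the MMP. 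Hence $\zeta_{X_k,R}=\zeta_{Y_j,R_{Y_j}}$ for some $j$. It therefore suffices to show that the function $\zeta_{X_k,R}$ determines $P$ whenever $R$ is a divisorial ray whose exceptional divisor is the transform of a divisor of $X$. Then $\mathcal Q$ can be taken to be the set of those divisors attached to the indices $j$ for which $R_{Y_j}$ is of this type.

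To recover $P$ from $\zeta:=\zeta_{X_k,R}$, I will look at the set of prime divisors $D$ on $X$ with $\zeta(D)<0$. Here $X$ is $\mathbb Q$-factorial, so $\zeta$ is defined on every prime divisor. Since $R$ contracts $P_k$ and $\rho(X_k/V)=1$, we have $P_k\cdot C_R<0$, where $C_R$ is the minimal curve on $R$, and every curve on $R$ lies in $P_k$. If $D\neq P$ is a prime divisor on $X$, then its transform $D_{X_k}$ is a prime divisor different from $P_k$; it is nonzero because $D$ is not contracted before step $k$, or else it is zero. In either case $D_{X_k}\cdot C_R\ge 0$, because $C_R$ moves in a family covering $P_k$ and so is not contained in $D_{X_k}$; more precisely, $R$ is spanned by curves covering $P_k$, and $D_{X_k}\not\supset P_k$. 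The step I expect to need the most care is handling the case where the chosen minimal curve $C_R$ happens to lie in $D_{X_k}$. To get around it, I will use that $\zeta$ depends only on the class of $R$: $D_{X_k}\cdot R$ can be computed with a curve in the covering family, a general one of which is not contained in $D_{X_k}$.

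Consequently $P$ is the unique prime divisor on $X$ with $\zeta(P)<0$, so the function $\zeta$ determines $P$. Taking $\mathcal Q$ to be the set of all such uniquely determined divisors over the finitely many functions $\zeta_{Y_j,R_{Y_j}}$ gives a set of at most $q$ elements containing every contracted $P$, which proves Theorem \ref{thm--finite-div-cont-mmp}.
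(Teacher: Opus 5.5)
Your proof is correct and follows essentially the same route as the paper. In both, the extremal-ray function of the divisorial contraction of $P$ is negative on $P$ and nonnegative on every other prime divisor of $X$, so distinct contracted divisors give distinct functions, and Theorem~\ref{thm--finite-intnum-func} then bounds how many there can be; the paper phrases this as a contradiction with an infinite family $\{P_i\}$, while you give the direct bound $\#\mathcal{Q}\le q$ and justify more carefully why $\zeta(D)\ge 0$ for $D\neq P$.
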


\begin{proof}
Suppose for contradiction that there exists an infinite set of prime divisors $\{P_{i}\}_{i=1}^{\infty}$ with corresponding sequences of $(K_{X}+\Delta)$-MMP
$$(X,\Delta) \dashrightarrow (X^{(i)},\Delta^{(i)})$$
so that the final step $\tilde{X}^{(i)} \dashrightarrow X^{(i)}$ of each $X \dashrightarrow X^{(i)}$ is the divisorial contraction of $P_{i}$. 
For each $i$, let $R_{i} \subset \overline{\rm NE}(\tilde{X}^{(i)})$ be the extremal ray that defines $\tilde{X}^{(i)} \to X^{(i)}$.
Then the function $\zeta_{\tilde{X}^{(i)},R_{i}}$ defined in  Definition \ref{defn--intnum-func} satisfies $\zeta_{\tilde{X}^{(i)},R_{i}}(P_{i})<0$ and $\zeta_{\tilde{X}^{(i)},R_{i}}(Q)\geq 0$ for any prime divisor $Q \neq P_{i}$ on $X$. 
This shows that $\{\zeta_{\tilde{X}^{(i)},R_{i}}\}_{i=1}^{\infty}$ is an infinite set. 
However, it contradicts Theorem \ref{thm--finite-intnum-func}. 
Hence, Theorem \ref{thm--finite-div-cont-mmp} holds. 
\end{proof}

\begin{thm}\label{thm--two-mfs-small}
Let $(X,\Delta)$ be a projective $\mathbb{Q}$-factorial klt pair such that $\Delta$ is big and $K_{X}+\Delta$ is not pseudo-effective. 
Let 
$$(X,\Delta) \dashrightarrow (X',\Delta') \overset{\pi'}{\longrightarrow} Z' \qquad {\rm and} \qquad (X,\Delta) \dashrightarrow (X'',\Delta'') \overset{\pi''}{\longrightarrow} Z''$$ be two sequences of steps of a $(K_{X}+\Delta)$-MMP with Mori fiber spaces $\pi' \colon (X',\Delta') \to Z'$ and $\pi''\colon (X'',\Delta'') \to Z''$, respectively. 
Let $R' \subset \overline{\rm NE}(X')$ and $R'' \subset \overline{\rm NE}(X'')$ be extremal rays that define $\pi'$ and $\pi''$, respectively. 
Suppose that 
\begin{itemize}
\item
the induced birational map $\phi \colon (X',\Delta') \dashrightarrow (X'',\Delta'')$ is small, and
\item
$\zeta_{X',R'}=c\zeta_{X'',R''}$ as functions (see Definition \ref{defn--intnum-func}) for some $c \in \mathbb{R}_{>0}$. 
\end{itemize}
Then there exists a small birational map $\psi \colon Z' \dashrightarrow Z''$ such that $\pi'' \circ \phi=\psi \circ \pi'$. 
\end{thm}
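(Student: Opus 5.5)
The plan is to compare the two fibrations through the kernels of the extremal-ray functions, and then descend the small map $\phi$ to the bases. Since $X'$ and $X''$ are $\mathbb{Q}$-factorial and $\phi$ is small, $\phi_{*}$ gives an isomorphism $N^{1}(X')_{\mathbb{R}}\cong N^{1}(X'')_{\mathbb{R}}$. The maps $X\dashrightarrow X'$ and $X\dashrightarrow X''$ induce surjections from $N^{1}(X)_{\mathbb{R}}$ (by the negativity lemma, as in Definition \ref{defn--intnum-func}), and these are compatible with $\phi_{*}$.

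First I will identify the kernels. Because $\rho(X'/Z')=1$, the cone/contraction theorem for the klt pair $(X',\Delta')$ applies, with $-(K_{X'}+\Delta')$ ample over $Z'$. It shows that an $\mathbb{R}$-Cartier divisor $D'$ on $X'$ with $D'\cdot R'=0$ satisfies $D'\sim_{\mathbb{R}}\pi'^{*}D_{Z'}$ for some $\mathbb{R}$-Cartier $D_{Z'}$ on $Z'$ (cf.~\cite[Theorem 3.7]{kollar-mori}). Moreover $Z'$ is $\mathbb{Q}$-factorial, and the same holds on the $X''$ side. Hence the kernel of $\zeta_{X',R'}$, viewed on $N^{1}(X')_{\mathbb{R}}$, is exactly $\pi'^{*}N^{1}(Z')_{\mathbb{R}}$. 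The hypothesis $\zeta_{X',R'}=c\,\zeta_{X'',R''}$ with $c>0$ therefore gives
$$\phi_{*}\pi'^{*}N^{1}(Z')_{\mathbb{R}}=\pi''^{*}N^{1}(Z'')_{\mathbb{R}}.$$
Positivity of $c$ also shows that the $\pi'$-ample side corresponds to the $\pi''$-ample side.

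Next I construct $\psi$. Take an ample $\mathbb{Q}$-divisor $A''$ on $Z''$ and let $D'=\phi^{-1}_{*}\pi''^{*}A''$. By the previous step, $D'\sim_{\mathbb{Q}}\pi'^{*}A'$ for some $\mathbb{Q}$-Cartier $A'$ on $Z'$. Since $\phi$ is small and the $\pi$'s are contractions, we get
$$H^{0}(Z'',mA'')\cong H^{0}(X'',m\pi''^{*}A'')\cong H^{0}(X',mD')\cong H^{0}(Z',mA')$$
for divisible $m$. Hence $Z''={\rm Proj}\bigoplus_{m}H^{0}(Z',mA')$, and the rational map $\psi\colon Z'\dashrightarrow Z''$ defined by $|mA'|$ satisfies $\pi''\circ\phi=\psi\circ\pi'$. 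Running the same construction with the roles of $Z'$ and $Z''$ exchanged gives $\psi'\colon Z''\dashrightarrow Z'$. The compositions $\psi'\circ\psi$ and $\psi\circ\psi'$ are compatible with $\phi^{-1}\circ\phi={\rm id}$ through the dominant maps $\pi'$ and $\pi''$, so they are the identity and $\psi$ is birational.

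Finally, smallness. Let $P$ be a prime divisor on $Z'$. Then $\pi'^{*}P$ is a prime divisor, since the fibres are connected and the relative Picard number is one. Because $\phi$ is small, its transform $E$ on $X''$ is a prime divisor whose class lies in $\pi''^{*}N^{1}(Z'')_{\mathbb{R}}$. Hence $E\cdot R''=0$, so $E$ is $\pi''$-vertical, and by the kernel description $E=\pi''^{*}P''$ for a prime divisor $P''$ on $Z''$; this $P''$ is the transform of $P$ by $\psi$. By symmetry, $\psi$ and $\psi^{-1}$ contract no divisors, so $\psi$ is small.

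I expect the main obstacle to be this last step and the birationality of $\psi$. One must show rigorously that a prime divisor numerically trivial over $Z''$ is the full pullback of a prime divisor rather than a proper component of one. I would argue this with $\rho(X''/Z'')=1$: a proper component $E_{1}$ of a reducible pullback $\pi''^{*}P''$ cannot be $\pi''$-numerically trivial, because by the negativity lemma for the vertical divisor $E_{1}$ it would then be a multiple of $\pi''^{*}P''$.
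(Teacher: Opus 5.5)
Your proof is correct, but it constructs $\psi$ differently from the paper. Your first step is essentially the paper's: the kernels of $\zeta_{X',R'}$ and $\zeta_{X'',R''}$ are identified with $\pi'^{*}N^{1}(Z')_{\mathbb{R}}$ and $\pi''^{*}N^{1}(Z'')_{\mathbb{R}}$, so $\phi$ matches divisors pulled back from the two bases. Your last step, tracking the generic point of a vertical prime divisor to get smallness, is also the paper's.

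The difference is in how $\psi$ is produced and shown to be birational.

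\begin{itemize}
\item The paper works on a common resolution $f'\colon W\to X'$, $f''\colon W\to X''$. With $H'=\pi'^{*}A_{Z'}$ and $H''=\phi_{*}H'$, the negativity lemma gives $f''^{*}H''=f'^{*}H'+E_{W}$ with $E_{W}\ge 0$ exceptional. A stable base locus computation shows $E_{W}$ is vertical over $Z''$. Restricting to a general fibre of $W\to Z''$ shows that fibre is contracted to a point of $Z'$. Finally, the Stein factorization $V$ of $W\to Z'\times Z''$ is shown to be birational over both bases.
\item You instead identify the section rings $\bigoplus_{m}H^{0}(Z'',mA'')\cong\bigoplus_{m}H^{0}(Z',mA')$. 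This uses only that $\phi$ is small and that $\pi'_{*}\mathcal{O}_{X'}=\mathcal{O}_{Z'}$ and $\pi''_{*}\mathcal{O}_{X''}=\mathcal{O}_{Z''}$. Then $\psi$ is the map given by $|mA'|$, and $\pi''\circ\phi=\psi\circ\pi'$ holds by construction. Birationality follows from the symmetric construction and the injectivity of $\pi'^{*}$ on function fields; equivalently, $\pi'^{*}K(Z')=\pi''^{*}K(Z'')$ inside $K(X')$.
\end{itemize}

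Your route is shorter and avoids resolutions, base loci and Stein factorizations. The paper's route additionally produces an explicit common model $V$ dominating both bases birationally, and gives geometric control of the fibres of $W$ over $Z''$.

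Three small corrections.
\begin{itemize}
\item $\pi'^{*}P$ is in general only a positive rational multiple $aQ$ of a prime divisor $Q$ (the paper writes $Q'=\alpha\pi'^{*}P'$). Likewise you only get $E=\beta\pi''^{*}P''$ with $\beta>0$.
\item To evaluate $\psi$ at the generic point of $P$, say explicitly that $\psi$ is defined in codimension one, because $Z'$ is normal and $Z''$ is projective.
\item The negativity argument you sketch for vertical divisors must also rule out $\pi''(E)$ having codimension $\ge 2$, not only $E$ being a proper component of a reducible pullback. The same very-exceptional negativity argument does this; the paper treats the point equally briefly.
\end{itemize}
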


\begin{proof}
Let $D'$ be an arbitrary $\mathbb{Q}$-divisor on $X'$, and let $D$ be the $\mathbb{Q}$-divisor on $X$ whose birational transform on $X'$ is $D'$. 
Then $\zeta_{X',R'}(D)=0$ if and only if $(D'\cdot R')=0$. 
Since $\pi' \colon X' \to Z'$ is a Mori fiber space, $(D'\cdot R')=0$ if and only if $D' \sim_{\mathbb{Q}} \pi'^{*}L_{Z'}$ for some $\mathbb{Q}$-divisor $L_{Z'}$ on $Z'$, and the same equivalence holds for $\phi_{*}D'$. 
Since $\zeta_{X',R'}=c\zeta_{X'',R''}$ with $c \in \mathbb{R}_{>0}$, it follows that $D' \sim_{\mathbb{Q}} \pi'^{*}L_{Z'}$ for some $\mathbb{Q}$-divisor $L_{Z'}$ on $Z'$ if and only if $\phi_{*}D' \sim_{\mathbb{Q}} \pi''^{*}L_{Z''}$ for some $\mathbb{Q}$-divisor $L_{Z''}$ on $Z''$. 

Let $f' \colon W \to X'$ and $f'' \colon W \to X''$ be a common resolution of $\phi$. 
Now we have the following diagram
$$
\xymatrix{
&W\ar[dl]_{f'}\ar[dr]^{f''}\\
X'\ar@{-->}[rr]^-{\phi}\ar[d]_{\pi'}&&X''\ar[d]^{\pi''}\\
Z'&&Z''
}
$$
Let $A_{Z'}$ be an ample $\mathbb{Q}$-divisor on $Z'$. 
We put $H':=\pi'^{*}A_{Z'}$ and $H'':=\phi_{*}H'$. 
By the negativity lemma, there exists an effective $f''$-exceptional $\mathbb{Q}$-divisor $E_{W}$, which is also $f'$-exceptional since $\phi$ is small, on $W$ such that
$$E_{W}+f'^{*}H' =f''^{*}H''.$$
By the above argument, we have $H''\sim_{\mathbb{Q}} \pi''^{*}A_{Z''}$ for some $\mathbb{Q}$-divisor $A_{Z''}$ on $Z''$. 
Since $H'$ is semi-ample and $E_{W}$ is $f'$-exceptional, we have
$${\rm Supp}\,E_{W}={\rm Bs}|E_{W}+f'^{*}H'|_{\mathbb{Q}} = {\rm Bs}|f''^{*}H''|_{\mathbb{Q}}=(\pi'' \circ f'')^{-1}({\rm Bs}|A_{Z''}|_{\mathbb{Q}}).$$
Therefore, $E_{W}$ is vertical over $Z''$. 
Let $F$ be a general fiber of $\pi'' \circ f'' \colon W \to Z''$. 
Then we have $E_{W}|_{F}=0$ and $f''^{*}H''|_{F}\sim_{\mathbb{Q}}0$. 
Let $F \overset{\mu}{\longrightarrow} T \to Z'$ be the Stein factorization of the induced morphism $(\pi' \circ f')|_{F} \colon F \to Z'$, and let $A_{T}$ be the pullback of $A_{Z'}$ to $T$. 
Then 
$$\mu^{*}A_{T}=(f'^{*}\pi'^{*}A_{Z'})|_{F}\sim_{\mathbb{Q}}(E_{W}+f'^{*}H')|_{F} =f''^{*}H''|_{F}\sim_{\mathbb{Q}}0.$$
Since $A_{T}$ is ample, we have ${\rm dim}\,T=0$. 
Therefore, $(\pi' \circ f')(F)$ is a point. 

Let $h \colon W \to V$ be the Stein factorization of the induced morphism $W \to Z' \times Z''$, and let $q'\colon V \to Z'$ and $q''\colon V \to Z''$ be the structure morphisms. 
For any general point $z'' \in Z''$, the above argument shows $q'(q''^{-1}(z''))$ is a point. 
Hence, the construction of $W \overset{h}{\longrightarrow} V \overset{q''}{\longrightarrow} Z''$ implies that $q''^{-1}(z'')$ is a point for any general point $z''$. 
Therefore, $q'' \colon V \to Z''$ is birational. 
By a similar argument, we see that $q' \colon V \to Z'$ is birational. 
This shows that $\psi :=q'' \circ q'^{-1} \colon Z' \dashrightarrow Z''$ is a birational map and $\pi'' \circ \phi=\psi \circ \pi'$. 

We show that $\psi$ is small. 
Let $P'$ be a prime divisor on $Z'$, and let $Q'$ be a prime divisor on $X'$ such that $\pi'(Q')=P'$. 
Note that we have $Q'=\alpha\pi'^{*}P'$ for some $\alpha \in \mathbb{Q}_{>0}$ because $\pi' \colon X' \to Z'$ is a Mori fiber space. 
Then $Q'':=\phi_{*}Q'$ is a prime divisor on $X''$, and the discussion in the first paragraph implies that we have $Q''=\beta \pi''^{*}P''$ for some $\beta \in \mathbb{Q}_{>0}$ and prime divisor $P''$ on $Z''$. 
Let $\eta_{P'}$ (resp.~$\eta_{Q'}$, $\eta_{Q''}$, $\eta_{P''}$) be the generic point of $P'$ (resp.~$Q'$, $Q''$, $P''$). 
Then we have
$$\psi(\eta_{P'})=\psi(\pi'(\eta_{Q'}))=\pi''(\phi(\eta_{Q'}))=\pi''(\eta_{Q''})=\eta_{P''}.$$
This shows that $\psi_{*}P'=P'' \neq 0$, and therefore $\psi$ does not contract any divisor. 
By a similar argument, we see that $\psi^{-1}$ does not contract any divisors. 
Thus, $\psi$ is small. 
This completes the proof.  
\end{proof}

\section{Bases of Mori fiber spaces}\label{sec--base}

In this section we study the bases of Mori fiber spaces for projective $\mathbb{Q}$-factorial klt pairs with big boundaries.   

\begin{setup}\label{setup--base-mfs}
Throughout this section, we fix 
\begin{itemize}
\item
a projective $\mathbb{Q}$-factorial klt pair $(X,\Delta)$ such that $\Delta$ is big and $K_{X}+\Delta$ is not pseudo-effective,
\item
a sequence of steps of a $(K_{X}+\Delta)$-MMP 
$$(X,\Delta) \overset{\sigma_{0}}{\dashrightarrow} (X_{0},\Delta_{0}) \overset{\pi_{0}}{\longrightarrow}Z_{0}$$ 
terminating with a Mori fiber space $\pi_{0} \colon (X_{0},\Delta_{0}) \to Z_{0}$, and 
\item
a $(K_{X_{0}}+\Delta_{0})$-negative extremal ray $R_{0}\subset \overline{\rm NE}(X_{0})$ defining $\pi_{0}$. 
\end{itemize}
We define $\mathfrak{M}_{X}$ to be the set of pairs
$$\Bigl((X,\Delta) \overset{\sigma'}{\dashrightarrow} (X',\Delta') \overset{\pi'}{\longrightarrow}Z', R'\Bigr),$$
where $(X,\Delta) \overset{\sigma'}{\dashrightarrow} (X',\Delta') \overset{\pi'}{\longrightarrow}Z'$ is a sequence of steps of a $(K_{X}+\Delta)$-MMP terminating with a Mori fiber space $\pi' \colon (X',\Delta') \to Z'$ and  $R'$ is the $(K_{X'}+\Delta')$-negative extremal ray defining $\pi'$, such that
\begin{itemize}
\item
the induced birational map $\phi \colon X_{0} \dashrightarrow X'$ is small, and 
\item
$\zeta_{X_{0},R_{0}}=c\zeta_{X',R'}$ for some $c \in \mathbb{R}_{>0}$.
\end{itemize} 
We note that $Z_{0}$ and every $Z'$ appearing above are projective $\mathbb{Q}$-factorial varieties.
We also note that there exists a small birational map $\psi \colon Z_{0} \dashrightarrow Z'$ such that $\pi' \circ \phi=\psi \circ \pi_{0}$ by Theorem \ref{thm--two-mfs-small}. 
\end{setup}

\begin{lem}\label{lem--base-klt-threshold}
With notations as in Setup \ref{setup--base-mfs}, let $L_{Z_{0}}$ be an effective $\mathbb{R}$-divisor on $Z_{0}$. 
Then there exists $\alpha \in \mathbb{Q}_{>0}$, depending only on $\sigma_{0}$, $\pi_{0}$, and $L_{Z_{0}}$, satisfying the following. 
Pick any element 
$$\Bigl((X,\Delta) \overset{\sigma'}{\dashrightarrow} (X',\Delta') \overset{\pi'}{\longrightarrow}Z', R'\Bigr)$$ 
of $\mathfrak{M}_{X}$ with the induced small birational map $\psi \colon Z_{0} \dashrightarrow Z'$ such that $\pi' \circ \phi=\psi \circ \pi_{0}$ as in Theorem \ref{thm--two-mfs-small}. 
Then, for any $G_{Z_{0}} \in |L_{Z_{0}}|_{\mathbb{R}}$, the pair $(Z',2\alpha \psi_{*}G_{Z_{0}})$ is klt. 
\end{lem}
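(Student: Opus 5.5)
The plan is to lift $\psi_{*}G_{Z_{0}}$ to the Mori fiber spaces $X'$ and then descend klt-ness to $Z'$ by a canonical bundle formula. Uniformity in $(X',\pi')$ will come from Corollary \ref{cor--mmp-boundary-polytope}: a small perturbation of $\Delta$ in a fixed numerical class on $X$ is respected by every $(K_{X}+\Delta)$-MMP.

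\emph{Step 1: a uniform lift of $G_{Z_{0}}$ to $X'$.} Since $\phi$ is small and $\pi'\circ\phi=\psi\circ\pi_{0}$, we have $\pi'^{*}\psi_{*}G_{Z_{0}}=\phi_{*}\pi_{0}^{*}G_{Z_{0}}$; in codimension one both sides are the pullback of the $\mathbb{R}$-Cartier divisor $\psi_{*}G_{Z_{0}}$, using $\mathbb{Q}$-factoriality of $Z'$. Let $G_{X}$ be the birational transform on $X$ of $\pi_{0}^{*}G_{Z_{0}}$. Its numerical class is $\sigma_{0}^{-1}{}_{*}$ of the class of $\pi_{0}^{*}L_{Z_{0}}$, so it is fixed independently of $G_{Z_{0}}$. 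The divisors $\pi_{0}^{*}G_{Z_{0}}$ with $G_{Z_{0}}\in|L_{Z_{0}}|_{\mathbb{R}}$ lie in a fixed numerical class on $X_{0}$, so their transforms $G_{X}$ form a bounded family of effective divisors on the fixed variety $X$.

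Fix a log resolution of $(X,\Delta)$. The multiplicities of members of such a family along all centers are uniformly bounded, so there is $\alpha_{1}>0$ with $(X,\Delta+2\alpha_{1}G_{X})$ klt for every such $G_{Z_{0}}$. Shrinking $\alpha_{1}$ further, Corollary \ref{cor--mmp-boundary-polytope} says that the class of $\Delta+2\alpha_{1}G_{X}$ lies in $\mathcal{C}$, since $\Delta\in{\rm int}(\mathcal{C})$ and $G_{X}$ has a fixed class. Hence every $(K_{X}+\Delta)$-MMP $\sigma'$ is also a $(K_{X}+\Delta+2\alpha_{1}G_{X})$-MMP, and discrepancies do not decrease along it. Therefore $(X',B')$ is klt with $B':=\Delta'+2\alpha_{1}\pi'^{*}\psi_{*}G_{Z_{0}}$, and $-(K_{X'}+B')$ is ample over $Z'$ because $R'$ is $(K_{X'}+B')$-negative. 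Take $\alpha\in\mathbb{Q}_{>0}$ with $\alpha\le\alpha_{1}$; it depends only on $\sigma_{0},\pi_{0},L_{Z_{0}}$.

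\emph{Step 2: descend to $Z'$.} Since $X'$ is $\mathbb{Q}$-factorial klt, $-(K_{X'}+B')$ is $\pi'$-ample and $Z'$ is projective, there is an ample $A_{Z'}$ on $Z'$ such that $-(K_{X'}+B')+\pi'^{*}A_{Z'}$ is ample. Choose a general $M'\in|-(K_{X'}+B')+\pi'^{*}A_{Z'}|_{\mathbb{R}}$ with $(X',B'+M')$ klt and $K_{X'}+B'+M'\sim_{\mathbb{R}}\pi'^{*}A_{Z'}$. This choice is made on each $X'$ separately, and no uniformity is needed here. The canonical bundle formula (Ambro, Fujino--Gongyo) gives a klt generalized pair $(Z',D_{Z'}+\mathbf{M}_{Z'})$ with b-nef moduli part. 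Because $B'\ge 2\alpha\pi'^{*}\psi_{*}G_{Z_{0}}$ and the latter is pulled back from $Z'$, the discriminant satisfies $D_{Z'}\ge 2\alpha\psi_{*}G_{Z_{0}}$.

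By the standard perturbation argument, a klt generalized pair on a $\mathbb{Q}$-factorial base whose boundary dominates $\Gamma$ yields $(Z',\Gamma')$ klt for any effective $\Gamma'\le\Gamma$ with $K_{Z'}+\Gamma'$ $\mathbb{R}$-Cartier. Concretely, replace $\mathbf{M}$ by a general effective $\mathbb{R}$-divisor with small coefficients after adding a small ample, and then remove the effective part $D_{Z'}-2\alpha\psi_{*}G_{Z_{0}}$. This gives $(Z',2\alpha\psi_{*}G_{Z_{0}})$ klt.

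\emph{Main obstacle.} The delicate point is Step 1. One must check that the set $\{G_{X}\}$ is genuinely bounded with a uniform lower bound on log canonical thresholds. This should follow because $\pi_{0}^{*}G_{Z_{0}}$ varies in a fixed class on $X_{0}$, its transform on $X$ differs from a fixed-class divisor only by $\sigma_{0}$-exceptional divisors with bounded coefficients, and the multiplicities of effective divisors in a fixed class along finitely many divisors on a resolution are bounded. If that fails, I would instead use the klt-ness of $(X_{0},\Delta_{0}+2\alpha_{1}\pi_{0}^{*}G_{Z_{0}})$. I would then compare discrepancies through the small map $\phi$, using that the MMP from $X$ passes through the same polytope $\mathcal{C}$.
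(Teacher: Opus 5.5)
Your proposal is correct and has the same architecture as the paper's proof. You lift $G_{Z_0}$ to $X$ and get a uniform klt bound there. You then use Corollary \ref{cor--mmp-boundary-polytope} so that every $(K_X+\Delta)$-MMP is also an MMP for the perturbed boundary, which gives klt on $X'$. Finally you apply the canonical bundle formula to $\pi'$ and use that the discriminant dominates $2\alpha\psi_*G_{Z_0}$, together with $\mathbb{Q}$-factoriality of $Z'$.

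The one genuine difference is how you obtain the uniform lct bound. The paper sets $L:=f_*f_0^*\pi_0^*L_{Z_0}$ and quotes \cite[Theorem 1.8]{birkar-bab} to get ${\rm lct}(X,\Delta;|L|_{\mathbb{R}})\ge 3\alpha$. You argue directly on a fixed log resolution. Since $X$ is fixed here, your route works and is more elementary; Birkar's uniformity over bounded families is not needed, and the citation only saves writing. You should, however, spell the argument out rather than say "multiplicities along all centers are bounded". On a log resolution $g\colon\tilde X\to X$, the divisor $g^*E$ has class in a bounded set. Hence ${\rm mult}_y g^*E\le g^*E\cdot\tilde A^{n-1}\le m$ for every point $y$ and a fixed very ample $\tilde A$. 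The bound ${\rm mult}_y D<1\Rightarrow$ klt at $y$ then makes $(\tilde X,\tfrac{t}{s}g^*E)$ klt when $t/s<1/m$. Writing $\tilde\Delta^{+}+tg^*E=(1-s)\tfrac{\tilde\Delta^{+}}{1-s}+s\tfrac{t}{s}g^*E$ with $s$ small handles the boundary, and sub-klt of $(\tilde X,\tilde\Delta+tg^*E)$ follows.

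There is one inaccuracy to fix. The birational transform $G_X$ of $\pi_0^*G_{Z_0}$ does \emph{not} have a fixed numerical class when $\sigma_0$ contracts divisors. One has $f_*f_0^*\pi_0^*G_{Z_0}=G_X+\sum_j c_jE_j$, where the $E_j$ are $\sigma_0$-exceptional and the $c_j\ge 0$ depend on $G_{Z_0}$. The clean fix is the paper's: use $f_*f_0^*\pi_0^*G_{Z_0}$ itself. It lies in $|L|_{\mathbb{R}}$, has a fixed class, and still pushes forward to $\pi_0^*G_{Z_0}$ on $X_0$, hence to $\pi'^*\psi_*G_{Z_0}$ on $X'$. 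Alternatively, your observation that the $c_j$ are bounded suffices both for the lct bound and for $\Delta+2\alpha_1G_X\in\mathcal{C}$.

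Also, since $\Delta$ and $G_{Z_0}$ are $\mathbb{R}$-divisors, you need an $\mathbb{R}$-boundary version of the canonical bundle formula. The paper uses \cite[Theorem 1.7]{fujino-hashizume-adjunction}; the $\mathbb{Q}$-statements of Ambro and Fujino--Gongyo would require a reduction.
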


\begin{proof}
Let $f \colon W \to X$ and $f_{0} \colon W \to X_{0}$ be a common resolution of $\sigma_{0} \colon X \dashrightarrow X_{0}$. 
We put $L:=f_{*}f_{0}^{*}\pi_{0}^{*}L_{Z_{0}}$. 
Since $X$ is $\mathbb{Q}$-factorial, $L$ is an effective $\mathbb{R}$-Cartier divisor on $X$. 
Fix a very ample Cartier divisor $A$ on $X$ such that $A-\Delta$ and $A-L$ are pseudo-effective. 
By applying \cite[Theorem 1.8]{birkar-bab} to $(X,\Delta)$, $A$, and $L$, we can find $\alpha \in \mathbb{Q}_{>0}$ such that we have ${\rm lct}(X,\Delta;|L|_{\mathbb{R}}) \geq 3 \alpha$. 
Then, for any $E \in |L|_{\mathbb{R}}$, the pair $(X, \Delta+2\alpha E)$ is klt. 
By Corollary \ref{cor--mmp-boundary-polytope} and replacing $\alpha$ if necessary, we may assume that any $(K_{X}+\Delta)$-MMP is also a $(K_{X}+\Delta+2\alpha E)$-MMP for every $E \in |L|_{\mathbb{R}}$. 

From now on, we show that this $\alpha$ satisfies the condition of Lemma \ref{lem--base-klt-threshold}. 
Pick any element 

$$\Bigl((X,\Delta) \overset{\sigma'}{\dashrightarrow} (X',\Delta') \overset{\pi'}{\longrightarrow}Z', R'\Bigr)$$ 
of $\mathfrak{M}_{X}$ with the induced small birational map $\psi \colon Z_{0} \dashrightarrow Z'$ such that $\pi' \circ \phi=\psi \circ \pi_{0}$ as in Theorem \ref{thm--two-mfs-small}. 
Let $c$ be a positive real number such that $\zeta_{X_{0},R_{0}}=c\zeta_{X',R'}$. 
%Let $g \colon \tilde{W} \to W$ and $g' \colon \tilde{W} \to X'$ be a common resolution of $W \dashrightarrow X'$. 
Now we have the following diagram
$$
\xymatrix{
&W\ar[dl]_{f}\ar[dr]^{f_{0}}\\%&&\tilde{W}\ar[ll]_-{g}\ar[dr]^{g'}\\
X\ar@{-->}[rr]^-{\sigma_{0}}&&X_{0}\ar@{-->}[rr]^-{\phi}\ar[d]_{\pi_{0}}&&X'\ar[d]^{\pi'}\\
&&Z_{0}\ar@{-->}[rr]_-{\psi}&&Z'
}
$$
Pick any element $G_{Z_{0}} \in |L_{Z_{0}}|_{\mathbb{R}}$. 
We put 
$$G_{Z'} :=\psi_{*}G_{Z_{0}}, \qquad G' :=\pi'^{*}G_{Z'} \qquad {\rm and} \qquad G:=f_{*}f_{0}^{*}\phi^{-1}_{*}G'.$$ 
Since $\phi$ and $\psi$ are small and the images of all vertical prime divisors with respect to $\pi_{0}$ and $\pi'$ are prime divisors, we have $\phi^{-1}_{*}G'=\pi_{0}^{*}G_{Z_{0}}$. 
Then $G \in |L|_{\mathbb{R}}$, and $(X, \Delta+2\alpha G)$ is klt by the above argument. 
%Put $G':=\sigma'_{*}G=\phi_{*}\sigma_{0*}G$. By construction, we have $G' =\pi'^{*}G_{Z'}$. 
Moreover, $(X',\Delta'+2\alpha G')$ is klt since any $(K_{X}+\Delta)$-MMP is also a $(K_{X}+\Delta+2\alpha G)$-MMP, which was discussed above. 

We pick a sufficiently general $\pi'$-ample $\mathbb{R}$-divisor $H' \sim_{\mathbb{R},\,Z'}-(K_{X'}+\Delta'+2\alpha G')$, and we consider the klt-trivial fibration $\pi' \colon (X',\Delta'+2\alpha G'+H') \to Z'$. 
By the canonical bundle formula for $\mathbb{R}$-boundary divisors \cite[Theorem 1.7]{fujino-hashizume-adjunction}, we get a generalized klt pair $(Z',B'+M')$ on $Z'$ with the nef part $\boldsymbol{\rm M}'$ (cf.~\cite{bz}). 
Since $Z'$ is $\mathbb{Q}$-factorial, $(Z',B')$ is klt. 
Moreover we have $B' \geq 2\alpha G_{Z'}$. 
Indeed, for any prime divisor $P'$ on $Z'$, the lc threshold of $\pi'^{*}P'$ with respect to $(X',\Delta'+2\alpha G'+H')$ over the generic point of $P'$, which we denote by $t_{P'}$, is not greater than $1- {\rm coeff}_{P'}(2\alpha G_{Z'})$. 
Then 
$${\rm coeff}_{P'}(2\alpha G_{Z'}) \leq 1-t_{P'}={\rm coeff}_{P'}(B').$$
Thus,  $B' \geq 2\alpha G_{Z'}$, and therefore $(Z',  2\alpha G_{Z'})$ is klt. 
This completes the proof. 
\end{proof}

\begin{cor}\label{cor--form-small-family}
With notations as in Setup \ref{setup--base-mfs}, we consider the set
$$\mathfrak{F}:=\left\{\psi \colon Z_{0} \dashrightarrow Z' \,\middle|\, \Bigl((X,\Delta) \overset{\sigma'}{\dashrightarrow} (X',\Delta') \overset{\pi'}{\longrightarrow}Z', R'\Bigr) \in \mathfrak{M}_{X},\, \pi' \circ \phi=\psi \circ \pi_{0}\right\}.$$
Then the following properties hold.
\begin{enumerate}[(i)]
\item\label{cor--form-small-family-(i)}
every $\psi \colon Z_{0} \dashrightarrow Z' \in \mathfrak{F}$ is a small birational map and $Z'$ is a projective $\mathbb{Q}$-factorial variety, and 
\item\label{cor--form-small-family-(ii)}
for any effective $\mathbb{R}$-divisor $L_{Z_{0}}$ on $Z_{0}$, there exists $\alpha \in \mathbb{Q}_{>0}$ such that for every element $\psi \colon Z_{0} \dashrightarrow Z' \in \mathfrak{F}$, the pair $(Z',2\alpha \psi_{*}G_{Z_{0}})$ is klt for all $G_{Z_{0}} \in |L_{Z_{0}}|_{\mathbb{R}}$. 
\end{enumerate}
\end{cor}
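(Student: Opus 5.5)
This corollary only repackages Theorem \ref{thm--two-mfs-small}, the remark at the end of Setup \ref{setup--base-mfs}, and Lemma \ref{lem--base-klt-threshold}, so the plan is to check (i) and (ii) separately against those results.

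For (i), I take an element $\psi \colon Z_{0} \dashrightarrow Z'$ of $\mathfrak{F}$, coming from some
$$\Bigl((X,\Delta) \overset{\sigma'}{\dashrightarrow} (X',\Delta') \overset{\pi'}{\longrightarrow}Z', R'\Bigr) \in \mathfrak{M}_{X}.$$
By the definition of $\mathfrak{M}_{X}$, the map $\phi\colon X_{0}\dashrightarrow X'$ is small and $\zeta_{X_{0},R_{0}}=c\zeta_{X',R'}$ for some $c>0$. These are exactly the hypotheses of Theorem \ref{thm--two-mfs-small}, applied to the two Mori fiber spaces $(X_{0},\Delta_{0})\to Z_{0}$ and $(X',\Delta')\to Z'$. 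That theorem gives a small birational map $Z_{0}\dashrightarrow Z'$ commuting with $\pi_{0}$, $\pi'$ and $\phi$.

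This map is necessarily our $\psi$. Indeed, $\pi_{0}$ is surjective, so the identity $\pi'\circ\phi=\psi\circ\pi_{0}$ determines $\psi$ as a rational map. Hence $\psi$ is small.

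It remains to see that $Z'$ is projective and $\mathbb{Q}$-factorial. Projectivity holds because $Z'$ is the base of a contraction of a projective variety. $\mathbb{Q}$-factoriality is the standard fact that the base of a $K$-negative extremal contraction of fiber type from a $\mathbb{Q}$-factorial variety is $\mathbb{Q}$-factorial (Koll\'ar--Mori, Cor.~3.18), already noted in Setup \ref{setup--base-mfs}. Here $X'$ is $\mathbb{Q}$-factorial because $X$ is and each MMP step preserves $\mathbb{Q}$-factoriality.

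For (ii), I fix an effective $\mathbb{R}$-divisor $L_{Z_{0}}$ and let $\alpha\in\mathbb{Q}_{>0}$ be the number produced by Lemma \ref{lem--base-klt-threshold}. That $\alpha$ depends only on $\sigma_{0}$, $\pi_{0}$ and $L_{Z_{0}}$, not on the element of $\mathfrak{M}_{X}$. For any $\psi\in\mathfrak{F}$, choose an element of $\mathfrak{M}_{X}$ inducing it. The lemma then gives that $(Z',2\alpha\psi_{*}G_{Z_{0}})$ is klt for every $G_{Z_{0}}\in|L_{Z_{0}}|_{\mathbb{R}}$. Since the same $\alpha$ works for every member of $\mathfrak{F}$, this is exactly (ii).

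There is no real obstacle. The only point needing care is that the $\psi$ in the definition of $\mathfrak{F}$ coincides with the map supplied by Theorem \ref{thm--two-mfs-small}, and that is handled by the uniqueness observation in (i).
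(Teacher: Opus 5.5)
Your proposal is correct and follows the paper's proof, which simply cites Theorem \ref{thm--two-mfs-small} for (i) and Lemma \ref{lem--base-klt-threshold} for (ii), with projectivity and $\mathbb{Q}$-factoriality of $Z'$ taken from the remark in Setup \ref{setup--base-mfs}; your observation that $\psi$ is uniquely determined by $\pi'\circ\phi=\psi\circ\pi_{0}$, because $\pi_{0}$ is dominant, makes explicit a point the paper leaves implicit. One small precision: $Z'$ is projective because it is the target of an extremal contraction, given by a semi-ample divisor via the contraction theorem, and not merely because it is the base of a contraction from a projective variety, which on its own would not guarantee projectivity.
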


\begin{proof}
This corollary immediately follows from Theorem \ref{thm--two-mfs-small} and Lemma \ref{lem--base-klt-threshold}. 
\end{proof}

\section{Inductive procedure}\label{sec--ind}

In this section we prove the key statement to prove the main results. 

Motivated by Setup~\ref{setup--base-mfs} and Corollary~\ref{cor--form-small-family}, we introduce the following abstract setting.

\begin{defn}\label{defn--ind-small-family}
Let $Y_{0}$ be a projective $\mathbb{Q}$-factorial variety. 
Let $\mathfrak F_{Y_0}$ be a set consisting of small birational maps
$$\psi:Y_0\dashrightarrow Y$$
to projective \(\mathbb Q\)-factorial varieties $Y$. %We assume the following condition (\ref{defn--ind-small-family-(*)}).
%\begin{equation*}\mathfrak{F}_{Y_{0}}:=\left\{\psi \colon Y_{0} \dashrightarrow Y\,\,\,\middle|\begin{array}{l}\text{$\psi \colon Y_{0} \dashrightarrow Y$ is a small birational map}\\ \text{to a projective $\mathbb{Q}$-factorial variety $Y$} \end{array}\right\}.\end{equation*}
We assume ${\rm id}_{Y_{0}} \in \mathfrak{F}_{Y_{0}}$ and the following condition:
\begin{equation*}\tag{$*$}\label{defn--ind-small-family-(*)}
\begin{split}
&\text{For any effective $\mathbb{R}$-divisor $L_{Y_{0}}$ on $Y_{0}$, there exists $\alpha \in \mathbb{Q}_{>0}$ such that the pair}\\
&\text{$(Y,2\alpha \psi_{*}G_{Y_{0}})$ is klt for every $G_{Y_{0}} \in |L_{Y_{0}}|_{\mathbb{R}}$ and $\psi \colon Y_{0} \dashrightarrow Y \in \mathfrak{F}_{Y_{0}}$.} 
\end{split}
\end{equation*}
We note that (\ref{defn--ind-small-family-(*)}) is preserved after replacing the source variety by any member of the small birational family. 

In the rest of this paper, we fix this $\alpha$ for each $L_{Y_{0}}$.    
\end{defn}

\begin{lem}\label{lem--base-bound-Cartier}
With notations as in Definition \ref{defn--ind-small-family}, let $L_{Y_{0}}$ be an effective big $\mathbb{Q}$-divisor on $Y_{0}$, and let $\alpha$ be as in {\rm(}\ref{defn--ind-small-family-(*)}{\rm)}.  
Then there exists $I \in \mathbb{Z}_{>0}$, depending only on $\mathfrak{F}_{Y_{0}}$, $L_{Y_{0}}$, and $\alpha$,  satisfying the following. 
Let $\psi \colon Y_{0} \dashrightarrow Y \in \mathfrak{F}_{Y_{0}}$ be an element. 
Set $L_{Y}:=\psi_{*}L_{Y_{0}}$. 
Then, for any sequence of steps of a $(K_{Y}+\alpha L_{Y})$-MMP 
$$(Y, \alpha L_{Y}) \dashrightarrow (Y', \alpha L_{Y'})$$ 
and any Weil divisor $D_{Y'}$ on $Y'$, the multiple $ID_{Y'}$ is Cartier. 
\end{lem}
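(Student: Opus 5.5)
The plan is to deduce the lemma from a uniform version of Theorem \ref{thm--mmp-Cartierindex-bound}. As the introduction explains, the integer $\ell$ produced in \cite{han-qi-zhuang} is not really attached to the individual pair. It comes from a positive lower bound on the log canonical volumes of the klt germs occurring on the outputs of the MMP, together with the dimension. So I would first isolate the invariants of the starting pair on which $\ell$ actually depends, and then check that they are uniform over all $\psi\colon Y_{0}\dashrightarrow Y$ in $\mathfrak{F}_{Y_{0}}$. The invariants I expect are: the dimension, the (finite, rational) coefficient set of the boundary, and a uniform klt-type margin such as an $\epsilon$-lc or lc-threshold bound against a fixed big divisor.

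\textbf{Step 1: the relevant invariants are constant in the family.} All pairs $(Y,\alpha L_{Y})$ have the same dimension. Their boundaries are birational transforms of the single $\mathbb{Q}$-divisor $\alpha L_{Y_{0}}$, so they have the same coefficients, and each $L_{Y}$ is big because $\psi$ is small. Condition (\ref{defn--ind-small-family-(*)}) gives that $(Y,\alpha L_{Y}+\alpha G_{Y})$ is klt for every $G_{Y}=\psi_{*}G_{Y_{0}}$ with $G_{Y_{0}}\in|L_{Y_{0}}|_{\mathbb{R}}$. Since $\psi$ is small, these $G_{Y}$ exhaust $|L_{Y}|_{\mathbb{R}}$. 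Hence ${\rm lct}(Y,\alpha L_{Y};|L_{Y}|_{\mathbb{R}})\geq\alpha$ uniformly in $Y$. This is exactly the kind of uniform input used in the proof of Lemma \ref{lem--base-klt-threshold} via \cite[Theorem 1.8]{birkar-bab}. In addition, the volumes ${\rm vol}(L_{Y})={\rm vol}(L_{Y_{0}})$ and ${\rm vol}(K_{Y}+\alpha L_{Y})$ are independent of $Y$, since small maps preserve sections of Weil divisors.

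\textbf{Step 2: propagate along an arbitrary MMP.} Let $(Y,\alpha L_{Y})\dashrightarrow(Y',\alpha L_{Y'})$ be a sequence of steps of a $(K_{Y}+\alpha L_{Y})$-MMP. The MMP does not decrease discrepancies. Also $|L_{Y'}|_{\mathbb{R}}$ contains the pushforwards of the elements of $|L_{Y}|_{\mathbb{R}}$, and each step is a step of a $(K+\alpha L+\alpha G)$-MMP up to a small perturbation as in Corollary \ref{cor--mmp-boundary-polytope}. Using these, I would show that every $(Y',\alpha L_{Y'}+tG_{Y'})$ stays klt for some fixed $t>0$ and all $G_{Y'}$ in a big linear system of fixed volume. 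This yields a positive lower bound, independent of $Y$, $Y'$ and the point $y'\in Y'$, on the local log canonical volume $\widehat{\rm vol}(y',Y',\alpha L_{Y'})$. The mechanism I would use is the one in \cite{han-qi-zhuang}: a general member of a big system through $y'$ with bounded multiplicity must have controlled lct, which bounds the normalized volume from below.

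\textbf{Step 3: conclude.} Apply the local result of \cite{han-qi-zhuang} that a klt germ with a fixed coefficient set and local volume at least $\delta$ has Cartier index of Weil divisors bounded by some $I(\dim,\delta)$. Set $I:=I(\dim Y_{0},\delta)$, which depends only on $\mathfrak{F}_{Y_{0}}$, $L_{Y_{0}}$ and $\alpha$.

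\textbf{Main obstacle.} I expect the main difficulty to be Step 2, namely making the lower bound on the local volume uniform over all $Y$ and all MMPs rather than only over MMPs from one fixed pair. If the volume route proves awkward, my fallback is to reduce to a single pair. I would choose finitely many $\mathbb{Q}$-divisors $G^{(k)}\in|L_{Y_{0}}|_{\mathbb{Q}}$ whose transforms, by the klt bound in (\ref{defn--ind-small-family-(*)}), make each $Y$ reachable from one $\mathbb{Q}$-factorial klt pair on $Y_{0}$ with big boundary. I would then invoke Theorem \ref{thm--mmp-Cartierindex-bound} for finitely many such pairs. The difficulty there is realising every $Y$ as an MMP output with boundary coefficients bounded independently of $Y$.
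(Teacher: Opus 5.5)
Your outline has the same shape as the paper's proof: the uniform inputs are (\ref{defn--ind-small-family-(*)}) and ${\rm vol}(L_Y)={\rm vol}(L_{Y_0})$, and these feed into the volume machinery of \cite{han-qi-zhuang}. The paper notes that $V_m=H^0(Y,\mathcal{O}_Y(m\alpha L_Y))$ is an admissible graded system. Hence $\widehat{\rm Vol}_{(Y,0)}(\alpha L_Y)\geq\alpha^{\dim Y_0}{\rm vol}(L_{Y_0})$ for every $Y$, and it then quotes \cite[Lemma 3.11 and Lemma 3.13]{han-qi-zhuang}.

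\textbf{The gap.} Your Step 2 carries all the content, and as proposed it fails. You keep $\alpha L_{Y'}$ in the boundary and want $(Y',\alpha L_{Y'}+tG_{Y'})$ klt for a fixed $t>0$, using the perturbation of Corollary \ref{cor--mmp-boundary-polytope}. Since $K_Y+\alpha L_Y+tG_Y\equiv K_Y+(\alpha+t)L_Y$, a perturbation is indeed needed for that. But the $\epsilon$ of Corollary \ref{cor--mmp-boundary-polytope} comes from Proposition \ref{prop--mmp-perturb-boundary}, whose proof uses the index $\ell$ of Theorem \ref{thm--mmp-Cartierindex-bound} for the individual pair $(Y,\alpha L_Y)$. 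So your $t$ depends on $Y$. Making it uniform over $\mathfrak{F}_{Y_0}$ requires exactly the uniform index bound you are proving; the paper uses Lemma \ref{lem--base-bound-Cartier} for precisely this in Theorem \ref{lem--base-finite-intnum-func}, so the argument is circular.

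Two smaller points are also off. Your local mechanism says little at $y'$, since a general member of a big system need not be special there. What gives a volume bound is that \emph{every} member is lc, including those vanishing to high order at $y'$. In Step 3, the relevant quantity is the local volume of the variety $Y'$, not of a pair with a fixed coefficient set.

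\textbf{The missing idea.} No perturbation is needed once you drop the boundary. For $G_Y\in|L_Y|_{\mathbb{R}}$ one has $K_Y+\alpha G_Y\sim_{\mathbb{R}}K_Y+\alpha L_Y$. So every $(K_Y+\alpha L_Y)$-MMP $\phi\colon Y\dashrightarrow Y'$ is also a $(K_Y+\alpha G_Y)$-MMP, and $(Y',\alpha\phi_*G_Y)$ stays klt. Hence for the pushed-forward system $\phi_*V_\bullet$, every $(Y',\frac{1}{m}D)$ with $D\in|\phi_*V_m|$ is lc. Moreover ${\rm vol}(\phi_*V_\bullet)={\rm vol}(V_\bullet)=\alpha^n{\rm vol}(L_{Y_0})$, where $n=\dim Y_0$.

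Now take a valuation $v$ centered at $y'\in Y'$. Lc-ness says no nonzero section of $\phi_*V_m$ has $v$-order above $mA_{Y'}(v)$. So, for $m$ sufficiently divisible, $\dim V_m\leq\dim_{\mathbb{C}}\bigl(\mathcal{O}_{Y',y'}/\mathfrak{a}_{\lfloor mA_{Y'}(v)\rfloor+1}(v)\bigr)$. Letting $m\to\infty$ gives $A_{Y'}(v)^n{\rm vol}(v)\geq\alpha^n{\rm vol}(L_{Y_0})$, that is, $\widehat{\rm vol}(y',Y')\geq\alpha^n{\rm vol}(L_{Y_0})$.

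Apply the Xu--Zhuang degree formula for local volumes under quasi-\'etale covers to index-one covers. This bounds the Cartier index of every Weil divisor at $y'$ by $n^n/(\alpha^n{\rm vol}(L_{Y_0}))$, uniformly in $Y$, $Y'$ and $y'$. This is the content packaged in the cited lemmas. Your fallback of realising every $Y$ as an output of finitely many MMPs is not needed.
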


\begin{proof}
We use log canonical volumes (\cite[Definition 3.7 and Definition 3.9]{han-qi-zhuang}). 
We consider the graded linear system 
\begin{equation*}V_{m}:=\left\{\begin{array}{ccl}H^{0}(Y,\mathcal{O}_{Y}(m\alpha L_{Y}))&\qquad&(\text{$m \alpha L_{Y}$ is a Cartier divisor})\\0&\qquad& (\text{otherwise}).\end{array}\right.\end{equation*}
%$$V_{m}:=H^{0}(Y,\mathcal{O}_{Y}(m\alpha L_{Y})).$$
Since $L_{Y}$ is big and the property of $\alpha$ implies that $(Y,2\alpha G_{Y})$ is klt for any $G_{Y} \in |L_{Y}|_{\mathbb{Q}}$, it follows that $V_{\bullet}$ is admissible (\cite[Definition 3.7]{han-qi-zhuang}). 
Then we have 
$$\widehat{\rm Vol}_{(Y,0)}(\alpha L_{Y})\geq \alpha^{{\rm dim}\,Y} \cdot{\rm vol}(L_{Y}).$$ 
Since $\psi \colon Y_{0} \dashrightarrow Y$ is small, we have ${\rm vol}(L_{Y})={\rm vol}(L_{Y_{0}})$. 
Therefore 
$$\widehat{\rm Vol}_{(Y,0)}(\alpha L_{Y})\geq \alpha^{{\rm dim}\,Y_{0}} \cdot{\rm vol}(L_{Y_{0}}).$$
Then Lemma \ref{lem--base-bound-Cartier} follows from \cite[Lemma 3.11 and Lemma 3.13]{han-qi-zhuang}.  
\end{proof}

\begin{thm}\label{lem--base-finite-intnum-func}
With notations as in Definition \ref{defn--ind-small-family}, let $L_{Y_{0}}:=\sum_{i=1}^{n}a_{i}L_{i}$ be an effective big $\mathbb{Q}$-divisor on $Y_{0}$ such that
\begin{itemize}
\item
each $L_{i}$ is an effective big $\mathbb{Q}$-divisor on $Y_{0}$ and $a_{i} \in \mathbb{Q}_{>0}$, and
\item
the rational polytope in $N^{1}(Y_{0})_{\mathbb{R}}$ spanned by $L_{1},\,\cdots,\,L_{n}$ has dimension $\rho(Y_{0})$. 
\end{itemize}
For every $\psi \colon Y_{0} \dashrightarrow Y \in \mathfrak{F}_{Y_{0}}$, we put $\Delta_{Y}:=\alpha \psi_{*}L_{Y_{0}}$, where $\alpha$ is as in {\rm(}\ref{defn--ind-small-family-(*)}{\rm)}. 
Then the set of functions in Definition \ref{defn--intnum-func}
\begin{equation*}
\mathcal{S}:=\left\{\zeta_{Y',R'}\circ \psi_{*}\,\,\middle|
%\begin{array}{l}\text{$\phi \colon (X,\Delta) \dashrightarrow (X',\Delta')$ is a $(K_{X}+\Delta)$-MMP and}\\\text{$R'$ is a $(K_{X'}+\Delta')$-negative extremal ray of $\overline{\rm NE}(X')$}\end{array}
\begin{array}{rl}
\bullet\!\!&\text{$\psi \colon Y_{0} \dashrightarrow Y \in \mathfrak{F}_{Y_{0}}$,}\\
\bullet\!\!&\text{$(Y,\Delta_{Y}) \dashrightarrow (Y', \Delta_{Y'})$ is a $(K_{Y}+ \Delta_{Y})$-MMP,}\\
\bullet\!\!&\text{$R'$ is a $(K_{Y'}+\Delta_{Y'})$-negative extremal ray}\end{array}
\right\}
\end{equation*}
is a finite set. 
\end{thm}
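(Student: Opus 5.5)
The plan is to run the argument of Theorem \ref{thm--finite-intnum-func} uniformly over the whole family $\mathfrak{F}_{Y_{0}}$. Since every $\psi$ is small, $\psi_{*}$ identifies $N^{1}(Y_{0})_{\mathbb{R}}$ with $N^{1}(Y)_{\mathbb{R}}$. Hence each $\zeta_{Y',R'}\circ\psi_{*}$ is a linear function on $N^{1}(Y_{0})_{\mathbb{R}}$.

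The polytope spanned by $L_{1},\dots,L_{n}$ has full dimension, and $Y_{0}$ is $\mathbb{Q}$-factorial. So every class $D$ in $N^{1}(Y_{0})_{\mathbb{R}}$ can be written as $D\equiv\sum_{i} r_{i}(K_{Y_{0}}+\alpha L_{Y_{0}}+\delta L_{i})$ with $\sum r_{i}=1$, for any fixed $\delta>0$. Note that $\psi_{*}K_{Y_{0}}=K_{Y}$. It therefore suffices to show, for a suitable fixed small $\delta\in\mathbb{Q}_{>0}$ and each $i$, that the values $\zeta_{Y',R'}(K_{Y}+\Delta_{Y}+\delta\psi_{*}L_{i})$ lie in a finite set independent of $(\psi,Y',R')$.

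Two inputs give this finiteness.
\begin{enumerate}[(a)]
\item \emph{Bounded denominators.} By Lemma \ref{lem--base-bound-Cartier}, a fixed $I$ makes $I D_{Y'}$ Cartier for every Weil divisor on every $Y'$ reachable by a $(K_{Y}+\alpha L_{Y})$-MMP. Choose $\ell'$ with $\ell'\delta L_{i}$ integral. Then the values lie in $\frac{1}{I\ell'}\mathbb{Z}$.
\item \emph{Uniform lower bound.} The pair $(Y',\Delta_{Y'}+\delta L_{i,Y'})$ should be klt, with $R'$ still negative for it. Remark \ref{rem--basic-intnum-func} then bounds the values in $[-2\dim Y_{0},0)$.
\end{enumerate}

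For the klt part of (b), I use that $\Delta_{Y}+\delta\psi_{*}L_{i}\le 2\alpha\psi_{*}G$ for a suitable $G\in|L_{Y_{0}}|_{\mathbb{R}}$ when $\delta\le\alpha\min a_{i}$. Indeed $\Delta_{Y}+\delta\psi_{*}L_{i}\le \alpha\psi_{*}L_{Y_{0}}+\alpha a_{i}\psi_{*}L_{i}\le 2\alpha\psi_{*}L_{Y_{0}}$, so it is dominated by a boundary that is klt by ($*$). This klt property persists along the MMP because the MMP is also a step of the perturbed MMP, as below.

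The main obstacle is the negativity part of (b): showing that a fixed $\delta$ works for all $\psi$ simultaneously, so that $R'$ stays $(K_{Y'}+\Delta_{Y'}+t\,L_{i,Y'})$-negative for all $t\in[-\delta,\delta]$. For this I will mimic Proposition \ref{prop--mmp-perturb-boundary} with uniform constants. Write $\Delta_{Y}=\sum_{j}r_{j}\Delta^{(j)}$ with $\mathbb{Q}$-boundaries $\Delta^{(j)}$ via \cite[Theorem 5.6]{han-liu-shokurov}. The weights $r_{j}$ and the denominators depend only on the coefficients of $\alpha L_{Y_{0}}$, which are the same for all $Y$. The lc condition is independent of the pair, which is exactly the point used in that proposition. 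Combining this with the index bound $I$ and the length of extremal rays gives a uniform $\alpha_{0}>0$ such that $(K_{Y'}+\Delta_{Y'})\cdot C_{R'}\le-\alpha_{0}$ for minimal curves $C_{R'}$.

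On the other side, $(K_{Y'}+\Delta_{Y'}+\delta' L_{i,Y'})\cdot C_{R'}\ge-2\dim Y_{0}$ whenever that pair is klt, by the dominance above. Linear interpolation between these two bounds yields a uniform $\delta$. The required klt property along the MMP then follows by induction on the number of steps, exactly as in Proposition \ref{prop--mmp-perturb-boundary}.

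Once the $p$-tuples of values $\zeta_{Y',R'}(K_{Y}+\Delta_{Y}+\delta\psi_{*}L_{i})$ range over a finite set, linearity determines $\zeta_{Y',R'}\circ\psi_{*}$. Hence $\mathcal{S}$ is finite.
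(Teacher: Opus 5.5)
Your overall route is the paper's. You push a full-dimensional polytope of boundaries from $Y_0$ forward by $\psi_*$, control klt-ness uniformly through $(*)$ and the bound $\le 2\alpha\psi_*L_{Y_0}$, and rerun Corollary~\ref{cor--mmp-boundary-polytope} and Theorem~\ref{thm--finite-intnum-func} with the uniform index $I$ of Lemma~\ref{lem--base-bound-Cartier}. However, the negativity step fails as you wrote it.

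Put $f(t):=(K_{Y'}+\Delta_{Y'}+tL_{i,Y'})\cdot C_{R'}$, which is affine in $t$. From $f(0)\le-\alpha_0$ and $f(\delta')\ge-2\dim Y_0$ you only get a \emph{lower} bound on the slope $L_{i,Y'}\cdot C_{R'}$. So linear interpolation gives $f(t)<0$ only on a uniform interval $[-\delta,0]$. Nothing in your two bounds stops the slope from being arbitrarily large, and then $f(\delta)>0$ for any fixed $\delta$. But your vertices $K_Y+\Delta_Y+\delta\psi_*L_i$ sit at $t=+\delta$. The klt property of $(Y',\Delta_{Y'}+\delta L_{i,Y'})$ along the MMP also depends on negativity for $t>0$, and you need that klt property to get $f(\delta')\ge-2\dim Y_0$ in the first place. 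So the interpolation proves the wrong side, and the induction never starts.

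The missing ingredient is a lower bound at a \emph{negative} parameter. This is exactly the role the klt pair $(X,G)$ plays in Proposition~\ref{prop--mmp-perturb-boundary}.
\begin{itemize}
\item Since $\Delta_{Y'}=\alpha\sum_j a_jL_{j,Y'}\ge\alpha a_iL_{i,Y'}$, for $0<s\le\alpha a_i$ the divisor $\Delta_{Y'}-sL_{i,Y'}$ is effective and dominated by $\Delta_{Y'}$. Hence $(Y',\Delta_{Y'}-sL_{i,Y'})$ is klt, because $(Y',\Delta_{Y'})$ is.
\item The length of extremal rays and the minimality of $C_{R'}$ then give $f(-s)\ge-2\dim Y_0$. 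This holds trivially if $R'$ is not negative for that pair.
\item Hence the slope is at most $2\dim Y_0/s$, and $f(t)\le-\alpha_0+2t\dim Y_0/s<0$ for $0\le t<\alpha_0 s/(2\dim Y_0)$. This is uniform in $(\psi,Y',R')$.
\item Take $\delta$ in this range and $\le\alpha\min_i a_i$. Then every step of the $(K_Y+\Delta_Y)$-MMP is $(K+\Delta+\delta L_i)$-negative, using only that $(Y',\Delta_{Y'})$ is klt. So the MMP is a $(K_Y+\Delta_Y+\delta\psi_*L_i)$-MMP.
\item Klt-ness of $(Y',\Delta_{Y'}+\delta L_{i,Y'})$ now follows from your dominance argument on $Y$, with no bootstrapping.
\end{itemize}

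With this repair your proof is complete, and it is slightly leaner than the paper's. One-sided vertices suffice, so you never need $\Delta_Y$ in the interior of the polytope. Also, $\Delta_Y$ is already a $\mathbb{Q}$-divisor, so the decomposition from \cite{han-liu-shokurov} is unnecessary: $\alpha_0=1/(I\ell')$ works once $\ell'$ clears the denominators of $\alpha L_{Y_0}$. Likewise, your $\ell'$ in (a) must clear the denominators of $\alpha L_{Y_0}+\delta L_i$, not only those of $\delta L_i$.
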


\begin{proof}
We can apply the proofs in Corollary \ref{cor--mmp-boundary-polytope} and Theorem \ref{thm--finite-intnum-func} to our situation. 

By the property of $\alpha$, the pair $(Y,2\Delta_{Y})$ is klt. 
Furthermore, the positive integer $I$ in Lemma \ref{lem--base-bound-Cartier} can be chosen independently of an element of $\mathfrak{F}_{Y_{0}}$. 
Then we may carry out the proof of Corollary \ref{cor--mmp-boundary-polytope} uniformly for all $Y$ by using the set
$$\{\tilde{\Delta}_{Y_{0}}\,|\, 0\leq \tilde{\Delta}_{Y_{0}} \leq 2\alpha L_{Y_{0}} \}$$
and its image by $\psi_{*}$. 
Thus, we can find finitely many effective $\mathbb{Q}$-divisors $\Delta_{1},\,\cdots ,\,\Delta_{p}$ on $Y_{0}$ satisfying the following: 
Let $\mathcal{C}_{Y_{0}} \subset N^{1}(Y_{0})_{\mathbb{R}}$ be the rational polytope spanned by $\Delta_{1},\,\cdots ,\,\Delta_{p}$, and put $B_{j}:=\psi_{*}\Delta_{j}$ $(1 \leq j \leq p)$ and $\mathcal{C}:=\psi_{*}\mathcal{C}_{Y_{0}}$. 
Then
\begin{itemize}
\item
$(Y,B_{j})$ is klt for every $1 \leq j \leq p$, 
\item
${\rm dim}\, \mathcal{C} =\rho(Y)$, 
\item
for any klt pair $(Y,B)$ such that $B \in \mathcal{C}$ as a numerical class, any sequence of steps of a $(K_{Y}+\Delta_{Y})$-MMP 
$$\phi \colon(Y,\Delta_{Y}) \dashrightarrow (Y',\Delta_{Y'}),$$
and any $(K_{Y'}+\Delta_{Y'})$-negative extremal ray $R'$ on $\overline{\rm NE}(Y')$, we have 
$$ (K_{Y'}+\phi_{*}B)\cdot R'<0,$$ 
in particular, any $(K_{Y}+\Delta_{Y})$-MMP is also a $(K_{Y}+B)$-MMP, and 
\item
$\Delta_{Y} \in {\rm int}(\mathcal{C})$ as a numerical class. 
\end{itemize}

Using the same $I$ as above (see Lemma \ref{lem--base-bound-Cartier}), we can apply the proof of Theorem \ref{thm--finite-intnum-func} to our situation, and Theorem \ref{lem--base-finite-intnum-func} holds.  
\end{proof}

\begin{thm}\label{lem--base-finite-cont-div}
With notations as in Definition \ref{defn--ind-small-family}, let $L_{Y_{0}}:=\sum_{i=1}^{n}a_{i}L_{i}$ be an effective big $\mathbb{Q}$-divisor on $Y_{0}$ such that
\begin{itemize}
\item
each $L_{i}$ is an effective big $\mathbb{Q}$-divisor on $Y_{0}$ and $a_{i} \in \mathbb{Q}_{>0}$, and
\item
the rational polytope in $N^{1}(Y_{0})_{\mathbb{R}}$ spanned by $L_{1},\,\cdots,\,L_{n}$ has dimension $\rho(Y_{0})$. 
\end{itemize}
For every $\psi \colon Y_{0} \dashrightarrow Y \in \mathfrak{F}_{Y_{0}}$, put $\Delta_{Y}:=\alpha \psi_{*}L_{Y_{0}}$, where $\alpha$ is as in {\rm(}\ref{defn--ind-small-family-(*)}{\rm)}. 
Then there exists a finite set $\mathcal{Q}_{0}$ of prime divisors on $Y_{0}$ satisfying the following. 
Let $\psi \colon Y_{0} \dashrightarrow Y \in \mathfrak{F}_{Y_{0}}$ be an element and let $(Y,\Delta_{Y}) \dashrightarrow (Y', \Delta_{Y'})$ be a sequence of steps of a $(K_{Y}+ \Delta_{Y})$-MMP. 
Let $P_{Y_{0}}$ be a prime divisor on $Y_{0}$ contracted by $Y_{0} \dashrightarrow Y \dashrightarrow Y'$.
Then $P_{Y_{0}} \in \mathcal{Q}_{0}$. 
\end{thm}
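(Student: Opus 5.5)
The plan is to mirror the proof of Theorem \ref{thm--finite-div-cont-mmp}, with Theorem \ref{lem--base-finite-intnum-func} taking the place of Theorem \ref{thm--finite-intnum-func}. Everything is pulled back to the fixed variety $Y_{0}$. Each $\psi \in \mathfrak{F}_{Y_{0}}$ is small and $Y_{0}$, $Y$ are $\mathbb{Q}$-factorial, so $\psi_{*}$ identifies prime divisors of $Y_{0}$ with prime divisors of $Y$ and gives a linear isomorphism $N^{1}(Y_{0})_{\mathbb{R}} \to N^{1}(Y)_{\mathbb{R}}$. Consequently every function $\zeta_{Y',R'}\circ\psi_{*}$ in $\mathcal{S}$ is a linear function on $N^{1}(Y_{0})_{\mathbb{R}}$, and it can be evaluated on prime divisors of $Y_{0}$.

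Suppose, for contradiction, that there are infinitely many distinct prime divisors $P_{i}$ on $Y_{0}$, each contracted by some composite $Y_{0} \overset{\psi_{i}}{\dashrightarrow} Y_{i} \dashrightarrow Y'_{i}$, where $Y_{i} \dashrightarrow Y'_{i}$ is a $(K_{Y_{i}}+\Delta_{Y_{i}})$-MMP. Since $\psi_{i}$ is small, $P_{i}$ is not contracted by $\psi_{i}$, so it is contracted at some step of the MMP. Truncate that MMP to the first step contracting $\psi_{i*}P_{i}$. This step is a divisorial contraction $\tilde{Y}_{i} \to Y''_{i}$ of an extremal ray $R_{i}$ that is $(K_{\tilde{Y}_{i}}+\Delta_{\tilde{Y}_{i}})$-negative, and $(Y_{i},\Delta_{Y_{i}}) \dashrightarrow (\tilde{Y}_{i},\Delta_{\tilde{Y}_{i}})$ is a sequence of MMP steps. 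Hence $\zeta_{i}:=\zeta_{\tilde{Y}_{i},R_{i}}\circ\psi_{i*}$ is an element of $\mathcal{S}$.

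Next I compare the values of $\zeta_{i}$ on prime divisors. The exceptional divisor of the contraction is the birational transform $\tilde{P}_{i}$ of $P_{i}$. The curves spanning $R_{i}$ cover $\tilde{P}_{i}$, so $\tilde{P}_{i}\cdot R_{i}<0$, i.e. $\zeta_{i}(P_{i})<0$. For any other prime divisor $Q$ on $Y_{0}$, the birational transform $\tilde{Q}$ on $\tilde{Y}_{i}$ is either zero (if $Q$ was contracted earlier) or a prime divisor different from $\tilde{P}_{i}$. In either case $\tilde{Q}$ is effective with no component through the general curve of $R_{i}$, so $\tilde{Q}\cdot R_{i}\geq 0$ and $\zeta_{i}(Q)\geq 0$. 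It follows that $P_{i}$ is the unique prime divisor of $Y_{0}$ on which $\zeta_{i}$ is negative. Therefore $P_{i}\neq P_{k}$ forces $\zeta_{i}\neq\zeta_{k}$, and $\mathcal{S}$ would be infinite, contradicting Theorem \ref{lem--base-finite-intnum-func}. So the divisors that can be contracted form a finite set $\mathcal{Q}_{0}$.

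The main point needing care is that $\zeta_{i}(D)$ is computed via birational transforms through the composite $Y_{0}\dashrightarrow\tilde{Y}_{i}$. It must agree with the intersection of $R_{i}$ with the pushforward of $D$, including the convention that the transforms of divisors contracted earlier vanish. This follows from Definition \ref{defn--intnum-func} applied to $(Y_{i},\Delta_{Y_{i}})$ together with smallness of $\psi_{i}$: $\psi_{i*}$ commutes with taking birational transforms and preserves $\mathbb{R}$-Cartierness by $\mathbb{Q}$-factoriality. Everything else is formal once Theorem \ref{lem--base-finite-intnum-func} is available.
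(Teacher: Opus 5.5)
Your proposal is correct and is essentially the paper's proof. The paper simply says that the argument of Theorem~\ref{thm--finite-div-cont-mmp} goes through with Theorem~\ref{lem--base-finite-intnum-func} in place of Theorem~\ref{thm--finite-intnum-func}: truncate at the divisorial contraction of $P_i$, observe $\zeta_i(P_i)<0$ and $\zeta_i(Q)\geq 0$ for $Q\neq P_i$, and contradict finiteness. You do the same, making explicit the smallness of $\psi$ and the identification $N^{1}(Y_{0})_{\mathbb{R}}\cong N^{1}(Y)_{\mathbb{R}}$.

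One small precision: the inequality $\tilde{P}_{i}\cdot R_{i}<0$ comes from $\tilde{P}_{i}$ being exceptional for a contraction of relative Picard number one, via the negativity lemma. It does not follow merely from the curves of $R_{i}$ covering $\tilde{P}_{i}$.
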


\begin{proof}
The argument in the proof of Theorem \ref{thm--finite-div-cont-mmp} works in our situation because we may use Theorem \ref{lem--base-finite-intnum-func}. 
\end{proof}

\begin{thm}\label{lem--base-mfs-small}
With notations as in Definition \ref{defn--ind-small-family}, let $L_{Y_{0}}:=\sum_{i=1}^{n}a_{i}L_{i}$ be an effective big $\mathbb{Q}$-divisor on $Y_{0}$ such that
\begin{itemize}
\item
each $L_{i}$ is an effective big $\mathbb{Q}$-divisor on $Y_{0}$ and $a_{i} \in \mathbb{Q}_{>0}$, and
\item
the rational polytope in $N^{1}(Y_{0})_{\mathbb{R}}$ spanned by $L_{1},\,\cdots,\,L_{n}$ has dimension $\rho(Y_{0})$. 
\end{itemize}
Fix two elements $\psi' \colon Y_{0} \dashrightarrow Y'$ and $\psi'' \colon Y_{0} \dashrightarrow Y''$ of $\mathfrak{F}_{Y_{0}}$, and we put $\Delta_{Y'}:=\alpha \psi'_{*}L_{Y_{0}}$ and $\Delta_{Y''}:=\alpha \psi''_{*}L_{Y_{0}}$, where $\alpha$ is as in {\rm(}\ref{defn--ind-small-family-(*)}{\rm)}. 
Let 
$$(Y',\Delta_{Y'}) \dashrightarrow (\tilde{Y}',\Delta_{\tilde{Y}'}) \overset{h'}{\longrightarrow} T' \qquad {\rm and} \qquad (Y'',\Delta_{Y''}) \dashrightarrow (\tilde{Y}'',\Delta_{\tilde{Y}''}) \overset{h''}{\longrightarrow} T''$$ 
be sequences of steps of a $(K_{Y'}+\Delta_{Y'})$-MMP and a $(K_{Y''}+\Delta_{Y''})$-MMP terminating with Mori fiber spaces $h' \colon (\tilde{Y}',\Delta_{\tilde{Y}'}) \to T'$ and $h'' \colon (\tilde{Y}'',\Delta_{\tilde{Y}''}) \to T''$, respectively. 
Let $\tilde{R}'$ and $\tilde{R}''$ be extremal rays that define $h'$ and $h''$, respectively. 
Suppose that 
\begin{itemize}
\item
the induced birational map $\phi \colon (\tilde{Y}',\Delta_{\tilde{Y}'}) \dashrightarrow (\tilde{Y}'',\Delta_{\tilde{Y}''})$ is small, and
\item
$\zeta_{\tilde{Y}',\tilde{R}'} \circ \psi'_{*}=\gamma\zeta_{\tilde{Y}'',\tilde{R}''} \circ \psi''_{*}$ as functions (see Definition \ref{defn--intnum-func}) for some $\gamma \in \mathbb{R}_{>0}$. 
\end{itemize}
Then there exists a small birational map $\theta \colon T' \dashrightarrow T''$ such that $h'' \circ \phi=\theta \circ h'$. 
\end{thm}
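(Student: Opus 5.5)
The plan is to repeat the proof of Theorem \ref{thm--two-mfs-small} almost verbatim. The only new point is that $\tilde{Y}'$ and $\tilde{Y}''$ are reached from two different varieties $Y'$ and $Y''$. So the extremal-ray functions must be compared through the common source $Y_{0}$, using the pushforward maps $\psi'_{*}$ and $\psi''_{*}$.

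\textbf{Step 1: a common reference.} Since $\psi'$ and $\psi''$ are small and $Y_{0}$, $Y'$, $Y''$ are $\mathbb{Q}$-factorial, $\psi'_{*}$ and $\psi''_{*}$ identify Weil divisors and induce isomorphisms $N^{1}(Y_{0})_{\mathbb{R}}\cong N^{1}(Y')_{\mathbb{R}}\cong N^{1}(Y'')_{\mathbb{R}}$. Given a $\mathbb{Q}$-divisor $\tilde{D}'$ on $\tilde{Y}'$, let $D_{Y_{0}}$ be the $\mathbb{Q}$-divisor on $Y_{0}$ whose birational transform on $\tilde{Y}'$ is $\tilde{D}'$. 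Such a $D_{Y_{0}}$ exists because $Y_{0}\dashrightarrow Y'\dashrightarrow\tilde{Y}'$ is a birational contraction. Since $\phi$ is small, the birational transform of $D_{Y_{0}}$ on $\tilde{Y}''$ is $\phi_{*}\tilde{D}'$. Here I check that the map $Y_{0}\dashrightarrow \tilde{Y}''$ obtained through $Y''$ agrees with $\phi\circ(Y_{0}\dashrightarrow\tilde{Y}')$. This holds since all the maps are birational and $\phi$ is defined as the induced map.

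\textbf{Step 2: pullbacks from the bases correspond.} By definition, $\zeta_{\tilde{Y}',\tilde{R}'}(\psi'_{*}D_{Y_{0}})=(\tilde{D}'\cdot\tilde{R}')$, and similarly on the other side. The hypothesis $\zeta_{\tilde{Y}',\tilde{R}'}\circ\psi'_{*}=\gamma\,\zeta_{\tilde{Y}'',\tilde{R}''}\circ\psi''_{*}$ with $\gamma>0$ then gives $(\tilde{D}'\cdot\tilde{R}')=0$ if and only if $(\phi_{*}\tilde{D}'\cdot\tilde{R}'')=0$. Because $h'$ and $h''$ are Mori fiber spaces of relative Picard number one between $\mathbb{Q}$-factorial varieties, vanishing on the extremal ray is equivalent to being $\mathbb{Q}$-linearly a pullback from the base, by the cone theorem. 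Hence $\tilde{D}'\sim_{\mathbb{Q}}h'^{*}L_{T'}$ for some $L_{T'}$ if and only if $\phi_{*}\tilde{D}'\sim_{\mathbb{Q}}h''^{*}L_{T''}$ for some $L_{T''}$. This is exactly the first paragraph of the proof of Theorem \ref{thm--two-mfs-small}.

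\textbf{Step 3: construct $\theta$.} From here the argument of Theorem \ref{thm--two-mfs-small} transfers word for word, with $(X',X'',Z',Z'',\pi',\pi'')$ replaced by $(\tilde{Y}',\tilde{Y}'',T',T'',h',h'')$:
\begin{enumerate}[(a)]
\item Take a common resolution $W$ of $\phi$ and pull back an ample divisor from $T'$. The negativity lemma gives an effective exceptional divisor $E_{W}$, and the base locus comparison shows $E_{W}$ is vertical over $T''$.
\item A general fiber of $W\to T''$ therefore maps to a point of $T'$. Stein factorization of $W\to T'\times T''$ then yields a birational map $\theta$ with $h''\circ\phi=\theta\circ h'$.
\item Smallness of $\theta$ follows from the generic-point argument. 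Each prime divisor on the base pulls back to a multiple of a prime divisor, and $\phi$ is small.
\end{enumerate}
None of these steps uses the klt structure or the specific origin of $\tilde{Y}'$. The statement is independent of the condition (\ref{defn--ind-small-family-(*)}) and of the choice of $L_{Y_{0}}$.

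I expect the main obstacle to be Step 1. There I must verify that the zero loci of the two $\zeta$-functions correspond to the same divisors on $\tilde{Y}'$ and $\tilde{Y}''$, which requires that every divisor on $\tilde{Y}'$ comes from $Y_{0}$ compatibly on both sides. Divisors of $Y_{0}$ contracted along either MMP cause no problem, since we only test divisors on $\tilde{Y}'$ and their transforms. Their contribution to $\zeta$ is accounted for by the pushforward of $\mathbb{R}$-Cartier classes (\cite[Remark 6.1]{hashizumehu}).
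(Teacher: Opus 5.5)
Your proposal is correct and follows the paper's proof, which reruns the argument of Theorem~\ref{thm--two-mfs-small} with $Y_{0}$ as the common marked source model, transporting the proportionality of extremal-ray functions through $\psi'_{*}$ and $\psi''_{*}$ as in your Steps 1--3. One small correction to your closing remark: the descent in Step~2 of divisors trivial on $\tilde{R}'$ or $\tilde{R}''$ to pullbacks from $T'$ or $T''$ is the contraction theorem for the klt pairs $(\tilde{Y}',\Delta_{\tilde{Y}'})$ and $(\tilde{Y}'',\Delta_{\tilde{Y}''})$, and their klt property comes from condition~(\ref{defn--ind-small-family-(*)}), so that condition is used implicitly.
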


\begin{proof}
The argument in the proof of Theorem \ref{thm--two-mfs-small} works after replacing the common starting model there by the common marked model $Y_{0}$. %, since the only property needed in that proof is that, for any divisor on $Y_{0}$, its transforms on $\widetilde Y'$ and $\widetilde Y''$ have zero intersection with $\tilde{R}'$ and $\tilde{R}''$, respectively, simultaneously.
\end{proof}

\begin{thm}\label{thm--ind-klt-threshold}
With notations as in Definition \ref{defn--ind-small-family}, let $L_{Y_{0}}:=\sum_{i=1}^{n}a_{i}L_{i}$ be an effective big $\mathbb{Q}$-divisor on $Y_{0}$ such that
\begin{itemize}
\item
each $L_{i}$ is an effective big $\mathbb{Q}$-divisor on $Y_{0}$ and $a_{i} \in \mathbb{Q}_{>0}$, and
\item
the rational polytope in $N^{1}(Y_{0})_{\mathbb{R}}$ spanned by $L_{1},\,\cdots,\,L_{n}$ has dimension $\rho(Y_{0})$. 
\end{itemize}
Put $\Delta_{Y_{0}}:=\alpha L_{Y_{0}}$. 
Suppose that $K_{Y_{0}}+\Delta_{Y_{0}}$ is not pseudo-effective.
Let 
$$(Y_{0},\Delta_{Y_{0}}) \overset{\sigma_{0}}{\dashrightarrow} (Y'_{0},\Delta'_{0}) \overset{h_{0}}{\longrightarrow}T_{0}$$ 
be a sequence of steps of a $(K_{Y_{0}}+\Delta_{Y_{0}})$-MMP terminating with a Mori fiber space $h_{0} \colon (Y'_{0},\Delta'_{0}) \to T_{0}$, and let $R_{0}\subset \overline{\rm NE}(Y'_{0})$ be a $(K_{Y'_{0}}+\Delta'_{0})$-negative extremal ray defining $h_{0}$. 
Let $H_{T_{0}}$ be an effective $\mathbb{R}$-divisor on $T_{0}$. 
Then there exists $\beta \in \mathbb{Q}_{>0}$ satisfying the following. 
We define $\mathfrak{N}$ to be the set of pairs
$$\Bigl((Y,\Delta_{Y}) \overset{\sigma'}{\dashrightarrow} (Y',\Delta_{Y'}) \overset{h'}{\longrightarrow}T', R'\Bigr),$$
where $(Y,\Delta_{Y}) \overset{\sigma'}{\dashrightarrow} (Y',\Delta_{Y'}) \overset{h'}{\longrightarrow}T'$ is a sequence of steps of a $(K_{Y}+\Delta_{Y})$-MMP that terminates with a Mori fiber space $h' \colon (Y',\Delta_{Y'}) \to T'$ for some $\psi \colon Y_{0}\dashrightarrow Y \in \mathfrak{F}_{Y_{0}}$, and $R'$ is the $(K_{Y'}+\Delta_{Y'})$-negative extremal ray defining $h'$, such that
\begin{itemize}
\item
the induced birational map $\phi \colon Y'_{0} \dashrightarrow Y'$ is small, and 
\item
$\zeta_{Y'_{0},R_{0}}=c\zeta_{Y',R'} \circ \psi_{*}$ for some $c \in \mathbb{R}_{>0}$. 
\end{itemize} 
We note that there exists a small birational map $\theta \colon T_{0} \dashrightarrow T'$ such that $h' \circ \phi=\theta \circ h_{0}$. 
Then, for any $G_{T_{0}} \in |H_{T_{0}}|_{\mathbb{R}}$, the pair $(T',2\beta \theta_{*}G_{T_{0}})$ is klt. 
\end{thm}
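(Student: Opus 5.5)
The plan is to mimic the proof of Lemma \ref{lem--base-klt-threshold}, replacing the fixed starting pair $(X,\Delta)$ by the family $\{(Y,\Delta_{Y})\}_{\psi\in\mathfrak{F}_{Y_{0}}}$ and using condition (\ref{defn--ind-small-family-(*)}) to obtain a threshold that is uniform in $\psi$. First I pull back $H_{T_{0}}$: take a common resolution $f\colon W\to Y_{0}$, $f_{0}\colon W\to Y'_{0}$ of $\sigma_{0}$ and set $L^{H}_{Y_{0}}:=f_{*}f_{0}^{*}h_{0}^{*}H_{T_{0}}$. This is an effective $\mathbb{R}$-Cartier divisor on $Y_{0}$, since $Y_{0}$ is $\mathbb{Q}$-factorial. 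Then I apply (\ref{defn--ind-small-family-(*)}) to the effective divisor $L_{Y_{0}}+L^{H}_{Y_{0}}$ (or to a large multiple of it). This gives $\alpha'\in\mathbb{Q}_{>0}$ such that $(Y,2\alpha'\psi_{*}(G+E))$ is klt for every $\psi$, every $G\in|L_{Y_{0}}|_{\mathbb{R}}$ and every $E\in|L^{H}_{Y_{0}}|_{\mathbb{R}}$. Because $\Delta_{Y}=\alpha\psi_{*}L_{Y_{0}}$ with $(Y,2\Delta_{Y})$ klt, convexity of the klt condition (interpolating between $(Y,2\Delta_{Y})$ and $(Y,2\alpha'\psi_{*}(L_{Y_{0}}+E))$) shows that $(Y,\Delta_{Y}+3\beta\psi_{*}E)$ is klt for all such $Y$ and $E$, for some small $\beta\in\mathbb{Q}_{>0}$ independent of $\psi$.

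Next I need every $(K_{Y}+\Delta_{Y})$-MMP to also be a $(K_{Y}+\Delta_{Y}+2\beta\psi_{*}E)$-MMP, uniformly in $\psi$. For this I use the uniform version of Corollary \ref{cor--mmp-boundary-polytope} constructed in the proof of Theorem \ref{lem--base-finite-intnum-func}. That proof gives a rational polytope $\mathcal{C}_{Y_{0}}$ with $\Delta_{Y_{0}}$ in its interior such that, for every $\psi$, every klt $(Y,B)$ with $B\in\psi_{*}\mathcal{C}_{Y_{0}}$ numerically has the property that every $(K_{Y}+\Delta_{Y})$-negative ray along any $(K_{Y}+\Delta_{Y})$-MMP is $(K+B)$-negative. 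Since $\Delta_{Y_{0}}+2\beta E\equiv\Delta_{Y_{0}}+2\beta L^{H}_{Y_{0}}$ is a single numerical class lying in ${\rm int}(\mathcal{C}_{Y_{0}})$ once $\beta$ is small enough, shrinking $\beta$ once handles all $\psi$ and all $E$ simultaneously. This step is the main obstacle. One has to check that the interior condition really is uniform over the family, and this rests on the uniform Cartier index $I$ of Lemma \ref{lem--base-bound-Cartier} and the uniform construction in Theorem \ref{lem--base-finite-intnum-func}.

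Now I fix an element of $\mathfrak{N}$ with its $\phi$ and $\theta$, where $\theta$ exists by Theorem \ref{lem--base-mfs-small}, and take $G_{T_{0}}\in|H_{T_{0}}|_{\mathbb{R}}$. Set $G_{T'}:=\theta_{*}G_{T_{0}}$ and $G':=h'^{*}G_{T'}$. Since $\phi$ and $\theta$ are small and vertical prime divisors map to prime divisors, $\phi^{-1}_{*}G'=h_{0}^{*}G_{T_{0}}$. Define $E$ as the pushforward of $G'$ to $Y_{0}$. Then $E\in|L^{H}_{Y_{0}}|_{\mathbb{R}}$ and $\psi_{*}E$ is the birational transform of $G'$ on $Y$. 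By the previous step, $(Y',\Delta_{Y'}+2\beta G')$ is klt, because the MMP $Y\dashrightarrow Y'$ is also a $(K_{Y}+\Delta_{Y}+2\beta\psi_{*}E)$-MMP.

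Finally I apply the canonical bundle formula exactly as in Lemma \ref{lem--base-klt-threshold}. Choose a general $h'$-ample $H'\sim_{\mathbb{R},T'}-(K_{Y'}+\Delta_{Y'}+2\beta G')$. Then \cite[Theorem 1.7]{fujino-hashizume-adjunction} gives a generalized klt pair $(T',B'+M')$. Since $T'$ is $\mathbb{Q}$-factorial ($T'$ is the base of a Mori fiber space on a $\mathbb{Q}$-factorial variety), $(T',B')$ is klt. Comparing lc thresholds over generic points of prime divisors gives $B'\geq2\beta G_{T'}$. Hence $(T',2\beta\theta_{*}G_{T_{0}})$ is klt.
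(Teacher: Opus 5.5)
Your proposal is correct and follows essentially the same route as the paper. You pull $H_{T_0}$ back to an effective divisor on $Y_0$ via a common resolution, combine (\ref{defn--ind-small-family-(*)}) with the klt property of $(Y,2\Delta_Y)$ to get a threshold uniform in $\psi$, use the uniform polytope from the proof of Theorem \ref{lem--base-finite-intnum-func} (which has $\Delta_{Y_0}$ in its interior) so that every $(K_Y+\Delta_Y)$-MMP is also a $(K_Y+\Delta_Y+2\beta\psi_*E)$-MMP, and finish with the canonical bundle formula on $T'$.

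The differences are only cosmetic. The paper applies (\ref{defn--ind-small-family-(*)}) directly to the pulled-back divisor rather than to $L_{Y_0}+L^H_{Y_0}$. Also, your $E$ should be read as $f_*f_0^*h_0^*G_{T_0}$, not as the strict transform of $G'$, so that it actually lies in $|L^H_{Y_0}|_{\mathbb{R}}$.
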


\begin{proof}
The proof of Lemma \ref{lem--base-klt-threshold} is valid for our situation. 
For the reader's convenience, we write the detailed proof. 

Let $f \colon W \to Y_{0}$ and $f' \colon W \to Y'_{0}$ be a common resolution of $\sigma_{0} \colon Y_{0} \dashrightarrow Y'_{0}$. 
We put $H_{0}:=f_{*}f'^{*}h_{0}^{*}H_{T_{0}}$. 
Since $Y_{0}$ is $\mathbb{Q}$-factorial, $H_{0}$ is an effective $\mathbb{R}$-Cartier divisor on $Y_{0}$. 
By the property {\rm(}\ref{defn--ind-small-family-(*)}{\rm)} in Definition \ref{defn--ind-small-family}, there exists $\beta \in \mathbb{Q}_{>0}$ such that $(Y,4\beta \psi_{*}E_{0})$ is klt for every $E_{0} \in |H_{0}|_{\mathbb{R}}$ and $\psi \colon Y_{0} \dashrightarrow Y \in \mathfrak{F}_{Y_{0}}$. 
Put $\Delta_{Y}:=\alpha \psi_{*}L_{Y_{0}}$. 
Since $(Y, 2\Delta_{Y})$ is klt for every $\psi \colon Y_{0} \dashrightarrow Y \in \mathfrak{F}_{Y_{0}}$, we see that $(Y, \Delta_{Y}+2\beta \psi_{*}E_{0})$ is klt for every $E_{0} \in |H_{0}|_{\mathbb{R}}$ and $\psi \colon Y_{0} \dashrightarrow Y \in \mathfrak{F}_{Y_{0}}$. 
By the same argument as in the proof of Theorem \ref{lem--base-finite-intnum-func} (see also Corollary \ref{cor--mmp-boundary-polytope}) and replacing $\beta$ if necessary, we may assume that any $(K_{Y}+\Delta_{Y})$-MMP is also a $(K_{Y}+\Delta_{Y}+2\beta E_{Y})$-MMP for every $E_{Y} \in |\psi_{*}H_{0}|_{\mathbb{R}}$. 

Now we have the following diagram
$$
\xymatrix{
&W\ar[dl]_{f}\ar[dr]^{f'}\\%&&\tilde{W}\ar[ll]_-{g}\ar[dr]^{g'}\\
Y_{0}\ar@{-->}[rr]^-{\sigma_{0}}&&Y'_{0}\ar@{-->}[rr]^-{\phi}\ar[d]_{h_{0}}&&Y'\ar[d]^{h'}\\
&&T_{0}\ar@{-->}[rr]_-{\theta}&&T'
}
$$
Pick any element $G_{T_{0}} \in |H_{T_{0}}|_{\mathbb{R}}$. 
We put 
$$G_{T'} :=\theta_{*}G_{T_{0}}, \qquad G' :=h'^{*}G_{T'} \qquad {\rm and} \qquad G_{0}:=f_{*}f'^{*}\phi^{-1}_{*}G'.$$ 
Since $\phi$ and $\theta$ are small and the images of all vertical prime divisors with respect to $h_{0}$ and $h'$ are prime divisors, we have $\phi^{-1}_{*}G'=h_{0}^{*}G_{T_{0}}$. 
Then we have $G_{0} \in |H_{0}|_{\mathbb{R}}$, and $(Y, \Delta_{Y}+2\beta \psi_{*}G_{0})$ is klt by the above argument. 
%Put $G':=\sigma'_{*}G=\phi_{*}\sigma_{0*}G$. By construction, we have $G' =\pi'^{*}G_{Z'}$. 
Moreover, $(Y', \Delta_{Y'}+2\beta G')$ is klt since any $(K_{Y}+\Delta_{Y})$-MMP is also a $(K_{Y}+\Delta_{Y}+2\beta \psi_{*}G_{0})$-MMP, which was discussed above. 

We pick a sufficiently general $h'$-ample $\mathbb{R}$-divisor $H' \sim_{\mathbb{R},\,T'}-(K_{Y'}+\Delta_{Y'}+2\beta G')$, and we consider the klt-trivial fibration $h' \colon (Y', \Delta_{Y'}+2\beta G'+H') \to T'$. 
By the canonical bundle formula for $\mathbb{R}$-boundary divisors \cite[Theorem 1.7]{fujino-hashizume-adjunction}, we get a generalized klt pair $(T',B'+M')$ on $T'$ with the nef part $\boldsymbol{\rm M}'$ (cf.~\cite{bz}). 
Since $T'$ is $\mathbb{Q}$-factorial, $(T',B')$ is klt. 
Moreover we have $B' \geq 2\beta G_{T'}$. 
Indeed, for any prime divisor $P'$ on $T'$, the lc threshold of $h'^{*}P'$ with respect to $(Y', \Delta_{Y'}+2\beta G'+H')$ over the generic point of $P'$, which we denote by $t_{P'}$, is not greater than $1- {\rm coeff}_{P'}(2\beta G_{T'})$. 
Then 
$${\rm coeff}_{P'}(2\beta G_{T'}) \leq 1-t_{P'}={\rm coeff}_{P'}(B').$$
Thus,  $B' \geq 2\beta G_{T'}$, and therefore $(T',  2\beta G_{T'})$ is klt. 
This completes the proof. 
\end{proof}

\begin{cor}\label{cor--ind-form-small-family}
We use the notations in Definition \ref{defn--ind-small-family} and we retain the setup and notation of Theorem \ref{thm--ind-klt-threshold}. 
As in Theorem \ref{thm--ind-klt-threshold}, we define $\mathfrak{N}$ to be the set of pairs
$$\Bigl((Y,\Delta_{Y}) \overset{\sigma'}{\dashrightarrow} (Y',\Delta_{Y'}) \overset{h'}{\longrightarrow}T', R'\Bigr),$$
where $(Y,\Delta_{Y}) \overset{\sigma'}{\dashrightarrow} (Y',\Delta_{Y'}) \overset{h'}{\longrightarrow}T'$ is a sequence of steps of a $(K_{Y}+\Delta_{Y})$-MMP that terminates with a Mori fiber space $h' \colon (Y',\Delta_{Y'}) \to T'$ for some $\psi \colon Y_{0}\dashrightarrow Y \in \mathfrak{F}_{Y_{0}}$, and $R'$ is the $(K_{Y'}+\Delta_{Y'})$-negative extremal ray defining $h'$, such that
\begin{itemize}
\item
the induced birational map $\phi \colon Y'_{0} \dashrightarrow Y'$ is small, and 
\item
$\zeta_{Y'_{0},R_{0}}=c\zeta_{Y',R'} \circ \psi_{*}$ for some $c \in \mathbb{R}_{>0}$. 
\end{itemize} 
We note that there exists a small birational map $\theta \colon T_{0} \dashrightarrow T'$ such that $h' \circ \phi=\theta \circ h_{0}$. 
We consider the set
$$\mathfrak{G}:=\left\{\theta \colon T_{0} \dashrightarrow T' \,\middle|\, \Bigl((Y,\Delta_{Y}) \overset{\sigma'}{\dashrightarrow} (Y',\Delta_{Y'}) \overset{h'}{\longrightarrow}T', R'\Bigr) \in \mathfrak{N},\, h' \circ \phi=\theta \circ h_{0}\right\}.$$
Then the following properties hold.
\begin{enumerate}[(i)]
\item\label{cor--ind-form-small-family-(i)}
every $\theta \colon T_{0} \dashrightarrow T' \in \mathfrak{G}$ is a small birational map and $T'$ is a projective $\mathbb{Q}$-factorial variety, and 
\item\label{cor--ind-form-small-family-(ii)}
for any effective $\mathbb{R}$-divisor $H_{T_{0}}$ on $T_{0}$, there exists $\beta \in \mathbb{Q}_{>0}$ such that for every $\theta \colon T_{0} \dashrightarrow T' \in \mathfrak{G}$, the pair $(T',2\beta \theta_{*}G_{T_{0}})$ is klt for all $G_{T_{0}} \in |H_{T_{0}}|_{\mathbb{R}}$. 
\end{enumerate}
\end{cor}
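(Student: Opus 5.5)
The corollary should follow directly from Theorem \ref{lem--base-mfs-small} and Theorem \ref{thm--ind-klt-threshold}, in the same way that Corollary \ref{cor--form-small-family} follows from Theorem \ref{thm--two-mfs-small} and Lemma \ref{lem--base-klt-threshold}. The plan is to verify (i) and (ii) separately.

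For (i), take an element of $\mathfrak{N}$, coming from some $\psi \colon Y_{0} \dashrightarrow Y \in \mathfrak{F}_{Y_{0}}$. We compare it with the fixed Mori fiber space $h_{0} \colon Y'_{0} \to T_{0}$, which arises from ${\rm id}_{Y_{0}} \in \mathfrak{F}_{Y_{0}}$. We then apply Theorem \ref{lem--base-mfs-small} with $\psi'={\rm id}_{Y_{0}}$ and $\psi''=\psi$.

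Its hypotheses are exactly the two conditions defining $\mathfrak{N}$:
\begin{itemize}
\item the induced map $\phi \colon Y'_{0} \dashrightarrow Y'$ is small;
\item $\zeta_{Y'_{0},R_{0}}=c\,\zeta_{Y',R'}\circ\psi_{*}$ with $c>0$.
\end{itemize}
The theorem therefore yields a small birational map $\theta \colon T_{0} \dashrightarrow T'$ with $h'\circ\phi=\theta\circ h_{0}$. It remains to check that $T'$ is projective and $\mathbb{Q}$-factorial. Projectivity is clear, since $T'$ is the base of a contraction of a projective variety. For $\mathbb{Q}$-factoriality, $Y'$ is $\mathbb{Q}$-factorial because it is the output of an MMP on a $\mathbb{Q}$-factorial klt pair; indeed $Y$ is $\mathbb{Q}$-factorial and $(Y,\Delta_{Y})$ is klt by $(*)$. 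The base of an extremal Mori fiber contraction of a $\mathbb{Q}$-factorial variety is $\mathbb{Q}$-factorial (the standard fact \cite{kollar-mori}), as already noted in Setup \ref{setup--base-mfs}.

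For (ii), fix an effective $\mathbb{R}$-divisor $H_{T_{0}}$ on $T_{0}$. Theorem \ref{thm--ind-klt-threshold} gives $\beta\in\mathbb{Q}_{>0}$ such that $(T',2\beta\theta_{*}G_{T_{0}})$ is klt for every $G_{T_{0}}\in|H_{T_{0}}|_{\mathbb{R}}$ and every element of $\mathfrak{N}$. Every $\theta\in\mathfrak{G}$ arises from such an element by definition, which is (ii).

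One point needs attention: $\theta$ must be well defined, i.e. it should depend only on the data, so that the set $\mathfrak{G}$ makes sense. This holds because $\theta$ is uniquely determined by the identity $h'\circ\phi=\theta\circ h_{0}$, as $h_{0}$ is dominant. Beyond this, the argument is a direct citation of the two preceding theorems.
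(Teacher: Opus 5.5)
Your proposal is correct and follows the paper's argument, which just cites Theorem \ref{lem--base-mfs-small} for (i) and Theorem \ref{thm--ind-klt-threshold} for (ii); you apply the former with $\psi'={\rm id}_{Y_0}$ and $\psi''=\psi$, which is the intended reading. Your added remarks on the $\mathbb{Q}$-factoriality of $T'$ and on $\theta$ being determined by $h'\circ\phi=\theta\circ h_{0}$ make explicit what the paper leaves implicit.
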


\begin{proof}
This corollary immediately follows from Theorem \ref{lem--base-mfs-small} and Theorem \ref{thm--ind-klt-threshold}. 
\end{proof}

\section{Inverse of minimal model program}\label{sec--inv-mmp}

In this section, we prepare some results used to prove the finiteness of starting models by using resulting models.

\begin{lem}\label{lem--inv-divcont}
Let $(X,\Delta)$ be a projective $\mathbb{Q}$-factorial klt pair such that $K_{X}+\Delta$ or $\Delta$ is big. 
Fix an $\mathbb{R}$-linear function $\zeta \colon N^{1}(X)_{\mathbb{R}} \to \mathbb{R}$. 
Fix another projective $\mathbb{Q}$-factorial klt pair $(\tilde{X},\tilde{\Delta})$ with a birational contraction $\phi\colon X\dashrightarrow \tilde{X}$ such that $\tilde{\Delta}=\phi_{*}\Delta$. 
Then the set
$$\left\{\psi' \colon (X',\Delta') \to (\tilde{X},\tilde{\Delta})\,\middle|\,\begin{array}{l}\text{$(X',\Delta')$ is a projective $\mathbb{Q}$-factorial klt pair,}\\
\text{$\psi'^{-1}\circ \phi\colon (X,\Delta)\dashrightarrow (X',\Delta')$ is a birational contraction,}\\
\text{$\psi'$ is a divisorial contraction defined by}\\\text{a $(K_{X'}+\Delta')$-negative extremal ray $R'$}\\\text{such that $\zeta_{X',R'}\circ (\psi'^{-1} \circ \phi)_{*}=\zeta$}\end{array}\right\}$$
consists of at most one element up to isomorphism over $\tilde{X}$ compatible with the markings. 
\end{lem}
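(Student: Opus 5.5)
We will show that two such divisorial contractions $\psi'_{k}\colon (X'_{k},\Delta'_{k})\to(\tilde X,\tilde\Delta)$, $k=1,2$, with the same function $\zeta$, are isomorphic over $\tilde X$ compatibly with the markings. The plan is to first identify the exceptional divisors, then show that $X'_{1}\dashrightarrow X'_{2}$ is small, and finally recover each $X'_{k}$ as a relative ample model over $\tilde X$ of one and the same divisor.

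\emph{Step 1: the exceptional divisor is determined by $\zeta$.} Let $E'_{k}$ be the $\psi'_{k}$-exceptional prime divisor. Let $D$ range over prime divisors on $X$ and write $\tau_{k}:=\psi_{k}'^{-1}\circ\phi$. Since $\tau_{k}$ is a birational contraction, $D_{X'_{k}}:=\tau_{k*}D$ is either zero or a prime divisor. As in the proof of Theorem~\ref{thm--finite-div-cont-mmp}, $\zeta(D)<0$ holds exactly when $D_{X'_{k}}=E'_{k}$: a non-exceptional prime divisor has nonnegative degree on the contracted curves, while $E'_{k}\cdot R'_{k}<0$. Here one must note that divisors of $X$ contracted by $\tau_{k}$ give $\zeta(D)=0$, since their transform is $0$. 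Hence there is a unique prime divisor $D_{0}$ on $X$ with $\zeta(D_{0})<0$, and $E'_{1},E'_{2}$ are both its birational transform. (If no such $D_{0}$ exists the set is empty.)

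\emph{Step 2: the map $X'_{1}\dashrightarrow X'_{2}$ is small.} The divisors on $X$ contracted by $\tau_{k}$ are exactly those contracted by $\phi$ other than $D_{0}$. Therefore $X'_{1}$ and $X'_{2}$ have the same prime divisors as marked divisors on $X$, namely the transforms of $\tilde X$'s divisors plus $D_{0}$. So the induced map $\chi=\tau_{2}\circ\tau_{1}^{-1}\colon X'_{1}\dashrightarrow X'_{2}$ is small and commutes with the maps to $\tilde X$.

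\emph{Step 3: recover $X'_{k}$ as an ample model.} Put $E_{k}:=E'_{k}$. Since $\rho(X'_{k}/\tilde X)=1$, $X'_{k}$ is $\mathbb{Q}$-factorial, and $E_{k}\cdot R'_{k}<0$, the divisor $-E_{k}$ is $\psi'_{k}$-ample. Thus
$$X'_{k}\cong \mathrm{Proj}_{\tilde X}\bigoplus_{m\ge0}\psi'_{k*}\mathcal{O}_{X'_{k}}(-mE_{k}).$$
Because $\chi$ is small over $\tilde X$ and $\chi_{*}E_{1}=E_{2}$, the two sheaves of graded algebras coincide as subsheaves of the function field: a reflexive sheaf is determined in codimension one, and $\chi$ is an isomorphism in codimension one. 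Hence $\chi$ extends to an isomorphism $X'_{1}\cong X'_{2}$ over $\tilde X$, compatible with the markings. The finite generation needed for the $\mathrm{Proj}$ is automatic here, since $-E_{k}$ is relatively ample on the projective $X'_{k}$. Alternatively, one can argue directly that a small map between varieties that are both relatively ample models of the same divisor is an isomorphism, using the negativity lemma on a common resolution as in Theorem~\ref{thm--two-mfs-small}.

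The main obstacle is expected to be Step 1, specifically checking that $\zeta$, defined through the birational transform from $X$, really singles out the exceptional divisor. The delicate cases are prime divisors on $X$ contracted by $\phi$ (transform zero) and the sign of $\zeta(D)$ for non-exceptional $D$ whose transform might contain curves of $R'_{k}$ (degree $\ge0$ but possibly positive). Neither affects the argument, since only strict negativity is used. Note also that the hypotheses that $K_{X}+\Delta$ or $\Delta$ is big, and that the pairs are klt, appear not to be needed beyond the $\mathbb{Q}$-factoriality used to define the transforms.
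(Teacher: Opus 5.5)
Your argument is correct and is essentially the paper's: the sign pattern of $\zeta$ on prime divisors of $X$ shows that $X'_{1}\dashrightarrow X'_{2}$ is small over $\tilde X$, and uniqueness of a relatively ample model then forces an isomorphism compatible with the markings. The only difference is in the last step. The paper applies the negativity lemma to $-(K_{X'}+\Delta')$, which is ample over $\tilde X$, whereas you use the $\psi'$-ample divisor $-E'$, so your version does not even need $R'$ to be $(K_{X'}+\Delta')$-negative.
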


\begin{rem}
In Lemma \ref{lem--inv-divcont}, we do not assume that $\psi'^{-1}\circ \phi \colon (X,\Delta)\dashrightarrow (X',\Delta')$ is a sequence of steps of a $(K_{X}+\Delta)$-MMP. 
\end{rem}

\begin{proof}
For any $\psi' \colon (X',\Delta') \to (\tilde{X},\tilde{\Delta})$ and $\psi'' \colon (X'',\Delta'') \to (\tilde{X},\tilde{\Delta})$ as in the set, the equality
$$\zeta_{X',R'}\circ (\psi'^{-1} \circ \phi)_{*}=\zeta= \zeta_{X'',R''}\circ (\psi''^{-1} \circ \phi)_{*}$$
implies that the induced map $(X',\Delta') \dashrightarrow (X'',\Delta'')$ is small. 
Indeed, pick a prime divisor $P$ on $X$, and let $P'$ (resp.~$P''$) be the strict transform of $P$ on $X'$ (resp.~$X''$). 
By Definition \ref{defn--intnum-func}, we have
\begin{equation*}
\zeta_{X',R'}(P')\,\left\{\begin{array}{ll}
<0& \text{$P'$ is $\psi'$-exceptional}\\
\geq0 & (\text{otherwise})
\end{array}\right.
\; {\rm and} \;
\zeta_{X'',R''}(P'')\,\left\{\begin{array}{ll}
<0& \text{$P''$ is $\psi''$-exceptional}\\
\geq0 & (\text{otherwise}).
\end{array}\right.
\end{equation*}
Therefore, $P'$ is $\psi'$-exceptional if and only if $P''$ is $\psi''$-exceptional for any prime divisor $P$ on $X$. 
This shows that $(X',\Delta') \dashrightarrow (X'',\Delta'')$ is small. 
Since both $-(K_{X'}+\Delta')$ and $-(K_{X''}+\Delta'')$ are ample over $\tilde{X}$, the negativity lemma shows that $(X',\Delta') \dashrightarrow (X'',\Delta'')$ is an isomorphism. 
\end{proof}

\begin{lem}\label{lem--inv-flip}
Let $(X,\Delta)$ be a projective $\mathbb{Q}$-factorial klt pair such that $K_{X}+\Delta$ or $\Delta$ is big. 
Fix an $\mathbb{R}$-linear function $\zeta \colon N^{1}(X)_{\mathbb{R}} \to \mathbb{R}$. 
Fix another projective $\mathbb{Q}$-factorial klt pair $(\tilde{X},\tilde{\Delta})$ with a birational contraction $\phi\colon X\dashrightarrow \tilde{X}$ such that $\tilde{\Delta}=\phi_{*}\Delta$. 
Then 
$$\left\{\psi' \colon (X',\Delta') \dashrightarrow (\tilde{X},\tilde{\Delta})\,\middle|\,\begin{array}{l}\text{$(X',\Delta')$ is a projective $\mathbb{Q}$-factorial klt pair,}\\
%\text{$\psi^{-1}\circ \phi\colon (X,\Delta)\dashrightarrow (X',\Delta')$ is a $(K_{X}+\Delta)$-MMP,}\\
\text{$\psi'$ is a $(K_{X'}+\Delta')$-flip defined by}\\\text{an extremal ray $R'$ such that $\zeta_{X',R'}\circ (\psi'^{-1} \circ \phi)_{*}=\zeta$}\end{array}\right\}$$
is a finite set up to isomorphism compatible with the markings. 
\end{lem}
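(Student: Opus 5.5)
The plan is to show that the set in Lemma \ref{lem--inv-flip} has at most one element, exactly as in Lemma \ref{lem--inv-divcont}, by proving two things: the flipping base $V$ is determined by $\zeta$, and $X'$ is then determined by $V$.

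Fix an element $\psi'\colon (X',\Delta')\dashrightarrow(\tilde X,\tilde\Delta)$ of the set. Write the flip as $X'\overset{g'}{\to}V\overset{\tilde g}{\leftarrow}\tilde X$, with $\rho(X'/V)=1$, $g'$ small and contracting $R'$, and $\tilde g$ small. Set $\chi:=\psi'^{-1}\circ\phi$.

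\textbf{Step 1: the kernel of $\zeta$ determines $\tilde g$.} Since $\psi'$ is small and $X',\tilde X$ are $\mathbb{Q}$-factorial, $\psi'_*\colon N^1(X')_{\mathbb{R}}\to N^1(\tilde X)_{\mathbb{R}}$ is an isomorphism. Moreover $\phi_*=\psi'_*\circ\chi_*$, and $\phi_*\colon N^1(X)_{\mathbb{R}}\to N^1(\tilde X)_{\mathbb{R}}$ is surjective because $\phi$ is a birational contraction to a $\mathbb{Q}$-factorial variety.

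Take $D$ on $X$ and put $D'=\chi_*D$ and $\tilde D=\phi_*D=\psi'_*D'$. Since $\rho(X'/V)=1$, the cone theorem gives
\[
\zeta(D)=0 \iff (D'\cdot R')=0 \iff D'\equiv_V 0.
\]
Because both $g'$ and $\tilde g$ are small, $D'\equiv_V 0$ holds if and only if $D'\equiv g'^*D_V$ for some $\mathbb{R}$-Cartier $D_V$ on $V$, which holds if and only if $\tilde D\equiv \tilde g^*D_V$, i.e.\ $\tilde D\equiv_V 0$. Since $\rho(\tilde X/V)=1$ as well, this is equivalent to $(\tilde D\cdot\tilde R)=0$, where $\tilde R$ is the ray contracted by $\tilde g$.

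Hence the hyperplane $\tilde R^{\perp}\subset N^1(\tilde X)_{\mathbb{R}}$ equals $\phi_*(\ker\zeta)$, so it depends only on $\zeta$ and $\phi$. By duality this fixes the line spanned by $\tilde R$ in $N_1(\tilde X)_{\mathbb{R}}$. The sign is fixed by $(K_{\tilde X}+\tilde\Delta)\cdot\tilde R>0$. Therefore $\tilde R$, and with it the contraction $\tilde g\colon\tilde X\to V$, is independent of the chosen element.

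\textbf{Step 2: $V$ determines $X'$.} Let $\psi''\colon (X'',\Delta'')\dashrightarrow(\tilde X,\tilde\Delta)$ be a second element. By Step 1 it has the same base $V$, so $X'$ and $X''$ are both small over $V$. The induced map $X'\dashrightarrow X''$ is small over $V$, and both $-(K_{X'}+\Delta')$ and $-(K_{X''}+\Delta'')$ are ample over $V$. The negativity lemma, used as in the proof of Lemma \ref{lem--inv-divcont}, then shows that $X'\dashrightarrow X''$ is an isomorphism. (Equivalently, $X'\cong\mathrm{Proj}_V\bigoplus_m \tilde g_*\mathcal{O}_{\tilde X}(\lfloor -m(K_{\tilde X}+\tilde\Delta)\rfloor)$.) This isomorphism is compatible with the markings, since all maps are the identity over a common open subset. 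So the set has at most one element up to isomorphism, and in particular it is finite.

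The main obstacle is Step 1. There one must justify carefully that $D'\equiv_V0$ and $\tilde D\equiv_V0$ are equivalent, which rests on $\mathbb{Q}$-factoriality of both models and on the smallness of $g'$ and $\tilde g$. One must also keep track of the fact that the functional is defined on $N^1(X)_{\mathbb{R}}$ and transported through the surjection $\phi_*$.
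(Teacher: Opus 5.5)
Your proof is correct, but it takes a different route from the paper. The paper argues by contradiction. It picks a general ample $A_{1}$ on $X_{1}$ with $K_{X_{1}}+\Delta_{1}+A_{1}\sim_{\mathbb{R},\,V_{1}}0$ and transports $\tilde A=\psi_{1*}A_{1}$ to every $X_{i}$. Equality of the $\zeta$'s then shows that every $\psi_{i}^{-1}$ is a flop of $(\tilde X,\tilde\Delta+\tilde A)$, and hence a flip of the single pair $(\tilde X,\tilde\Delta+(1+\epsilon)\tilde A)$. Finiteness follows from the finiteness of negative extremal rays for a klt pair with big boundary \cite[Corollary 3.8.2]{bchm}, together with uniqueness of flips.

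You instead read off the flipped-side ray directly. The hyperplane $\tilde R^{\perp}=\phi_{*}(\ker\zeta)$ and the sign of $K_{\tilde X}+\tilde\Delta$ determine $\tilde R$, which determines $\tilde g\colon\tilde X\to V$, which in turn determines $X'$. This route has three advantages:
\begin{itemize}
\item it avoids the auxiliary flop boundary, the klt check for the perturbed pair, and the BCHM finiteness theorem;
\item it never uses bigness of $K_{X}+\Delta$ or $\Delta$;
\item it gives the sharper conclusion that the set has at most one element, the exact analogue of Lemma \ref{lem--inv-divcont}.
\end{itemize}

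The price is bookkeeping in $N^{1}$. You need $\phi_{*}$, $\chi_{*}$ and $\psi'_{*}$ to be well defined on numerical classes. This holds by the negativity lemma, since each is a birational contraction between $\mathbb{Q}$-factorial projective varieties, so a numerically trivial $D$ satisfies $q^{*}q_{*}p^{*}D=p^{*}D$ on a common resolution.

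One implication in your chain is hidden behind ``i.e.'': namely $\tilde D\equiv_{V}0\Rightarrow\tilde D\equiv\tilde g^{*}D_{V}$. The contraction theorem does not give this, because $\tilde g$ contracts a $(K_{\tilde X}+\tilde\Delta)$-positive ray. The cleanest fix is a dimension count. $\psi'_{*}$ is an isomorphism $N^{1}(X')_{\mathbb{R}}\to N^{1}(\tilde X)_{\mathbb{R}}$, and it carries the hyperplane $\{D'\equiv_{V}0\}$ into $\{\tilde D\equiv_{V}0\}$. The latter is a proper subspace, since an ample divisor is positive on a $\tilde g$-contracted curve, so the two coincide. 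This also yields $\rho(\tilde X/V)=1$ without citing it separately.
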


\begin{rem}
In Lemma \ref{lem--inv-flip}, we do not assume that $\psi'^{-1}\circ \phi \colon (X,\Delta)\dashrightarrow (X',\Delta')$ is a sequence of steps of a $(K_{X}+\Delta)$-MMP. 
\end{rem}

\begin{proof}
Suppose for contradiction that there are infinitely many maps 
$$\psi_{i} \colon (X_{i},\Delta_{i}) \dashrightarrow (\tilde{X},\tilde{\Delta})$$
with extremal rays $R_{i}$ in the set of Lemma \ref{lem--inv-flip}. 
For each $i$, let $X_{i} \to V_{i}$ be the flipping contraction, and let $\tilde{X} \to V_{i}$ be the flip. 
We note that we do not know whether $V_{i} \cong V_{j}$ or not. 
Take a general ample $\mathbb{R}$-divisor $A_{1}$ on $X_{1}$ such that $(X_{1},\Delta_{1}+A_{1})$ is klt and $K_{X_{1}}+\Delta_{1}+A_{1} \sim_{\mathbb{R},\,V_{1}}0$. 
Put $\tilde{A}:=\psi_{1*}A_{1}$ and $A_{i}:=\psi_{i*}^{-1}\tilde{A}$. 
By construction, the map $(X_{1},\Delta_{1}+A_{1})\overset{\psi_{1}}{\dashrightarrow} (\tilde{X},\tilde{\Delta}+\tilde{A})$ is a flop, and therefore $(\tilde{X},\tilde{\Delta}+\tilde{A})$ is klt. 
Since we have 
$$\zeta_{X_{i},R_{i}}\circ (\psi^{-1}_{i}\circ \phi)_{*}=\zeta=\zeta_{X_{1},R_{1}}\circ (\psi^{-1}_{1}\circ \phi)_{*},$$
for all $i$, we see that
$$(K_{X_{i}}+\Delta_{i}+A_{i})\cdot R_{i} = (K_{X_{1}}+\Delta_{1}+A_{1})\cdot R_{1} =0.$$
This implies that $\psi_{i}^{-1} \colon (\tilde{X},\tilde{\Delta}+\tilde{A})\dashrightarrow(X_{i},\Delta_{i}+A_{i})$ is a flop. 
Pick $\epsilon \in \mathbb{R}_{>0}$ such that $ (\tilde{X},\tilde{\Delta}+(1+\epsilon)\tilde{A})$ is klt. 
Since $\zeta_{X_{i},R_{i}}\circ (\psi^{-1}_{i}\circ \phi)_{*}=\zeta_{X_{1},R_{1}}\circ (\psi^{-1}_{1}\circ \phi)_{*}$, we see that 
$$(K_{X_{i}}+\Delta_{i}+(1+\epsilon)A_{i})\cdot R_{i} = (K_{X_{1}}+\Delta_{1}+(1+\epsilon)A_{1})\cdot R_{1} >0$$
for all $i$. 
This fact implies that $\psi_{i}^{-1} \colon (\tilde{X},\tilde{\Delta}+(1+\epsilon)\tilde{A})\dashrightarrow(X_{i},\Delta_{i}+(1+\epsilon)A_{i})$ is a flip. 
However, there are only finitely many $(K_{\tilde{X}}+\tilde{\Delta}+(1+\epsilon)\tilde{A})$-negative extremal rays (see, for example, \cite[Corollary 3.8.2]{bchm}). 
Each such ray determines its contraction, and the flip of a fixed contraction is unique. 
This is a contradiction, and Lemma \ref{lem--inv-flip} holds. 
\end{proof}

\begin{lem}\label{lem--inv-mfs}
Let $(X,\Delta) \to Z$ and $(X',\Delta') \to Z$ be contractions from projective klt pairs. 
Let $\phi \colon (X,\Delta)\dashrightarrow (X',\Delta')$ be a small birational map over $Z$ such that $\phi_{*}\Delta=\Delta'$. 
If $-(K_{X}+\Delta)$ and $-(K_{X'}+\Delta')$ are ample over $Z$, then $\phi$ is an isomorphism.
\end{lem}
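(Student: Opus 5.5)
The plan is the standard negativity-lemma argument for small maps between anti-ample models, the same one used at the end of the proof of Lemma \ref{lem--inv-divcont}. Since $\phi$ is small and $\phi_{*}\Delta=\Delta'$, pushing forward identifies the relevant divisors on both sides. For any $\mathbb{R}$-divisor $D$ on $X$, $\phi_{*}$ does not change its coefficients along prime divisors, and $\phi_{*}K_{X}=K_{X'}$ for a suitable choice of canonical divisors. So $\phi_{*}(K_{X}+\Delta)=K_{X'}+\Delta'$.

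First take a common resolution $p\colon W\to X$, $q\colon W\to X'$ with $\phi\circ p=q$. Since $\phi$ is a map over $Z$, both compositions $W\to Z$ agree. Write
$$p^{*}(K_{X}+\Delta)=q^{*}(K_{X'}+\Delta')+E.$$
Here $E$ is exceptional over both $X$ and $X'$: its pushforward to $X$ is $(K_X+\Delta)-\phi^{-1}_{*}(K_{X'}+\Delta')=0$, using that $\phi$ is small.

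Next, $-q^{*}(K_{X'}+\Delta')$ is nef over $Z$, hence nef over $X$. Also $-E=-p^{*}(K_X+\Delta)+q^{*}(K_{X'}+\Delta')$, and $p^{*}(K_X+\Delta)$ is trivial over $X$. Therefore $-E\equiv_{X} -q^{*}(K_{X'}+\Delta')$ is $p$-nef, and $p_{*}E=0$. The negativity lemma gives $E\geq 0$. Symmetrically, $E$ is $q$-nef over $X'$ because $-p^{*}(K_X+\Delta)$ is nef over $Z$, and $q_{*}E=0$, so $-E\geq 0$. Hence $E=0$, and $p^{*}(K_X+\Delta)=q^{*}(K_{X'}+\Delta')$.

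Finally, let $A:=-(K_X+\Delta)$ and $A':=-(K_{X'}+\Delta')$, which are ample over $Z$, with $p^{*}A=q^{*}A'$. For $m$ sufficiently divisible, working with $\mathbb{R}$-divisors by perturbing to rational classes ample over $Z$, we get
$$X\cong \mathrm{Proj}_{Z}\bigoplus_{m} f_{*}\mathcal{O}_{X}(mA).$$
This equals $\mathrm{Proj}_{Z}$ of the same algebra computed on $W$, which is $X'$, compatibly with $\phi$. The main technical point is this last step with $\mathbb{R}$-coefficients. I would handle it concretely. The equality $p^{*}A=q^{*}A'$ says that a curve on $W$ contracted by $p$ has zero intersection with $q^{*}A'$, and it lies over a point of $Z$, so it is contracted by $q$ because $A'$ is ample over $Z$. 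The rigidity lemma then makes $q$ factor through $p$, and by symmetry $p$ factors through $q$. So $\phi$ and $\phi^{-1}$ are morphisms, and $\phi$ is an isomorphism.
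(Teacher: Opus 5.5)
Your argument is correct and is the same negativity-lemma argument the paper intends; the paper's proof is just the remark that the lemma follows from the negativity lemma, and your rigidity-lemma step cleanly gets from $p^{*}(K_X+\Delta)=q^{*}(K_{X'}+\Delta')$ to $\phi$ being an isomorphism. One harmless sign slip: in fact $-E\equiv_X q^{*}(K_{X'}+\Delta')$, not $-q^{*}(K_{X'}+\Delta')$, so $E$ is $p$-nef and $-E$ is $q$-nef; this gives $E\le 0$ and $E\ge 0$ in the opposite order from what you wrote, and $E=0$ still follows.
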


\begin{proof}
This easily follows from the negativity lemma. 
\end{proof}

\begin{prop}\label{prop--num-step-mmp}
Let $(X,\Delta)$ be a projective $\mathbb{Q}$-factorial lc pair such that the set
\begin{equation*}
\mathcal{S}:=\left\{\zeta_{X',R'}\,\middle|
%\begin{array}{l}\text{$\phi \colon (X,\Delta) \dashrightarrow (X',\Delta')$ is a $(K_{X}+\Delta)$-MMP and}\\\text{$R'$ is a $(K_{X'}+\Delta')$-negative extremal ray of $\overline{\rm NE}(X')$}\end{array}
\begin{array}{l}\text{$(X,\Delta) \dashrightarrow (X',\Delta')$ is a $(K_{X}+\Delta)$-MMP,}\\\text{$R'$ is a $(K_{X'}+\Delta')$-negative extremal ray}\end{array}
\right\}
\end{equation*} 
has at most $m$ elements. 
Then there exists a sequence of at most $(m+1)\rho(X)$ steps of a $(K_{X}+\Delta)$-MMP that terminates with a log minimal model or a Mori fiber space. 
\end{prop}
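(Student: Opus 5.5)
We will produce the required MMP by running a $(K_{X}+\Delta)$-MMP with scaling of a general ample divisor $A$. The count splits into two parts. There are at most $\rho(X)$ divisorial contractions (or a final Mori fiber contraction), because each divisorial contraction drops the Picard number by one. Within each maximal block of consecutive flips, we will show that no extremal-ray function $\zeta\in\mathcal{S}$ can occur twice, so each such block has at most $m$ steps. There are at most $\rho(X)$ such blocks, so the total number of steps is at most $m\rho(X)+\rho(X)=(m+1)\rho(X)$. Since every step is a step of a $(K_{X}+\Delta)$-MMP, an MMP satisfying the counting claims is finite and hence must terminate with a log minimal model or a Mori fiber space. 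So the whole content is the counting.

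First we fix $A$. Since $\mathcal{S}$ is finite, we can choose an ample $\mathbb{R}$-divisor $A$ on $X$, general in $N^{1}(X)_{\mathbb{R}}$, with the following property: for any $\zeta,\zeta'\in\mathcal{S}$ that are not positively proportional, the numbers $-\zeta(K_{X}+\Delta)/\zeta(A)$ and $-\zeta'(K_{X}+\Delta)/\zeta'(A)$ are distinct. We then run the MMP with scaling of $A$ and write $X=X_{0}\dashrightarrow X_{1}\dashrightarrow\cdots$, with extremal rays $R_{k}$ and thresholds $\lambda_{k}$. By definition, $(K_{X_{k}}+\Delta_{k}+\lambda_{k}A_{k})\cdot R_{k}=0$, so $\lambda_{k}=-\zeta_{X_{k},R_{k}}(K_{X}+\Delta)/\zeta_{X_{k},R_{k}}(A)$. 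Hence the sequence $(\lambda_{k})$ is non-increasing and is governed by the functions $\zeta_{X_{k},R_{k}}$. Because each step is a $(K+\Delta)$-MMP step, every $\zeta_{X_{k},R_{k}}$ lies in $\mathcal{S}$.

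The main step, and the one I expect to be the real obstacle, is the following claim. Let $X_{i}\dashrightarrow\cdots\dashrightarrow X_{j}$ with $i<j$ be consecutive flips, so that all the $N^{1}(X_{k})_{\mathbb{R}}$ are identified with $N^{1}(X)_{\mathbb{R}}$ through the induced small maps. Then $\zeta_{X_{i},R_{i}}\neq\zeta_{X_{j},R_{j}}$. Under the identification, equality of the two functions means that the minimal curves of $R_{i}$ and $R_{j}$ define the same class $c\in N_{1}$. After the $i$-th flip, relative Picard number one over $V_{i}$ forces the flipped curves on $X_{i+1}$ to have class proportional to $-c$. My first attempt to derive a contradiction uses the scaling and proceeds as follows.

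By the choice of $A$, every step between $i$ and $j$ has the same threshold $\lambda:=\lambda_{i}=\lambda_{j}$. Otherwise the thresholds would strictly drop and then return, contradicting monotonicity. So these steps are $(K+\Delta+\lambda A)$-flops and $(K+\Delta+\mu A)$-flips for $\mu<\lambda$ close to $\lambda$. Now compare $X_{i}$ and $X_{j}$ via a common resolution and the negativity lemma applied to $\Theta=K+\Delta+\mu A$. Along this $\Theta$-MMP, the pullbacks of $\Theta$ strictly decrease over the flipped loci. On the other hand, $c$ being $\Theta$-negative and contracted on both $X_{i}$ and $X_{j}$ should force the ample-over-$V_{i}$ divisor $-\Theta$ and its transform to agree near these curves, which would contradict that strict decrease.

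If this direct argument fails, the fallback is to choose a generic divisor $M$ with $\zeta(M)>0$ for the relevant $\zeta$. We would then use the Lemma \ref{lem--inv-flip}-type uniqueness: the flip of a fixed contraction is unique, and a ray is determined by its $\zeta$. The aim is to show that $X_{j}\to V_{j}$ would coincide with $X_{i}\to V_{i}$, which is absurd after the flip. Granting the claim, the counting in the first paragraph completes the proof.
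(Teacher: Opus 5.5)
Your overall architecture matches the paper's, but the one claim that carries all the content is left unproved. That architecture is: an MMP with scaling, the formula $\lambda_k=-\zeta_{X_k,R_k}(K_X+\Delta)/\zeta_{X_k,R_k}(A)$, and at most $\rho(X)$ blocks of flips, each of length at most $m$. The unproved claim is that no $\zeta\in\mathcal{S}$ repeats inside a block of flips, and neither of your arguments establishes it.

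\begin{itemize}
\item In the negativity-lemma sketch, nothing forces the curves of class $c$ on $X_i$ and on $X_j$ to lie over related loci. The contractions $X_i\to V_i$ and $X_j\to V_j$ have different targets. So there is no basis for $-\Theta$ and its transform to ``agree near these curves.''
\item The strict increase of discrepancies along the $\Theta$-MMP prevents returning to the same model. It does not by itself prevent a numerical class from reappearing on a different model.
\item The fallback via Lemma~\ref{lem--inv-flip} does not apply. That lemma uses uniqueness of flips into a fixed target $\tilde{X}$, whereas $X_i$ and $X_j$ share no such target. Equality of $\zeta$ alone does not identify the two contractions.
\end{itemize}

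Without the claim, you get no bound on the length of a block of flips, so the count collapses.

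The missing ingredient is strict decrease of the scaling thresholds across flips. The paper imports this from \cite[Proof of Proposition 6.2]{hashizumehu}: it runs an MMP with scaling in which $\lambda$ strictly drops as long as only flips occur, and restarts after each divisorial contraction. Once $\lambda$ strictly drops, equal $\zeta$ gives equal $\lambda$, and the contradiction is immediate.

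In your setup this property actually follows from your own genericity of $A$, which you only used to get equal thresholds, not on consecutive steps. Suppose step $k$ is a flip and $\lambda_k=\lambda_{k+1}$.
\begin{itemize}
\item Genericity forces $\zeta_{X_{k+1},R_{k+1}}$ to be a positive multiple of $\zeta_{X_k,R_k}$.
\item Since $X$ is $\mathbb{Q}$-factorial, $N^1(X)_{\mathbb{R}}\to N^1(X_{k+1})_{\mathbb{R}}$ is surjective. Hence the minimal curve of $R_{k+1}$ is numerically a positive multiple of $c_k$, the class of the minimal curve of $R_k$.
\item As you noted, the flipped curves on $X_{k+1}$ have class a positive multiple of $-c_k$.
\item So $\overline{\rm NE}(X_{k+1})$ would contain a line, which is impossible for a projective variety.
\end{itemize}
Thus $\lambda_k>\lambda_{k+1}$ at every flip, no $\zeta$ can repeat within a block, and your counting then goes through.
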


\begin{proof}
By the argument in \cite[Proof of Proposition 6.2]{hashizumehu}, there exists a sequence of steps of a $(K_{X}+\Delta)$-MMP with scaling of an ample $\mathbb{R}$-divisor $A$
$$(X,\Delta):=(X_{0},\Delta_{0}) \dashrightarrow (X_{1},\,\Delta_{1}) \dashrightarrow \cdots \dashrightarrow (X_{i},\Delta_{i}) \dashrightarrow \cdots$$
such that if we put
$$\lambda_{i}:={\rm inf}\{ \mu \in \mathbb{R}_{\geq 0}\,|\,\text{$K_{X_{i}}+\Delta_{i}+\mu A_{i}$ is nef}\}$$
for each $i$, then $\lambda_{i-1}>\lambda_{i}$ for every $i\leq k$ as long as $(X,\Delta) \dashrightarrow (X_{k},\Delta_{k})$ contains only flips. 
Here, we use the $\mathbb{Q}$-factoriality of $X$ to apply \cite[Proof of Proposition 6.2]{hashizumehu}. 
If the MMP terminates after at most \(m+1\) steps, there is nothing to prove. 
Otherwise, we show that $(X,\Delta) \dashrightarrow (X_{m+1},\Delta_{m+1})$ contains at least one divisorial contraction, where $m$ is as in Proposition \ref{prop--num-step-mmp}. 
For each $0 \leq i \leq m$, let $R_{i}$ be the $(K_{X_{i}}+\Delta_{i})$-negative extremal ray that defines $(X_{i},\Delta_{i})\dashrightarrow (X_{i+1},\Delta_{i+1})$. 
Then $(K_{X_{i}}+\Delta_{i}+\lambda_{i}A_{i})\cdot R_{i}=0$. 
This shows 
$$\lambda_{i}=-\frac{\zeta_{X_{i},R_{i}}(K_{X}+\Delta)}{\zeta_{X_{i},R_{i}}(A)}.$$
Since $\mathcal{S}$ has at most $m$ elements, there exist indices $j$ and $j'$ such that $\zeta_{X_{j},R_{j}}=\zeta_{X_{j'},R_{j'}}$, and therefore $\lambda_{j}=\lambda_{j'}$. 
If $(X,\Delta) \dashrightarrow (X_{m+1},\Delta_{m+1})$ contains only flips, then the fact $\lambda_{j}=\lambda_{j'}$ contradicts $\lambda_{0}>\lambda_{1}>\cdots >\lambda_{m}$. 
Thus, $(X,\Delta) \dashrightarrow (X_{m+1},\Delta_{m+1})$ contains at least one divisorial contraction. 

Now $\rho(X)>\rho(X_{m+1})$ or the MMP terminates after at most $m+1$ steps. 
We may assume $\rho(X)>\rho(X_{m+1})$.  
We repeat the same argument as in the first paragraph with $(X_{m+1},\Delta_{m+1})$. 
 By the argument in \cite[Proof of Proposition 6.2]{hashizumehu}, there exists a sequence of steps of a $(K_{X_{m+1}}+\Delta_{m+1})$-MMP with scaling of an ample $\mathbb{R}$-divisor $H_{m+1}$
$$(X_{m+1},\Delta_{m+1}) \dashrightarrow (X_{m+2},\,\Delta_{m+2}) \dashrightarrow \cdots \dashrightarrow (X_{i},\Delta_{i}) \dashrightarrow \cdots$$
such that if we put
$$\lambda_{i}={\rm inf}\{ \mu \in \mathbb{R}_{\geq 0}\,|\,\text{$K_{X_{i}}+\Delta_{i}+\mu H_{i}$ is nef}\}$$
for each $i \geq m+1$, then $\lambda_{i}>\lambda_{i+1}$ for every $m+1 \leq i < k$ as long as the sequence  $(X_{m+1},\Delta_{m+1}) \dashrightarrow (X_{k},\Delta_{k})$ contains only flips. 
Let $H$ be an $\mathbb{R}$-divisor on $X$ whose birational transform on $X_{m+1}$ is $H_{m+1}$. 
By the same argument as above, we can show that $(X_{m+1},\Delta_{m+1}) \dashrightarrow (X_{2m+2},\Delta_{2m+2})$ contains at least one divisorial contraction. 

By repeating this discussion, we obtain the desired conclusion. 
\end{proof}

\section{Proofs of main results}\label{sec--prf}

In this section we prove the main results.

\begin{thm}\label{thm--ind-finiteness}
With notations as in Definition \ref{defn--ind-small-family}, for any given projective $\mathbb{Q}$-factorial variety $Y_{0}$, the set $\mathfrak{F}_{Y_{0}}$ is finite up to isomorphisms compatible with the markings.
\end{thm}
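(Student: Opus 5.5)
We will argue by induction on $\dim Y_{0}$. If $\dim Y_{0}\le 1$, every small birational map of normal projective curves is an isomorphism, so $\mathfrak{F}_{Y_{0}}$ consists of one element. For the inductive step we first choose the boundary. Take effective big $\mathbb{Q}$-divisors $L_{1},\dots,L_{n}$ on $Y_{0}$ whose classes span a polytope of dimension $\rho(Y_{0})$; for instance, ample divisors spanning $N^{1}$ plus one extra ample divisor. Put $L_{Y_{0}}:=\sum a_{i}L_{i}$, and fix $\alpha$ as in (\ref{defn--ind-small-family-(*)}). For every $\psi\colon Y_{0}\dashrightarrow Y$ in $\mathfrak{F}_{Y_{0}}$ we set $\Delta_{Y}:=\alpha\psi_{*}L_{Y_{0}}$. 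Then $(Y,\Delta_{Y})$ is a $\mathbb{Q}$-factorial klt pair with big boundary, and every such pair is small birational to $(Y_{0},\Delta_{Y_{0}})$.

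By Theorem \ref{lem--base-finite-intnum-func}, the set $\mathcal{S}$ of extremal-ray functions $\zeta_{Y',R'}\circ\psi_{*}$, taken over all members $Y$ and all $(K_{Y}+\Delta_{Y})$-MMPs, is finite; say it has $m$ elements. Proposition \ref{prop--num-step-mmp} then shows that each $(Y,\Delta_{Y})$ reaches a log minimal model or a Mori fiber space in at most $N:=(m+1)\rho(Y_{0})$ steps. Note that $\rho(Y)=\rho(Y_{0})$ because $\psi$ is small. Each step is marked by an element of $\mathcal{S}$. We argue by contradiction: suppose $\mathfrak{F}_{Y_{0}}$ is infinite. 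Pigeonholing, we find infinitely many members whose chosen MMPs have the same length $k\le N$, the same sequence of extremal-ray functions $\zeta^{(1)},\dots,\zeta^{(k)}$, and the same set of contracted divisors, the latter being a subset of the finite set $\mathcal{Q}_{0}$ from Theorem \ref{lem--base-finite-cont-div}. It then suffices to show that the end models are finite in number up to marked isomorphism. Indeed, we can then pass to an infinite subfamily with a common end model, and run backwards one step at a time. For a divisorial contraction, Lemma \ref{lem--inv-divcont} shows the previous model is unique up to marked isomorphism, since it is determined by the fixed function $\zeta^{(j)}$. For a flip, Lemma \ref{lem--inv-flip} shows there are only finitely many possible previous models. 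After $k$ steps we obtain finitely many starting models $Y$, which is a contradiction. In both lemmas the function $\zeta$ is pulled back to $Y_{0}$ via the markings, which is allowed because the pairs involved are all birational contractions of $(Y_{0},\Delta_{Y_{0}})$.

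It remains to prove that the end models are finite. If the end models are log minimal models, Lemma \ref{lem--finite-log-min-model-1} applied to the family $\{(Y,\Delta_{Y})\}$ gives finiteness directly. If they are Mori fiber spaces $h'\colon(\tilde Y',\Delta_{\tilde Y'})\to T'$, fix one of them, $h_{0}\colon Y_{0}'\to T_{0}$. Any two end models with the same contracted divisors and the same final function are related by a small birational map, by the argument of Lemma \ref{lem--inv-divcont}. Theorem \ref{lem--base-mfs-small} and Corollary \ref{cor--ind-form-small-family} then show that their bases form a family $\mathfrak{G}$ of small birational maps $T_{0}\dashrightarrow T'$, which contains the identity and satisfies (\ref{defn--ind-small-family-(*)}). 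Since $\dim T_{0}<\dim Y_{0}$, the induction hypothesis gives that $\mathfrak{G}$ is finite up to marked isomorphism. Passing to a subfamily, we may assume all $T'$ are identified with a single $T$ compatibly with the markings. The end models $\tilde Y'$ are then small birational to one another over $T$, and each has $-(K+\Delta)$ ample over $T$. By Lemma \ref{lem--inv-mfs} they are all isomorphic, compatibly with the markings.

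The main obstacle will be checking the compatibility conditions needed to apply Theorem \ref{thm--ind-klt-threshold} and Corollary \ref{cor--ind-form-small-family}. These results are stated for MMPs starting at $(Y_{0},\Delta_{Y_{0}})$ with $K_{Y_{0}}+\Delta_{Y_{0}}$ not pseudo-effective, and with the final function matching $\zeta_{Y'_{0},R_{0}}$ up to a positive scalar. Our pigeonholing fixes the final function exactly, so the scalar condition holds with $c=1$. I also expect care to be needed in verifying that the marked isomorphisms of the bases lift to give an isomorphism of the total spaces over a common base.
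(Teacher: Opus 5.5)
Your proposal is correct and follows the paper's proof essentially step for step. The shared steps are finiteness of $\mathcal{S}$ and $\mathcal{Q}_{0}$, finiteness of log minimal end models via Lemma \ref{lem--finite-log-min-model-1}, finiteness of Mori fiber end models from the induction hypothesis on the family of bases plus Lemma \ref{lem--inv-mfs}, and then Proposition \ref{prop--num-step-mmp} with Lemmas \ref{lem--inv-divcont} and \ref{lem--inv-flip} to run backwards; the only difference is that you argue by contradiction and pigeonholing rather than by a direct count of one-step predecessors. The obstacle you flag is handled in the paper by re-basing the family at the starting model $Y^{(0)}$ of the chosen Mori fiber space, which is allowed because (\ref{defn--ind-small-family-(*)}) is preserved under this replacement; this puts ${\rm id}_{T_{0}}\in\mathfrak{G}$, lets Corollary \ref{cor--ind-form-small-family} apply verbatim, and gives non-pseudo-effectivity of $K_{Y^{(0)}}+\Delta_{Y^{(0)}}$ automatically from the existence of that Mori fiber space, while smallness of the maps between end models follows directly from their contracting the same divisors rather than from Lemma \ref{lem--inv-divcont}.
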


\begin{proof}
We prove the theorem by induction on $d:={\rm dim}\, Y_{0}$. 
We may assume $d>0$ because otherwise the theorem is obvious. 
We assume Theorem \ref{thm--ind-finiteness} for given source varieties whose dimension is strictly less than $d$. 
We divide the proof into several steps.

\begin{step1}\label{step1}
In this step we collect some notations used in the rest of the proof. 

Choose an effective big $\mathbb{Q}$-divisor
$L_{Y_{0}}=\sum_{i=1}^{n}a_{i}L_{i}$
on $Y_{0}$ such that each $L_{i}$ is an effective big $\mathbb{Q}$-divisor, $a_{i}\in\mathbb{Q}_{>0}$, and the rational polytope spanned by $L_{1},\ldots,L_{n}$ in $N^{1}(Y_{0})_{\mathbb{R}}$ has dimension $\rho(Y_{0})$. 
Let $\alpha\in\mathbb{Q}_{>0}$ be as in \textup{(\ref{defn--ind-small-family-(*)})} for $L_{Y_{0}}$. 
For every
$$\psi\colon Y_{0}\dashrightarrow Y\in\mathfrak{F}_{Y_{0}},$$
put
$$L_{Y}:=\psi_{*}L_{Y_{0}} \qquad\text{and}\qquad \Delta_{Y}:=\alpha L_{Y}.$$
Then $(Y,2\Delta_{Y})$ is klt, and in particular $(Y,\Delta_{Y})$ is a projective $\mathbb{Q}$-factorial klt pair with big boundary.
By Theorem \ref{lem--base-finite-intnum-func}, the set
\[
\mathcal{S}
:=
\left\{
\zeta_{Y',R'}\circ\psi_{*}
\,\middle|\,
\begin{array}{l}
\psi\colon Y_{0}\dashrightarrow Y\in\mathfrak{F}_{Y_{0}},\\
(Y,\Delta_{Y})\dashrightarrow(Y',\Delta_{Y'})
\text{ is a }(K_{Y}+\Delta_{Y})\text{-MMP},\\
R'\text{ is a }(K_{Y'}+\Delta_{Y'})\text{-negative extremal ray}
\end{array}
\right\}
\]
is finite. 
Let $m$ be the number of elements of $\mathcal{S}$. 
By Theorem \ref{lem--base-finite-cont-div}, there is also a finite set $\mathcal{Q}_{0}$ of prime divisors on $Y_{0}$ containing every prime divisor contracted by
$$Y_{0}\dashrightarrow Y\dashrightarrow Y'$$
for every such MMP.
\end{step1}

\begin{step1}\label{step2}
In this step and the next step, we prove that the possible resulting models are finite. 

Let $\psi\colon Y_{0}\dashrightarrow Y\in\mathfrak{F}_{Y_{0}}$ be an arbitrary element. 
We first consider MMP terminating with log minimal models
$$(Y,\Delta_{Y})\dashrightarrow (\tilde{Y},\tilde{\Delta}).$$
Since every $Y$ is a small birational model of $Y_{0}$ and $\Delta_{Y}=\psi_{*}\Delta_{Y_{0}}$, Lemma \ref{lem--finite-log-min-model-1} shows that there are only finitely many birational contractions from $Y_{0}$ defining such log minimal models, up to isomorphisms compatible with the markings. Thus the log minimal end models, marked by the compositions from $Y_{0}$, form a finite set.
\end{step1}

\begin{step1}\label{step3}
We next consider the case of Mori fiber spaces
$$(Y,\Delta_{Y})
\dashrightarrow
(Y',\Delta_{Y'})
\overset{h}{\longrightarrow}T$$
with defining extremal ray $R$. 
To such a Mori fiber space we associate
$$
\mathcal{P}(Y')
:=
\left\{
P\in\mathcal{Q}_{0}
\mid
P\text{ is contracted by }Y_{0}\dashrightarrow Y'
\right\}
$$
and the function
$$
\zeta_{Y',R}\circ\psi_{*}\in\mathcal{S}.
$$
Since both $\mathcal{Q}_{0}$ and $\mathcal{S}$ are finite sets, we see that there are only finitely many possibilities of 
$
\left(
\mathcal{P}(Y'),
\zeta_{Y',R}\circ\psi_{*}
\right)
$
. 

Fix one nonempty class of marked Mori fiber spaces with the same two data. 
Choose one member
\[
(Y^{(0)},\Delta_{Y^{(0)}})
\dashrightarrow
(Y'_{0},\Delta'_{0})
\overset{h_{0}}{\longrightarrow}T_{0}.
\]
Since the condition \textup{(\ref{defn--ind-small-family-(*)})} in Definition \ref{defn--ind-small-family} is preserved after replacing $Y_{0}$ by $Y^{(0)}$, we replace $Y_{0}$ by $Y^{(0)}$ so that $(Y_{0},\Delta_{Y_{0}})$ has a marked Mori fiber space. 
Let
\[
(Y,\Delta_{Y})
\dashrightarrow
(Y',\Delta_{Y'})
\overset{h'}{\longrightarrow}T'
\]
be another member of the fixed class. 
Since $Y'_{0}$ and $Y'$ contract the same prime divisors, the induced birational map $Y'_{0}\dashrightarrow Y'$ is small. 
Moreover, the extremal-ray functions are positively proportional. 
Then Theorem \ref{lem--base-mfs-small} implies that there exists a small birational map $\theta\colon T_{0}\dashrightarrow T'$ compatible with the Mori fiber space contractions.
Let $\mathfrak{G}_{T_{0}}$ be the set of all such small birational maps $\theta\colon T_{0}\dashrightarrow T'$ in the fixed class. 
The chosen source Mori fiber space implies that ${\rm id}_{T_{0}}\in\mathfrak{G}_{T_{0}}$. 
By Corollary \ref{cor--ind-form-small-family}, we see that
\begin{itemize}
\item
every element of $\mathfrak{G}_{T_{0}}$ is a small birational map to a projective $\mathbb{Q}$-factorial variety, and 
\item
for every effective $\mathbb{R}$-divisor $H_{T_{0}}$ on $T_{0}$, there exists $\beta\in\mathbb{Q}_{>0}$ such that for every $G_{T_{0}}\in|H_{T_{0}}|_{\mathbb{R}}$, the pair $(T',2\beta\theta_{*}G_{T_{0}})$ is klt for every $\theta\colon T_{0}\dashrightarrow T'\in\mathfrak{G}_{T_{0}}$. 
\end{itemize}
Thus $\mathfrak{G}_{T_{0}}$ satisfies the assumptions of Definition \ref{defn--ind-small-family}. 
Since we have $\dim T_{0}<\dim Y_{0},$
the induction hypothesis implies that $\mathfrak{G}_{T_{0}}$ is finite. 

We show the marked Mori fiber spaces in the fixed class are therefore finite. 
Fix one marked base $T'$. 
If
$$(Y_{1},\Delta_{Y_{1}}) \dashrightarrow (Y'_{1},\Delta_{Y'_{1}})\longrightarrow T' \qquad\text{and}\qquad (Y_{2},\Delta_{Y_{2}}) \dashrightarrow(Y'_{2},\Delta_{Y'_{2}})\longrightarrow T'$$
are two marked Mori fiber spaces in the fixed class, then $Y'_{1}\dashrightarrow Y'_{2}$ is small because $Y_{0} \dashrightarrow Y'_{1}$ and $Y_{0} \dashrightarrow Y'_{2}$ contract the same prime divisors. 
Since $Y'_{1}\dashrightarrow Y'_{2}$ is over $T'$ and both $-(K_{Y'_{1}}+\Delta_{Y'_{1}})$ and $-(K_{Y'_{2}}+\Delta_{Y'_{2}})$ are ample over $T'$,  
Lemma \ref{lem--inv-mfs} implies $(Y'_{1},\Delta_{Y'_{1}}) \cong (Y'_{2},\Delta_{Y'_{2}})$ over $T'$. 
Hence a fixed marked base determines at most one Mori fiber space in the fixed class. 
Since there are only finitely many marked bases and only finitely many classes, the set of all marked Mori fiber spaces is finite. 
\end{step1}

Combining this with the finiteness of the log minimal models, we obtain a finite set $\mathcal{E}$ of marked possible end models.

\begin{step1}\label{step4}
In this final step, we prove the finiteness of $\mathfrak{F}_{Y_{0}}$ from $\mathcal{E}$ by using the results in Section \ref{sec--inv-mmp}. 

Let $\mathcal{D}$ be the set of marked models obtained by finite birational steps of MMP from members of $\mathfrak{F}_{Y_{0}}$.
By Proposition \ref{prop--num-step-mmp}, every member of $\mathfrak{F}_{Y_{0}}$ admits an MMP containing at most $(m+1)\rho(Y_{0})$ steps terminating at one of the finitely many end models obtained in Step \ref{step2} and Step \ref{step3}, where $m$ is the number of elements of $\mathcal{S}$ (Step \ref{step1}). 
On the other hand, Theorem \ref{lem--base-finite-intnum-func} shows that only finitely many extremal-ray functions occur on $\mathcal{D}$. 
By Lemma \ref{lem--inv-divcont} and Lemma \ref{lem--inv-flip}, every marked model in $\mathcal{D}$ has only finitely many marked models in $\mathcal{D}$ from which it is obtained by one step of an MMP.
Starting from this finite set of end models and taking one-step predecessors in $\mathcal{D}$ at most $(m+1)\rho(Y_{0})$ times, we obtain a finite set containing every member of $\mathfrak{F}_{Y_{0}}$.
Therefore, $\mathfrak{F}_{Y_{0}}$ is finite up to isomorphisms compatible with the markings.
This completes the induction.
\end{step1}

We finish the proof. 
\end{proof}

\begin{proof}[Proof of Theorem \ref{thm--full-termination}]
Let 
$$(X,\Delta) \dashrightarrow (X_{1},\Delta_{1}) \dashrightarrow \cdots \dashrightarrow (X_{i},\Delta_{i}) \dashrightarrow \cdots$$
be a sequence of steps of a $(K_{X}+\Delta)$-MMP. 
To terminate this sequence, by taking a lift of the MMP via a small $\mathbb{Q}$-factorialization, we may assume that $(X,\Delta)$ is $\mathbb{Q}$-factorial. 
If $K_{X}+\Delta$ is big, then we may write $K_{X}+\Delta \sim_{\mathbb{R}}A+G$ with an effective ample $\mathbb{R}$-divisor $A$ and an effective $\mathbb{R}$-divisor $G$ on $X$. 
By replacing $\Delta$ with $\Delta + \epsilon(A+G)$ for sufficiently small $\epsilon \in \mathbb{R}_{>0}$, we may assume that $\Delta$ is big. 
If this is an infinite sequence, then there exists $i_{0}$ such that the sequence
$$ (X_{i_{0}},\Delta_{i_{0}}) \dashrightarrow \cdots \dashrightarrow (X_{i},\Delta_{i}) \dashrightarrow \cdots$$
 only contains flips. 
We consider the set 
$$\mathfrak{F}:=\{X_{i_{0}} \dashrightarrow X_{i}\, |\, i \geq i_{0}\}.$$
For any effective $\mathbb{R}$-divisor $L_{i_{0}}$ on $X_{i_{0}}$, take a common resolution $f \colon W \to X$ and $f_{i_{0}} \colon W \to X_{i_{0}}$ of $X \dashrightarrow X_{i_{0}}$, and put $L:=f_{*}f^{*}_{i_{0}}L_{i_{0}}$.  
By \cite[Theorem 1.8]{birkar-bab}, there exists $\alpha \in \mathbb{Q}_{>0}$ such that for any $E \in |L|_{\mathbb{R}}$, the pair $(X,\Delta+2\alpha E)$ is klt. 
Moreover, by Corollary \ref{cor--mmp-boundary-polytope} and replacing $\alpha$ if necessary, we may assume that any $(K_{X}+\Delta)$-MMP is also a $(K_{X}+\Delta+2\alpha E)$-MMP for every $E \in |L|_{\mathbb{R}}$. 
Then $(X_{i},\Delta_{i}+2\alpha E_{i})$ is klt for any $E_{i} \in |L_{i}|_{\mathbb{R}}$, where $L_{i}$ is the birational transform of $L_{i_{0}}$ on $X_{i}$. 
Then we can apply Theorem \ref{thm--ind-finiteness} to $\mathfrak{F}$, and we get a contradiction because $X_{i'} \dashrightarrow X_{i''}$ is not an isomorphism for any $i' \neq i''$. 
So we are done. 
\end{proof}

\begin{proof}[Proof of Theorem \ref{thm--fin-mfs}]
Suppose for contradiction that there are infinitely many marked Mori fiber spaces 
$$(X,\Delta) \dashrightarrow (X'_{i},\Delta'_{i}) \qquad (i=1,\,2,\,\cdots)$$
obtained by running $(K_X+\Delta)$-MMP. 
By Theorem \ref{thm--finite-div-cont-mmp}, after passing to a subset, we may assume that all the MMP $(X,\Delta) \dashrightarrow (X'_{i},\Delta'_{i})$ contract the same divisors. 
We consider the set
$$\mathfrak{F}:=\{X'_{1} \dashrightarrow X'_{i}\, |\, i \geq 1\}.$$
For any effective $\mathbb{R}$-divisor $L'_{1}$ on $X'_{1}$, take a common resolution $f \colon W \to X$ and  $f' \colon W \to X'_{1}$ of $X\dashrightarrow X'_{1}$, and put $L:=f_{*}f'^{*}L'_{1}$. 
By \cite[Theorem 1.8]{birkar-bab}, there exists $\alpha \in \mathbb{Q}_{>0}$ such that for any $E \in |L|_{\mathbb{R}}$, the pair $(X,\Delta+2\alpha E)$ is klt. 
Moreover, by Corollary \ref{cor--mmp-boundary-polytope} and replacing $\alpha$ if necessary, we may assume that any sequence of steps of a $(K_{X}+\Delta)$-MMP is also a $(K_{X}+\Delta+2\alpha E)$-MMP for every $E \in |L|_{\mathbb{R}}$. 
Then $(X'_{i},\Delta'_{i}+2\alpha E'_{i})$ is klt for any $E'_{i} \in |L'_{i}|_{\mathbb{R}}$, where $L'_{i}$ is the birational transform of $L'_{1}$ on $X'_{i}$. 
Then we can apply Theorem \ref{thm--ind-finiteness} to $\mathfrak{F}$, and we get a contradiction. 
So we are done. 
\end{proof}

\section{Finiteness of Mori fiber spaces versus full termination}\label{sec--exam}

In this section, we discuss a relation between the finiteness of Mori fiber spaces and the full termination of MMP for klt pairs. 

\begin{idea}
Here, we describe the idea behind this section. 

Let $(X,\Delta)$ be a projective $\mathbb{Q}$-factorial klt pair and $Y :=X \times \mathbb{P}^{1}$ with the projections $\pi \colon Y \to X$ and $p \colon Y \to \mathbb{P}^{1}$. 
Fix a point $u \in \mathbb{P}^{1}$, and put $S:=p^{*}u$ and $\Gamma:=\frac{1}{2}S+\pi^{*}\Delta$. 
Then it follows that $(Y,\Gamma)$ is a projective $\mathbb{Q}$-factorial klt pair and $\pi \colon (Y,\Gamma) \to X$ is a Mori fiber space. 

Let $(X,\Delta) \dashrightarrow (X',\Delta')$ be a finite sequence of steps of a $(K_{X}+\Delta)$-MMP. 
Then we can construct a diagram
$$
\xymatrix{
(Y,\Gamma)\ar@{-->}[rr]\ar[d]_{\pi}&&(Y',\Gamma')\ar[d]^{\pi'}\\
(X,\Delta)\ar@{-->}[rr]&&(X',\Delta')
}
$$
such that $(Y, \Gamma) \dashrightarrow (Y',\Gamma')$ is a finite sequence of steps of a $(K_{Y}+\Gamma)$-MMP and both $\pi \colon (Y,\Gamma) \to X$ and $\pi' \colon (Y',\Gamma') \to X'$ are Mori fiber spaces. 
In particular, if there exists an infinite sequence of steps of a $(K_{X}+\Delta)$-MMP 
$$(X,\Delta) \dashrightarrow (X_{1},\Delta_{1}) \dashrightarrow \cdots \dashrightarrow (X_{i},\Delta_{i}) \dashrightarrow \cdots,$$
then we can obtain infinitely many marked Mori fiber spaces $(Y, \Gamma) \dashrightarrow (Y_{i},\Gamma_{i}) \to X_{i}.$
Thus, the finiteness of marked Mori fiber spaces for all projective $\mathbb{Q}$-factorial klt pairs of dimension $d$ is a stronger statement than the full termination of MMP for all projective $\mathbb{Q}$-factorial klt pairs of dimension $d-1$.  
\end{idea}

\begin{lem}\label{lem--MMP-product-P^1}
Let $(X,\Delta)$ be a projective $\mathbb{Q}$-factorial klt pair, and let 
$$
\xymatrix{
(X,\Delta)\ar@{-->}[rr]%^-{\phi}
\ar[dr]_{f}&&(X',\Delta')\ar[dl]^{f'}\\
&V
}
$$
be a step of a $(K_{X}+\Delta)$-MMP. 
We put $Y:=X \times \mathbb{P}^{1}$, $Y':=X' \times \mathbb{P}^{1}$, and $W:=V \times \mathbb{P}^{1}$. 
Fix a point $u \in \mathbb{P}^{1}$, and let $S$, $S'$, and $T$ be the sections of $Y \to X$, $Y' \to X'$, and $W \to V$ defined by $u$, respectively. 
In other words, we set $S:=X \times \{u\}$, $S':=X' \times \{u\}$, and $T:= V \times \{u\}$. 
We define an $\mathbb{R}$-divisor $\Gamma$ on $Y$ by $\Gamma:= \frac{1}{2}S+\Delta \times \mathbb{P}^{1}$, where $\Delta \times \mathbb{P}^{1}$ is the pullback of $\Delta$ to $Y$. 
Similarly, we define an $\mathbb{R}$-divisor $\Gamma'$ on $Y'$ by 
$\Gamma':= \frac{1}{2}S'+\Delta' \times \mathbb{P}^{1}$. 
Then the induced diagram
 $$
\xymatrix{
(Y,\Gamma)\ar@{-->}[rr]%^-{\psi}
\ar[dr]_{g}&&(Y',\Gamma')\ar[dl]^{g'}\\
&W
}
$$
is a step of a $(K_{Y}+\Gamma)$-MMP and the morphism $(Y',\Gamma') \to X'$ is a Mori fiber space. 
\end{lem}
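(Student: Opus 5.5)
We check the four conditions in Definition \ref{defn--mmp-fullgeneral} for the product diagram, and then verify the Mori fiber space structure on $Y'\to X'$. Everything is compatible with the product structure. Write $\mathrm{pr}_{1}$ for the projections to the first factor and $q\colon Y\to\mathbb{P}^{1}$ for the second projection. Then
$$K_{Y}+\Gamma=\mathrm{pr}_{1}^{*}(K_{X}+\Delta)+q^{*}K_{\mathbb{P}^{1}}+\tfrac12 q^{*}u,$$
so $K_{Y}+\Gamma\sim_{\mathbb{R}}\mathrm{pr}_{1}^{*}(K_{X}+\Delta)+q^{*}\mathcal{O}(-\tfrac32)$. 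The same formula holds on $Y'$.

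\emph{Birational-step conditions.} The maps $g=f\times\mathrm{id}$ and $g'=f'\times\mathrm{id}$ are birational, and $g'$ is small because $f'$ is. The relative cone $\overline{\rm NE}(Y/W)$ is generated by curves in fibers of $g$, which lie in slices $X\times\{t\}$ contracted by $f$. Since $N^{1}(Y)=N^{1}(X)\oplus N^{1}(\mathbb{P}^{1})$, we get $N^{1}(Y/W)\cong N^{1}(X/V)$, and hence $\rho(Y/W)=\rho(X/V)=1$. Over $W$, the divisor $K_{Y}+\Gamma$ is $\mathbb{R}$-linearly equivalent to $\mathrm{pr}_{1}^{*}(K_{X}+\Delta)$ plus a pullback from $\mathbb{P}^{1}$, and the latter is a pullback from $W$. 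Pullbacks of relatively ample divisors along base change by $\mathbb{P}^{1}$ remain relatively ample (ampleness over $V\times\mathbb{P}^1$ of $A\boxtimes\mathcal{O}$ for $A$ ample over $V$). Thus $-(K_{Y}+\Gamma)$ is $g$-ample, and $K_{Y'}+\Gamma'$ is $\mathbb{R}$-Cartier and $g'$-ample. Finally, the birational transform of $\Gamma$ is $\Gamma'$: $S$ maps to $S'$ because $S$ is not contracted (it dominates $X$), and $\Delta\times\mathbb{P}^{1}$ maps to $\Delta'\times\mathbb{P}^{1}$. We should also record that $(Y,\Gamma)$ is klt: $Y$ is $\mathbb{Q}$-factorial, since products of a $\mathbb{Q}$-factorial variety with $\mathbb{P}^1$ are $\mathbb{Q}$-factorial (the class group is $\mathrm{Cl}(X)\oplus\mathbb{Z}$), and klt-ness follows from the klt-ness of $(X,\Delta)\times(\mathbb{P}^{1},\tfrac12 u)$. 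The same argument applies to $Y'$, which is needed later.

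\emph{Mori fiber space.} The map $\mathrm{pr}_{1}\colon Y'\to X'$ is a contraction with $\dim X'<\dim Y'$ and $\rho(Y'/X')=1$. For a fiber $\ell\cong\mathbb{P}^{1}$, we have $(K_{Y'}+\Gamma')\cdot\ell=-2+\tfrac12<0$, so $-(K_{Y'}+\Gamma')$ is relatively ample. Here $Y'$ is $\mathbb{Q}$-factorial and klt as above. The discrepancy conditions in the definition of a Mori fiber space are relative to a starting pair; for the lemma as stated, we only need the structural properties listed, together with the discrepancy inequalities inherited along a step of the MMP, which follow from the negativity lemma as usual.

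The main obstacle is the rigorous identification of $\overline{\rm NE}(Y/W)$ and of $\rho(Y/W)$; this is where the product decomposition of $N^{1}$ is used. We will handle it by noting that every $g$-contracted curve has constant $\mathbb{P}^{1}$-coordinate, so its class lies in $\mathrm{pr}_{1}$-pushforward-compatible classes coming from $X\times\{t\}$, which gives an isomorphism $N_{1}(Y/W)\cong N_{1}(X/V)$.
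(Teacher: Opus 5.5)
Your proposal is correct and follows essentially the same route as the paper: you use $K_{Y}+\Gamma\sim_{\mathbb{R},W}\mathrm{pr}_{1}^{*}(K_{X}+\Delta)$ together with base change for relative (anti-)ampleness, $\mathbb{Q}$-factoriality via the class group of a product with $\mathbb{P}^{1}$, and $\rho(Y'/X')=1$ with $(K_{Y'}+\Gamma')\cdot\ell=-\tfrac32$, which the paper writes as $K_{Y'}+\Gamma'\sim_{\mathbb{R},X'}-\tfrac32 S'$. The only minor variation is that you get $\rho(Y/W)=1$ by identifying $N_{1}(Y/W)\cong N_{1}(X/V)$ through $g$-contracted curves lying in slices $X\times\{t\}$, whereas the paper uses $\rho(Y/V)=2$ and $1+1\le\rho(W/V)+\rho(Y/W)\le\rho(Y/V)$; you also state the klt-ness of $(Y,\Gamma)$ explicitly, which the paper leaves implicit.
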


\begin{proof}
Let $p\colon W \to \mathbb{P}^{1}$ be the projection. 
Note that $p \circ g\colon Y \to \mathbb{P}^{1}$ and $p \circ g' \colon Y' \to \mathbb{P}^{1}$ are projections. 
Let $q_{Y} \colon Y \to X$, $q_{Y'} \colon Y' \to X'$, and $q_{W} \colon W \to V$ be projections. 

We first show that $Y$ is $\mathbb{Q}$-factorial. 
Fix an arbitrary Weil divisor $D$ on $Y$. 
We apply \cite[III, Exercise 12.5]{hartshorne} after restricting $D$ to the smooth locus of $Y$. 
Then we can find $\alpha \in \mathbb{Z}$ and a Weil divisor $D_{X}$ on $X$ such that $D \sim \alpha S +q_{Y}^{-1}(D_{X})$. 
Since $X$ is $\mathbb{Q}$-factorial, we have $q_{Y}^{-1}(D_{X})=q_{Y}^{*}D_{X}$, which is $\mathbb{Q}$-Cartier. 
Hence, $D$ is also $\mathbb{Q}$-Cartier, and therefore $Y$ is $\mathbb{Q}$-factorial.

It is clear that $g \colon Y \to W$ is a birational morphism and $g' \colon Y' \to W$ is a small birational morphism. 
Since $S=g^{*}T$, we have
\begin{equation*}
\begin{split}
K_{Y}+\Gamma&=q_{Y}^{*}K_{X}+(p \circ g)^{*}K_{\mathbb{P}^{1}}+\frac{1}{2}S+q_{Y}^{*}\Delta \\
&=q_{Y}^{*}(K_{X}+\Delta)+g^{*}\left(p ^{*}K_{\mathbb{P}^{1}}+\frac{1}{2}T\right)\sim_{\mathbb{R},\,W}q_{Y}^{*}(K_{X}+\Delta). 
\end{split}
\end{equation*}
Because $-(K_{X}+\Delta)$ is ample over $V$ and we can regard $g \colon Y \to W$ as the base change of $X \to V$ by $W \to V$, we see that $-(K_{Y}+\Gamma)$ is ample over $W$. 
Similarly, we have 
\begin{equation*}
\begin{split}
K_{Y'}+\Gamma'&=q_{Y'}^{*}K_{X'}+(p \circ g')^{*}K_{\mathbb{P}^{1}}+\frac{1}{2}S'+q_{Y'}^{*}\Delta' \\
&=q_{Y'}^{*}(K_{X'}+\Delta')+g'^{*}\left(p ^{*}K_{\mathbb{P}^{1}}+\frac{1}{2}T\right) \sim_{\mathbb{R},\,W}q_{Y'}^{*}(K_{X'}+\Delta'). 
\end{split}
\end{equation*}
Because $K_{X'}+\Delta'$ is ample over $V$ and we can regard $g' \colon Y' \to W$ as the base change of $X' \to V$ by $W \to V$, we see that $K_{Y'}+\Gamma'$ is ample over $W$. 

It follows that $\rho(Y/V)=2$ since $\rho(X/V)=1$ and $Y = X \times \mathbb{P}^{1}$.   
Then 
$$1+1 \leq \rho(W /V)+\rho(Y/W) \leq \rho(Y/V)=2,$$
and therefore $\rho(Y/W)=1$. 
From these arguments, we see that the diagram
$$
\xymatrix{
(Y,\Gamma)\ar@{-->}[rr]%^-{\psi}
\ar[dr]_{g}&&(Y',\Gamma')\ar[dl]^{g'}\\
&W
}
$$
is a step of a $(K_{Y}+\Gamma)$-MMP. 
Finally, we have $\rho(Y'/X')=1$ since $Y'=X' \times \mathbb{P}^{1}$, and 
$$K_{Y'}+\Gamma'=q_{Y'}^{*}(K_{X'}+\Delta')+g'^{*}\left(p ^{*}K_{\mathbb{P}^{1}}+\frac{1}{2}T\right)\sim_{\mathbb{R},\,X'}-\frac{3}{2}S'.$$
Therefore, $-(K_{Y'}+\Gamma')$ is ample over $X'$, from which we see that $q_{Y'} \colon (Y',\Gamma') \to X'$ is a Mori fiber space. 
Note that $Y'$ is $\mathbb{Q}$-factorial since $Y$ is $\mathbb{Q}$-factorial.  
\end{proof}

\begin{thm}\label{thm--mfsnumber-mmpnumber}
Let $(X,\Delta)$ be a projective $\mathbb{Q}$-factorial klt pair and $Y :=X \times \mathbb{P}^{1}$. 
We put $S:=X \times \{u\}$ for a point $u \in \mathbb{P}^{1}$, and we put $\Gamma:=\frac{1}{2}S+\Delta \times \mathbb{P}^{1}$. 
Then the finiteness of marked Mori fiber spaces of $(Y,\Gamma)$ implies the full termination of the $(K_{X}+\Delta)$-MMP. 
\end{thm}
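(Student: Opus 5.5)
We argue by contraposition. Suppose there is an infinite sequence of steps of a $(K_{X}+\Delta)$-MMP
$$(X,\Delta)=(X_{0},\Delta_{0}) \dashrightarrow (X_{1},\Delta_{1}) \dashrightarrow \cdots \dashrightarrow (X_{i},\Delta_{i}) \dashrightarrow \cdots.$$
We will produce infinitely many pairwise non-isomorphic marked Mori fiber spaces of $(Y,\Gamma)$, which contradicts the assumed finiteness.

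\textbf{Step 1: lift the MMP.} Put $Y_{i}:=X_{i}\times\mathbb{P}^{1}$, $S_{i}:=X_{i}\times\{u\}$ and $\Gamma_{i}:=\frac{1}{2}S_{i}+\Delta_{i}\times\mathbb{P}^{1}$. Each $X_{i}$ is $\mathbb{Q}$-factorial klt, since this property is preserved by MMP steps. Hence Lemma \ref{lem--MMP-product-P^1} applies to every step $X_{i}\dashrightarrow X_{i+1}$. It shows that $(Y_{i},\Gamma_{i})\dashrightarrow(Y_{i+1},\Gamma_{i+1})$ is a step of a $(K_{Y_{i}}+\Gamma_{i})$-MMP. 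Composing these, $(Y,\Gamma)\dashrightarrow(Y_{i},\Gamma_{i})$ is a finite sequence of steps of a $(K_{Y}+\Gamma)$-MMP. Moreover $q_{Y_{i}}\colon(Y_{i},\Gamma_{i})\to X_{i}$ is a Mori fiber space: the lemma gives this for the target of each step, and for $i=0$ the same computation works. So, with the markings $\tau_{i}\colon Y\dashrightarrow Y_{i}$, we get marked Mori fiber spaces of $(Y,\Gamma)$. Such an end model satisfies the discrepancy inequalities in the definition of a Mori fiber space, by the usual negativity argument for MMP steps. In any case, it is a marked Mori fiber space ``arising from'' the MMP, which is exactly the set whose finiteness is assumed.

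\textbf{Step 2: the marked models are pairwise non-isomorphic.} Suppose for $i<j$ there is an isomorphism $\psi\colon Y_{i}\to Y_{j}$ with $\tau_{j}=\psi\circ\tau_{i}$. Then $\psi=\tau_{j}\circ\tau_{i}^{-1}=(\phi_{ij}\times{\rm id}_{\mathbb{P}^{1}})$ as birational maps, where $\phi_{ij}\colon X_{i}\dashrightarrow X_{j}$ is the composite of MMP steps. We want to conclude that $\phi_{ij}$ is an isomorphism, and this is the main point to check. Restricting $\psi$ to the section $S_{i}\cong X_{i}$ gives $\psi|_{S_{i}}\colon S_{i}\to S_{j}$, an isomorphism agreeing with $\phi_{ij}$ on a dense open set, because $\psi$ is the product map. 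So $\phi_{ij}$ extends to an isomorphism $X_{i}\to X_{j}$.

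This is impossible. A nonempty sequence of MMP steps is never an isomorphism: for instance $a(P,X_{i},\Delta_{i})\le a(P,X_{j},\Delta_{j})$ for all divisors $P$, with strict inequality for some $P$ (a divisor contracted in step $i$, or one whose centre lies in the flipping locus), whereas an isomorphism preserves all discrepancies. Alternatively, for a flip, $K+\Delta$ is anti-ample on the flipping curves over $V$ in $X_{i}$, while its push-forward is ample over $V$ on $X_{i+1}$; then apply the negativity lemma.

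Hence $\{(Y_{i},\Gamma_{i})\to X_{i}\}_{i\ge0}$ is an infinite set of marked Mori fiber spaces, pairwise non-isomorphic with respect to the markings. This contradicts the hypothesis, so every $(K_{X}+\Delta)$-MMP terminates. The only delicate point is the strictness of discrepancies, which shows that a finite composite of MMP steps is not an isomorphism compatible with the markings. That is standard and follows from the negativity lemma.
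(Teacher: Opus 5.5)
Your proposal is correct and follows essentially the same route as the paper: you lift each step via Lemma \ref{lem--MMP-product-P^1} to obtain marked Mori fiber spaces $(Y_i,\Gamma_i)\to X_i$, restrict an isomorphism compatible with the markings to the sections $S_i\cong X_i$ to get an isomorphism of pairs $(X_i,\Delta_i)\cong(X_j,\Delta_j)$, and conclude using the strict increase of discrepancies along MMP steps. You also spell out why the restricted map sends $S_i$ into $S_j$ and agrees with $\phi_{ij}$, which the paper leaves implicit.
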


\begin{proof}
Suppose that the finiteness of marked Mori fiber spaces of $(Y,\Gamma)$ holds. 
If there exists an infinite sequence of steps of a $(K_{X}+\Delta)$-MMP
$$(X,\Delta) \dashrightarrow (X_{1},\Delta_{1}) \dashrightarrow \cdots \dashrightarrow (X_{i},\Delta_{i}) \dashrightarrow \cdots,$$
then Lemma \ref{lem--MMP-product-P^1} implies the existence of a $(K_{Y}+\Gamma)$-MMP terminating with a Mori fiber space
$$(Y,\Gamma) \dashrightarrow (Y_{1},\Gamma_{1}) \dashrightarrow \cdots \dashrightarrow (Y_{i},\Gamma_{i}) \to X_{i}$$
for every $i$. 
By the finiteness of marked Mori fiber spaces of $(Y,\Gamma)$, there exist $i<j$ such that the induced birational map $\psi_{ij}\colon Y_{i}\dashrightarrow Y_{j}$ is an isomorphism. 
Then 
$$X_{i} \cong S_{i} \overset{\psi_{ij}}{\longrightarrow} S_{j} \cong X_{j}$$
induces an isomorphism between $(X_{i},\Delta_{i})$ and $(X_{j},\Delta_{j})$ as klt pairs, where $S_{i}$ and $S_{j}$ are the birational transforms of $S$ on $Y_{i}$ and $Y_{j}$, respectively. 
Then we get a contradiction since steps of any $(K_{X}+\Delta)$-MMP strictly increase discrepancies of some prime divisors over $X$.  
Hence, Theorem \ref{thm--mfsnumber-mmpnumber} holds. 
\end{proof}

%%%%%%%%%%%%%%%

\end{document}